\newcommand{\crossmark}{\scalebox{0.85}{\usym{2613}}}
\newtheorem{remark}{Remark}
\newcommand{\CL}{{\sf CL}}
\newcommand{\IL}{{\sf IL}}
\newcommand{\CLL}{{\sf CLL}}
\newcommand{\ILL}{{\sf ILL}}
\newcommand{\ILZ}{{\sf ILL}_\bot}
\newcommand{\PRO}{{\sf PRO}}
\newcommand{\DNE}{{\sf DNE}}
\newcommand{\CLb}{{\sf CL}_{\rm b}}
\newcommand{\ILb}{{\sf IL}_{\rm b}}
\newcommand{\CLLb}{{\sf CLL}_{\rm b}}
\newcommand{\cwedge}{\otimes}
\newcommand{\cvee}{\operatorname{\bindnasrepma}}
\newcommand{\awedge}{\,\&\,}
\newcommand{\bang}[1]{! #1}
\newcommand{\whynot}{?}
\newcommand{\lto}{\multimap}
\newcommand{\lequiv}{\leftrightsquigarrow}
\newcommand{\Id}{{\rm Id}}
\newcommand{\Transform}{{\cal T}}
\newcommand{\IT}[2]{\Transform^{#1}_{#2}}
\newcommand{\ITTr}[1]{\IT{{\rm Tr}}{#1}}
\newcommand{\Trans}[2]{{#1}^{#2}}
\newcommand{\genericTrans}[1]{\Trans{#1}{{\rm Tr}}}
\newcommand{\genericTransPrime}[1]{\Trans{#1}{{\rm Tr}'}}
\newcommand{\TIL}[1]{{#1}^{\dagger}}  
\newcommand{\TCLL}[1]{{#1}^{\ddagger}}  
\newcommand{\Forget}[1]{{#1}^{\mathcal{F}}}
\newcommand{\Goedel}{\textup{G}}
\newcommand{\lGoedel}{\textup{lG}}
\newcommand{\Kuroda}{\textup{Ku}}
\newcommand{\lKuroda}{\textup{lKu}}
\newcommand{\Kolm}[1]{{\textup K}_{#1}}
\newcommand{\Tr}[2]{{#1}_{#2}}
\newcommand{\genericTr}[1]{\Tr{#1}{{\rm Tr}}}
\newcommand{\ruleid}{\text{id}}
\newcommand{\rulecut}{\text{cut}}
\newcommand{\rulecon}{\text{con}}
\newcommand{\rulewkn}{\text{wkn}}
\newcommand{\pdefin}{:\equiv}
\newcommand{\eqleft}[1]{\begin{itemize} \item[] $#1$ \end{itemize}}
\newcommand{\squareOp}{\operatorname{\Box}}
\journal{Annals of Pure and Applied Logic}
\newtheorem{defn}{Definition}
\newtheorem{lem}{Lemma}
\newtheorem{prop}{Proposition}
\newtheorem{thm}{Theorem}
\newtheorem{obs}{Observation}
\begin{document}

\begin{frontmatter}



\title{On the Various Translations between Classical, Intuitionistic and Linear Logic}


 \cortext[cor1]{Corresponding author}
 
\author[label3,label4]{Gilda Ferreira\fnref{label5}\corref{cor1}}
\ead{gmferreira@fc.ul.pt}
\fntext[label5]{The author acknowledges the support of Fundação para a Ciência e a Tecnologia under the projects [UID/04561/2025 and UID/00408/2025] and is also grateful to LASIGE - Computer Science and Engineering Research Centre (Universidade de Lisboa).}
\affiliation[label3]{organization={DCeT},
	addressline={Universidade Aberta}, 
	postcode={1269-001}, 
	state={Lisboa},
	country={Portugal}}

\affiliation[label4]{organization={Center for Mathematical Studies, University of Lisbon (CEMS.UL)},
	addressline={Universidade de Lisboa}, 
	postcode={1749-016}, 
	state={Lisboa},
	country={Portugal}}

\author[label6]{Paulo Oliva\fnref{}}
\ead{p.oliva@qmul.ac.uk}
\affiliation[label6]{
Organization
	addressline={School of Electronic Engineering and Computer Science,}, 
	         city={Queen Mary University of London},
	         postcode={London E1 4NS}, 
	country={United Kingdom}
}
\author[label1]{Clarence Lewis Protin\fnref{label2}}
\ead{cprotin@sapo.pt}
\affiliation[label1]{
       organization={Centro de Filosofia},
           addressline={Faculdade de Letras da Universidade de Lisboa}, 
            country={Portugal}
}
\fntext[label2]{This work is financed by national funds through FCT – Fundação para a Ciência e a Tecnologia, I.P., within the scope of the project REF with the identifier DOI 10.54499/UIDB/00310/2020.}
        

\begin{abstract}
Several different proof translations exist between classical and intuitionistic logic (negative translations), and intuitionistic and linear logic (Girard translations). Our aims in this paper are (1) to consider extensions of intuitionistic linear logic which correspond to each of these systems, and (2) with this common logical basis, to develop a uniform approach to devising and simplifying proof translations. As we shall see, through this process of ``simplification'' we obtain most of the well-known translations in the literature.
\end{abstract}




\begin{keyword} Intuitionistic linear logic, classical linear logic, negative translations, Gödel–Gentzen translation, Kuroda translation, Girard translations, embeddings into linear logic.
\MSC  03F52 \sep 03B20 \sep 03F07\sep 03F25
\end{keyword}

\end{frontmatter}




\section{Introduction}
\label{sec-introduction}

\emph{Classical logic}, the most widely known and studied form of mathematical reasoning, is based on the paradigm of \emph{truth}. During the 20th century some mathematicians started focusing on \emph{justification} rather than truth. \emph{Intuitionistic logic} \cite{heyting,mints} captures this emphasis on justification, dealing with the constructive nature of proofs. The intuitionistic connectives, instead of being seen as truth functions  propagating the logical values of \emph{true} and \emph{false}, are seen as generating new proofs from previously constructed proofs.

In the Gentzen sequent calculus \cite{gen1,gentzen}, intuitionistic logic can be obtained from classical logic by restricting the number of formulas on the right side of sequents to at most one formula. In this way, one can view intuitionistic reasoning as a restricted form or classical reasoning.

Linear logic \cite{GirTCS,lec} can also be seen as trying to formalise ``constructivism'', by imposing explicit control over the uses of structural rules such as weakening and contraction. One could say that instead of focusing on truth or proofs, linear logic places its emphasis on the proof \emph{resources}, and the manner in which these resources are ``used'' during an inference.


In the literature we find several translations between
classical and intuitionistic logic, and various translations involving linear logic. The translations from classical to intuitionistic logic are often called \emph{negative translations} or \emph{double-negation translations}, the most well-known being translations due to Glivenko \cite{sur}, G\"odel and Gentzen \cite{gen1, god}, Kuroda \cite{kur}, and Krivine \cite{kri}. For comprehensive surveys on negative tranlsations see \cite{ol}, \cite[Chapter 6]{lecture}, \cite[Chapter 2.3]{cons} and \cite{basic}. For further developments and applications of negative translations in different contexts see \cite{Dosen, Litak} (modal logic), \cite{Far} (substructural logics), \cite{berger, kohlenbach} (extraction of computational content from classical proofs), \cite{Ecumenic} (ecumenical systems), and \cite{boudard,lau2} (polarization).

The translations involving linear logic are often called \emph{Girard translations}. They can be classified into three categories: (1) translations of classical linear logic into intuitionistic linear logic, which can be seen as substructural variants of the negative translations \cite{CCP03, Far,kan,Lam95,Lau18,MT10}, (2) translations of intuitionistic logic into linear logic \cite{DJS95b, CFMM16, lec, Sch91}, and (3) translations of classical logic into classical linear logic \cite{cosmo, DJS95a,DJS97}.

The first contribution of the present paper is an analysis and comparison of these different logical systems via \emph{Intuitionistic Linear Logic} $\ILL$. Reformulating Classical Logic $\CL$, Intuitionistic Logic
$\IL$ and Classical Linear Logic $\CLL$ in this common framework allows us to pinpoint precisely the differences between these systems and provide natural ways of looking for translations between them. 

Our second contribution builds on the work of the first two authors \cite{ol, ol2} on the various \emph{negative translations}, and how the G\"{o}del-Gentzen \cite{gen1,god} and Kuroda \cite{kur} translations can be obtained as ``\emph{simplifications}'' of the Kolmogorov \cite{kol} translation. The work in \cite{ol, ol2} is based on inner and outer simplifications from Kolmogorov's translation. However, that approach did not distinguish between inner and outer presentations and was centred on single negations. In the present work, we refine this perspective by explicitly considering inner and outer presentations, and now we treat $\neg \neg$ as a single transformation. Additionally, we extend the new framework to also include the $A \mapsto \; !A$ transform, which will allow us study translations from $\IL$ into $\ILL$ (just sketched in \cite{ol2} in the previous framework) as well as translations from $\CLL$ to $\ILL$, and from $\CL$ to $\CLL$ (viewed as compositions of $\CL$ to $\IL$, and $\IL$ to $\ILL$ translations). 





\subsection{$\ILL$ as a common base system}

\begin{table}[t]
	\[
	\begin{array}{|cccc|}
		\hline
		& & & \\
		\multicolumn{4}{|c|}{
			\begin{array}{ccc}
                \begin{prooftree} 
                    \justifies 
                    A \vdash A 
                    \using(\ruleid) 
                \end{prooftree}
                & \quad\quad &
                \begin{prooftree} 
                    \Gamma \vdash A \quad \Delta, A \vdash C
        			\justifies \Gamma, \Delta \vdash C 
                    \using (\rulecut)
        		\end{prooftree} \\[5mm]
            \end{array}
        } \\[5mm]
		\multicolumn{4}{|c|}{
			\begin{array}{cccc}
                \begin{prooftree}
                    \Gamma \vdash C 
                    \justifies 
                    \Gamma, 1 \vdash C 
                    \using (1\textup{L})
                \end{prooftree}  
                \quad
                & 
                \begin{prooftree}
                    \justifies \vdash 1 
                    \using (1\textup{R})
                \end{prooftree} 
                \quad
                &
                \begin{prooftree}
                    \justifies 
                    \Gamma \vdash \top
                    \using (\top\textup{R})
                \end{prooftree} 
                \quad
                &
        		\begin{prooftree} 
                    \justifies 
                    \Gamma, 0 \vdash C  
                    \using (0\textup{L})
                \end{prooftree} 
            \end{array}
        } \\[7mm]
		\hline
		& & & \\
		\multicolumn{4}{|c|}{
			\begin{array}{ccc}
                \begin{prooftree} 
                    \Gamma \vdash A \quad \Delta \vdash B
        			\justifies 
                    \Gamma,\Delta \vdash A \cwedge B 
                    \using(\cwedge\textup{R}) 
                \end{prooftree} 
                & \quad\quad &
        		\begin{prooftree} 
                    \Gamma, A, B \vdash C 
                    \justifies \Gamma, A \cwedge B \vdash C 
                    \using (\cwedge\textup{L}) 
                \end{prooftree} \\[7mm]
                \begin{prooftree} 
                    \Gamma, A \vdash B 
                    \justifies \Gamma \vdash A \lto B 
                    \using (\lto\textup{R}) 
                \end{prooftree} & &
        		\begin{prooftree} 
                    \Gamma \vdash A \quad \Delta, B \vdash C 
                    \justifies 
                    \Gamma, \Delta, A \lto B \vdash C 
                    \using (\lto\textup{L}) 
                \end{prooftree} \\[7mm]
            \end{array}
        } \\[5mm]
		\multicolumn{4}{|c|}{ \quad
			\begin{array}{ccccc}
				\begin{prooftree} \Gamma \vdash A \quad \Gamma \vdash B\justifies  \Gamma \vdash A \awedge B \using (\awedge\textup{R}) \end{prooftree}
				& \quad &
				\begin{prooftree} \Gamma, A \vdash C\justifies  \Gamma, A\awedge  B \vdash C \using (\awedge\textup{L}) \end{prooftree}
				& \quad &
				\begin{prooftree} \Gamma, B \vdash C \justifies \Gamma, A \awedge B \vdash C \using (\awedge\textup{L}) \end{prooftree} \\[7mm]
				\begin{prooftree} \Gamma \vdash A\justifies  \Gamma\vdash A \oplus B \using (\oplus\textup{R}) \end{prooftree}
				& &
				\begin{prooftree} \Gamma \vdash B\justifies  \Gamma\vdash A \oplus B \using (\oplus\textup{R}) \end{prooftree}
				& &
				\begin{prooftree} \Gamma, A \vdash C \quad \Gamma, B \vdash C\justifies  \Gamma, A \oplus B \vdash C \using (\oplus\textup{L})  \quad\end{prooftree} \\[7mm]
			\end{array}
		} \\
		\hline
	\end{array}
	\]
	\caption{Intuitionistic Linear Logic $\ILL$ (constants and connectives)} \label{ill}
\end{table}

\begin{table}[t]
	\[
	\begin{array}{|rccc|}
		\hline
		\hspace{15mm} & & & \\
		& \begin{prooftree} \Gamma \vdash A \justifies \Gamma \vdash \forall x A \using (\forall \textup{R}) \end{prooftree}
		& &
		\begin{prooftree} \Gamma, A[t/x] \vdash C \justifies \Gamma, \forall x A \vdash C \using (\forall \textup{L}) \end{prooftree} \\[5mm]
		& \begin{prooftree} \Gamma \vdash A[t/x] \justifies \Gamma \vdash \exists x A \using (\exists \textup{R}) \end{prooftree}
		& &
		\begin{prooftree} \Gamma, A \vdash C \justifies \Gamma, \exists x A \vdash C \using (\exists \textup{L}) \end{prooftree} \\[5mm]
		\hline
		& & & \\
		\multicolumn{4}{|l|}{
			\quad
			\begin{prooftree} \Gamma, \bang A, \bang A \vdash C \justifies \Gamma, \bang A \vdash C \using (\rulecon) \end{prooftree}
			\quad \quad\quad
			\begin{prooftree} \Gamma \vdash C \justifies \Gamma, \bang A \vdash C \using (\rulewkn) \end{prooftree}
			\quad \quad
			\begin{prooftree} \bang \Gamma \vdash A \justifies \bang \Gamma \vdash \bang A \using (\bang\textup{R}) \end{prooftree}
			\quad \quad
			\begin{prooftree} \Gamma, A \vdash C \justifies  \Gamma, \bang A \vdash C \using (\bang\textup{L}) \quad \end{prooftree}
		} \\
		& & & \\
		\hline
	\end{array}
	\]
	\caption{Intuitionistic Linear Logic (quantifiers and modality)} \label{ill-quantifiers}
\end{table}

Tables \ref{ill} and \ref{ill-quantifiers} describe Intuitionistic Linear Logic ($\ILL$), which we will take as the base system in our study. In $\ILL$ we have three logical constants $\top$, $0$, $1$, four logical connectives $\cwedge$, $\awedge$, $\oplus$, $\lto$, the universal and existential quantifiers $\forall$, $\exists$, and the exponential (or modality) $\bang A$. Sequents are of the form $\Gamma \vdash A$, where we have exactly one formula $A$ on the right-hand side, and the context $\Gamma$ is a multi-set of formulas\footnote{In all rules where a formula $A$ is removed from the context (e.g. $\lto$\textup{R}), only one occurrence of $A$ is removed.}. 

In \cite[page 20]{lec}, Troestra describes an extension of intuitionistic linear logic $\ILL$ with one more logical constant (which we will denote by $\bot$, but Troestra calls $0$) and the following axiom and rule:
\[
\begin{array}{lccr}
	\begin{prooftree}
        \justifies 
        \bot \vdash
        \using (\bot\textup{L})
    \end{prooftree} 
    &~ &~\hspace{2cm} &
    \begin{prooftree} 
        \Gamma \vdash 
        \justifies 
        \Gamma\vdash \bot
        \using (\bot\textup{R})
	\end{prooftree}
\end{array}
\]
We call this extension of intuitionistic linear logic $\ILZ$. In $\ILZ$ 
sequents are either of the form $\Gamma \vdash A$ or $\Gamma \vdash \,$ (i.e. they have at most one formula on the right), and the context $\Gamma$ is a multi-set of formulas. \\[-2mm]

\noindent {\bf Notation}. We will write $A \lequiv B$ as an abbreviation for $(A \lto B) \awedge (B \lto A)$. Negation $\neg A$ is an abbreviation for $A \lto \bot$, and will only be used when considering extensions of $\ILZ$. \\[-2mm]

Our study will take place solely in the language of $\ILL$ or $\ILZ$. Since the languages of Intuitionistic Logic $\IL$, Classical Logic $\CL$, and Classical Linear Logic $\CLL$ are different from that of $\ILL$, we will be working with extensions of $\ILL$ and $\ILZ$ into which we will be able to translate $\IL, \CL$ and $\CLL$. These extensions will include combinations of the following two axiom schemas:
\begin{itemize}
	\item[$(\PRO)$] $A \vdash \, \bang A$
	\item[$(\DNE)$] $\neg \neg A \vdash A$.
\end{itemize}
In this paper, an \emph{extension of $\ILL$ or $\ILZ$} is simply a logic obtained by adding new axioms -- but within the same language. In order to deal with logics that have a language different from $\ILL$, for instance $\CL$ or $\IL$, we will consider translations that map the other language into the language of $\ILL$.

\subsection{Intuitionistic Logic $\IL$ as $\ILL + \PRO$}

\newcommand{\Lang}[1]{{\mathcal L}(#1)}

Let us write $\Lang{\IL}$ and $\Lang{\ILL}$, for the languages of $\IL$ and $\ILL$ respectively. Consider the following translation of formulas of $\IL$ into formulas of $\ILL$:

\begin{defn}[Translating $\Lang{\IL}$ into $\Lang{\ILL}$] \label{dagger-trans} For each formula $A$ of $\Lang{\IL}$ define a formula $\TIL{A}$, in the language of $\ILL$, as follows:
\[
\begin{array}{rclrcl}
    \TIL{(A \wedge B)} & \pdefin &  \TIL{A} \awedge \TIL{B} &
    \TIL{P} & \pdefin & P, \textup{~for $P$ atomic except $\bot$} \\[1mm] \TIL{(A \vee B)} & \pdefin &  \TIL{A} \oplus \TIL{B} \quad & \TIL{\bot} & \pdefin & 0     \\[1mm]  \TIL{(A \rightarrow B)} & \pdefin &  \TIL{A} \lto \TIL{B}  \quad & \TIL{(\forall x A)} & \pdefin & \forall x \TIL{A} \\[1mm]
     &  &  &
    \TIL{(\exists x A)} & \pdefin &  \exists x \TIL{A}.
\end{array}
\]
\end{defn}

It is easy to see that this translation of formulas extends to a translation of proofs, from proofs in $\IL$ to proofs in $\ILL + \PRO$. First, we need a simple lemma:

\begin{lem} \label{lemma-IL} Let $\ILb := \ILL + \PRO$. The following are provable in $\ILb$:
	\begin{itemize}
		\item[$(i)$] $A \lequiv \, \bang A$
		\item[$(ii)$] $A\awedge B \lequiv A\cwedge B$
		\item[$(iii)$] $\top \lequiv 1$.
	\end{itemize}
\end{lem}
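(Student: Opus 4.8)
The plan is to prove each of the three equivalences by exhibiting derivations in both directions within $\ILb = \ILL + \PRO$. Recall that $A \lequiv B$ unfolds to $(A \lto B) \awedge (B \lto A)$, so for each item I must supply two sequents $A \vdash B$ and $B \vdash A$ (after which an application of $(\lto\textup{R})$ on each and then $(\awedge\textup{R})$ assembles the equivalence). The single genuinely new ingredient beyond pure $\ILL$ is the axiom schema $(\PRO)$, namely $A \vdash \bang A$; everything else is bookkeeping with the standard rules from Tables~\ref{ill} and \ref{ill-quantifiers}.

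For part $(i)$, the direction $A \vdash \bang A$ \emph{is} the axiom $(\PRO)$ itself, so nothing is required there. The reverse $\bang A \vdash A$ follows by starting from the identity $A \vdash A$ (rule $(\ruleid)$) and applying dereliction $(\bang\textup{L})$, which turns the hypothesis $A$ into $\bang A$. This is the one place where $(\PRO)$ is essential: in plain $\ILL$ one has $\bang A \vdash A$ but not $A \vdash \bang A$, and $(\PRO)$ is exactly what repairs the missing direction.

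For part $(ii)$, I would prove $A \awedge B \vdash A \cwedge B$ and $A \cwedge B \vdash A \awedge B$. For the first, the additive conjunction $A \awedge B$ must be split into two linear copies before a $(\cwedge\textup{R})$ can be applied, which is impossible directly in $\ILL$; so I route through $(\PRO)$/part $(i)$. Concretely, using $A \awedge B \vdash \bang(A \awedge B)$ (an instance of $\PRO$) together with $(\rulecon)$ to duplicate the banged hypothesis, then $(\bang\textup{L})$ and the two $(\awedge\textup{L})$ projections to extract $A$ from one copy and $B$ from the other, and finally $(\cwedge\textup{R})$ to recombine them multiplicatively. The reverse, $A \cwedge B \vdash A \awedge B$, is obtained by $(\cwedge\textup{L})$ to place $A, B$ in the context, then $(\awedge\textup{R})$; each premise of $(\awedge\textup{R})$ requires $A, B \vdash A$ and $A, B \vdash B$, which need a weakening to discard the unused hypothesis, again licensed via $(\PRO)$ and $(\rulewkn)$. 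Part $(iii)$ is the degenerate, zero-premise instance of the same phenomenon: $\top \vdash 1$ uses $\top \vdash \bang\top$ followed by $(\rulewkn)$ and $(1\textup{R})$, while $1 \vdash \top$ is immediate from $(\top\textup{R})$ with $(1\textup{L})$.

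The \textbf{main obstacle} — really the conceptual crux rather than a technical hurdle — is recognising that $(\PRO)$ is precisely the device that restores the structural rules of weakening and contraction for \emph{arbitrary} formulas, not just banged ones: once $A \vdash \bang A$ is available, every formula behaves as though it were under a $\bang$, so the additive and multiplicative conjunctions (and $\top$ and $1$) collapse onto one another. Each individual derivation is short, but one must be careful to apply $(\PRO)$ before invoking $(\rulecon)$ or $(\rulewkn)$, since those structural rules in $\ILL$ fire only on $\bang$-prefixed formulas.
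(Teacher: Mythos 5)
Your proof is correct, and for items $(i)$ and $(iii)$ it coincides with the paper's: $(i)$ is $(\PRO)$ in one direction and dereliction $(\bang\textup{L})$ applied to the identity in the other, and $(iii)$ uses the instance $1 \vdash \top$ of $(\top\textup{R})$ together with $\vdash 1$, weakening on $\bang \top$, and $(\PRO)$. Where you genuinely diverge is item $(ii)$. The paper proves $A \awedge B \lequiv A \cwedge B$ in one line by invoking the standard $\ILL$ equivalence $\bang(A \awedge B) \lequiv \, \bang A \cwedge \bang B$ and then erasing all bangs via item $(i)$; you instead build the two sequents from scratch: for $A \awedge B \vdash A \cwedge B$ you promote the hypothesis with $(\PRO)$, duplicate it with $(\rulecon)$, extract $A$ and $B$ by $(\bang\textup{L})$ and the two $(\awedge\textup{L})$ projections, and finish with $(\cwedge\textup{R})$; for the converse you use $(\cwedge\textup{L})$, $(\awedge\textup{R})$, and discard the unused hypothesis by routing it through $(\PRO)$ and $(\rulewkn)$ (plus a cut). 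Both arguments are sound. Yours is more self-contained and makes visible exactly which structural manipulations $(\PRO)$ licenses -- indeed your closing observation, that $(\PRO)$ restores weakening and contraction for arbitrary formulas, is precisely the conceptual content of the lemma; the paper's route is shorter and exhibits $(ii)$ as a corollary of $(i)$ given the familiar exponential law, at the cost of citing that law rather than deriving it.
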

\begin{proof} $(i)$ In $\ILL$ we have $\bang A \vdash A$, a consequence of $(\bang\textup{L})$. $\PRO$ gives us the converse $A \vdash \, \bang A$. $(ii)$ Since we have $\bang (A \awedge B) \lequiv \; \bang A \cwedge \bang B$ the result follows from $(i)$. $(iii)$ One possible instance for the axiom schema $(\top\textup{R})$ is $1 \vdash \top$. On the other hand, we have $\vdash 1$ $(1\textup{R})$ and hence $\bang\top \vdash 1$, by weakening. By $(i)$, we get $\top \vdash 1$. \end{proof}

Then it follows that $\IL$ can be seen as an extension of $\ILL$ with the promotion axiom $\PRO$. 

\begin{prop}[$\IL$ as an extension of $\ILL$] \label{prop-IL} $\vdash_{\IL} A$ iff $\vdash_{\ILb} \TIL{A}$.
\end{prop}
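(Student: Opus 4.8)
The statement is a soundness-and-faithfulness result for the translation $\TIL{\cdot}$, so the plan is to prove the two implications separately, each by induction on derivations, exploiting that $\IL$ is formulated as a single-conclusion sequent calculus matching the single-formula right-hand side of $\ILL$. For the left-to-right (soundness) direction I would prove the more general claim that $\Gamma \vdash_{\IL} A$ implies $\TIL{\Gamma} \vdash_{\ILb} \TIL{A}$, where $\TIL{\Gamma}$ is the pointwise translation of the context, by induction on a derivation of $\Gamma \vdash_{\IL} A$; specializing to empty $\Gamma$ yields the proposition. For each logical rule of $\IL$, its $\TIL{}$-image is almost literally an $\ILL$ rule, since $\TIL{}$ sends $\wedge,\vee,\to$ to $\awedge,\oplus,\lto$, whose additive left/right rules match those of $\IL$, while the quantifier and $\bot,\top$ cases are immediate. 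The real work is to recover the structural rules of $\IL$ (weakening and contraction on arbitrary formulas) and to handle any $\IL$ rule that shares or merges contexts where $\ILL$ would split them.

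The key tool here is Lemma \ref{lemma-IL}$(i)$: in $\ILb$ every formula satisfies $A \lequiv \bang A$, so the rules $(\rulecon)$ and $(\rulewkn)$, which in $\ILL$ apply only to $\bang$-formulas, become available for all formulas. Concretely, to contract $A$ I pass to $\bang A$ via $\PRO$, contract with $(\rulecon)$, and return via $(\bang\textup{L})$; weakening is analogous. This lets me duplicate and discard context freely, so that whenever an $\IL$ rule shares a context across two premises (or introduces $A\wedge B$ from $\Gamma, A, B$) it can be reproduced in $\ILb$, using Lemma \ref{lemma-IL}$(ii),(iii)$ to pass between the additive and multiplicative presentations of conjunction and of the unit as needed.

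For the right-to-left (faithfulness) direction I would use the forgetful map $\Forget{\cdot}$ from $\Lang{\ILL}$ back to $\Lang{\IL}$ erasing the resource structure: it sends both $\awedge$ and $\cwedge$ to $\wedge$, $\oplus$ to $\vee$, $\lto$ to $\to$, $0$ to $\bot$, both $1$ and $\top$ to $\top$, drops every modality (so $\Forget{\bang A} := \Forget{A}$), commutes with the quantifiers, and fixes atoms. Two facts then suffice. First, $\Forget{\TIL{A}} = A$ holds syntactically, by a trivial induction on $A$, since $\Forget{}$ inverts $\TIL{}$ connective by connective. Second, $\Forget{}$ transports provability, $\Gamma \vdash_{\ILb} C$ implies $\Forget{\Gamma} \vdash_{\IL} \Forget{C}$, proved by induction on the $\ILb$-derivation: each $\ILL$ rule forgets to the corresponding $\IL$ rule, the $\bang$-rules and $(\rulecon),(\rulewkn)$ collapse to trivialities or to ordinary $\IL$ structural rules, the multiplicative rules become their additive $\IL$ counterparts (licensed by $\IL$'s weakening and contraction), and crucially the axiom $\PRO$, $A \vdash \bang A$, forgets to the identity $\Forget{A} \vdash \Forget{A}$. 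Combining the two facts: if $\vdash_{\ILb} \TIL{A}$ then $\vdash_{\IL} \Forget{\TIL{A}} = A$.

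The main obstacle lies entirely in the soundness direction, namely the bookkeeping needed to recover the unrestricted structural behaviour of $\IL$ inside the resource-sensitive $\ILL$; once Lemma \ref{lemma-IL} is in hand this reduces to a routine rule-by-rule simulation. The faithfulness direction is conceptually clean, the only care being to define $\Forget{}$ on \emph{every} $\ILL$ connective (not merely those in the image of $\TIL{}$, since an $\ILb$-proof may use $\cwedge$, $1$ and $\bang$ in intermediate formulas) and to verify that the $\PRO$ axiom is preserved.
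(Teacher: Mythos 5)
Your proposal is correct, and your right-to-left direction is essentially the paper's own argument: the same forgetful map $\Forget{(\cdot)}$, the same two facts ($\Forget{(\TIL{A})} \equiv A$ syntactically, and that every rule and axiom of $\ILL + \PRO$ forgets to something derivable in $\IL$, with $\PRO$ collapsing to an identity axiom). The left-to-right direction, however, is where you genuinely diverge. The paper does not do a rule-by-rule induction over $\IL$ derivations; it instead invokes Girard's embedding of $\IL$ into $\ILL$ as a known result from \cite{GirTCS}, and then uses Lemma \ref{lemma-IL}$(i)$ ($A \lequiv\, \bang A$ in $\ILb$) only to erase the bangs, i.e.\ to conclude that Girard's translation of $A$ is $\ILb$-equivalent to $\TIL{A}$, so provability of the former in $\ILL$ gives provability of the latter in $\ILb$. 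You instead prove the generalized statement $\Gamma \vdash_{\IL} A \Rightarrow \TIL{\Gamma} \vdash_{\ILb} \TIL{A}$ directly by induction, using $\PRO$ to manufacture contraction and weakening on arbitrary formulas so that $\IL$'s context-sharing rules (cut, $\to$L, $\wedge$R) can be simulated despite $\ILL$'s context-splitting ones. Both routes are sound. Yours is self-contained and pinpoints exactly where $\PRO$ is needed (recovering the structural rules), at the cost of the bookkeeping you acknowledge; the paper's is shorter, reuses a result that is in any case central to the rest of the paper (the Girard translations of Section \ref{sec-il-to-ill}), but leaves implicit the small induction showing that bang-erasure of Girard's translation yields $\TIL{A}$ up to $\ILb$-equivalence — an induction that is in spirit a piece of yours. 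One phrasing slip worth fixing: in your simulation of contraction the roles of the two rules are transposed — one first applies $(\bang\textup{L})$ to turn the antecedent occurrences of $A$ into $\bang A$, then contracts with $(\rulecon)$, and finally cuts against the $\PRO$ axiom $A \vdash\, \bang A$ to return to $A$; this is cosmetic and does not affect the argument.
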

\begin{proof} From left to right, we can use Girard's translation (see \cite{GirTCS}) of $\IL$ into $\ILL$, together with the observation of Lemma \ref{lemma-IL} $(i)$ that in $\ILL + \PRO$ we have $A\lequiv ~\bang A$, so all the $!$'s in Girard's translation can be omitted. The implication from right to left can be obtained by noting that the (forgetful) inverse $\Forget{(\cdot)} \colon \ILL \mapsto \IL$ of $\TIL{(\cdot)} \colon \IL \mapsto \ILL$, which, for example, maps $0$ to $\bot$, $\bang A$ to $A$, $A \lto B$ back to $A \to B$, and both additive and multiplicative conjunctions to the $\IL$-conjunction $\wedge $, makes all the axioms and rules of $\ILL + \PRO$ derivable in $\IL$, and $\Forget{(\TIL{A})} \equiv A$.
\end{proof}

\begin{remark} The $\IL$ derivation of the formula $\Forget{A}$ (defined in the proof above) is what has been called (see \cite{DJS95a}) the \emph{skeleton} of the original $\ILL$ derivation of $A$. As observed in \cite{DJS95b}, ``the skeleton of a derivation that has been obtained from the Girard translation of an $\IL$ derivation, will not always be the $\IL$ derivation that we started with'', and they study optimal proof-by-proof embeddings of $\IL$ into $\ILL$. In our present paper, however, we will be focusing on the formula translations, and how these can be simplified, without worrying about the optimality of the corresponding proof translations -- which we hope to investigate in a future work.
\end{remark}

\subsection{Classical Linear Logic $\CLL$ as $\ILZ + \DNE$}

As in the previous section, we can also translate the language of $\CLL$ into the language of $\ILZ$ as follows:

\begin{defn}[Translating $\Lang{\CLL}$ into $\Lang{\ILZ}$] \label{ddagger-trans} The translation $A \mapsto \TCLL{A}$ is defined inductively as follows:
\[
\begin{array}{rclrcl}
   
    \TCLL{(A \otimes B)} & \pdefin &  \TCLL{A} \otimes \TCLL{B}    &\TCLL{P} & \pdefin & P \quad (\textup{$P$ atomic})\\[1mm]
    \TCLL{(A \awedge B)} & \pdefin &  \TCLL{A} \awedge \TCLL{B} \quad &     \TCLL{(\forall x A)} & \pdefin &  \forall x \TCLL{A}\\[1mm]
    \TCLL{(A \oplus B)} & \pdefin &  \TCLL{A} \oplus \TCLL{B} \quad & \TCLL{(\exists x A)} & \pdefin &  \exists x \TCLL{A}\\[1mm]
     \TCLL{(A \lto B)} & \pdefin & \TCLL{A} \lto \TCLL{B}  \quad & \TCLL{(\bang A)} & \pdefin &   \bang  \TCLL{A} \\[1mm]
     
      \TCLL{(A \cvee B)} & \pdefin &  \neg (\neg \TCLL{A} \otimes \neg \TCLL{B})     \quad & \TCLL{(\whynot A)} & \pdefin &  \neg \bang \neg \TCLL{A}\\[1mm]
\end{array}
\]
\end{defn}


As in the case of Intuitionistic Logic (Proposition \ref{prop-IL}), it is easy to check that the above translation of formulas can be lifted to a translation of $\CLL$ proofs into $\ILZ + \DNE$ proofs.

\begin{prop}[$\CLL$ as an extension of $\ILZ$] \label{prop-CLL} Let $\CLLb := \ILZ + \DNE$. Then $\vdash_{\CLL} A$ iff $\vdash_{\CLLb} \TCLL{A}$.
\end{prop}
\begin{proof} From left-to-right the result follows by induction on the derivation of $A$ in $\CLL$. For the converse, note that $\CLL$ proves the equivalences $\neg (\whynot A) \lequiv~\bang \neg A$ and $\neg (A \cvee B) \lequiv \neg A \otimes \neg B$.
\end{proof}


\subsection{Classical Logic $\CL$ as $\ILZ + \PRO + \DNE$ (or $\ILL + \PRO + \DNE$)}

We can view classical logic $\CL$ as $\ILZ + \PRO + \DNE$ or $\ILL + \PRO + \DNE$, since in the presence of $\PRO$ and $\DNE$ we can prove $0 \lequiv \bot$. In this way, we can say that Intuitionistic and Classical Logic share the same language, and we can make use of the same translation $\TIL{(\cdot)}$ -- cf. Definition \ref{dagger-trans} -- to translate $\CL$ proofs into $\ILZ + \PRO + \DNE$ proofs, taking $\TIL{\bot} = \bot$. We then have the equivalent of Proposition \ref{prop-IL} for Classical Logic:

\begin{prop}[$\CL$ as an extension of $\ILL$] \label{CL} Let $\CLb := \ILZ + \PRO + \DNE$. Then $\vdash_{\CL} A$ iff $\vdash_{\CLb} \TIL{A}$.
\end{prop}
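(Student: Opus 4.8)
The plan is to obtain Proposition \ref{CL} by mirroring the proof of Proposition \ref{prop-IL} and adding the treatment of the single extra axiom. Since $\CL$ and $\IL$ share both their language and the formula translation $\TIL{(\cdot)}$, and since $\CLb = \ILb + \DNE$ while $\CL$ is exactly $\IL$ augmented with the double-negation-elimination scheme $\neg\neg B \to B$, the two directions of the statement reduce to the corresponding directions of Proposition \ref{prop-IL} together with a check on how $\TIL{(\cdot)}$ and the forgetful map $\Forget{(\cdot)}$ act on double negation. The key compatibility I would isolate first is that, because $\TIL{\bot} \equiv 0$ and $\TIL{(\cdot)}$ commutes with $\to$, one has $\TIL{(\neg B)} \equiv \neg \TIL{B}$ (recall $\neg C$ abbreviates $C \lto 0$ in $\ILL$), and dually, because $\Forget{0} \equiv \bot$ and $\Forget{(\cdot)}$ sends $\lto$ to $\to$, one has $\Forget{(\neg C)} \equiv \neg \Forget{C}$.

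For the left-to-right direction I would argue by induction on a $\CL$-derivation of $A$, exactly as the proof translation underlying Proposition \ref{prop-IL}. All rules and axioms already present in $\IL$ are handled verbatim, their translations being derivable in $\ILb \subseteq \CLb$. The only new case is the classical axiom: using the compatibility above, $\TIL{(\neg\neg B \to B)} \equiv \neg\neg\TIL{B} \lto \TIL{B}$, and this is derivable in $\CLb$ by applying the rule $(\lto\textup{R})$ to the $\DNE$ axiom $\neg\neg\TIL{B} \vdash \TIL{B}$. Hence each classical step translates into a $\CLb$-derivable formula, and the proof translation lifts.

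For the converse I would extend the forgetful argument of Proposition \ref{prop-IL}. That argument already shows $\Forget{(\cdot)}$ sends every axiom and rule of $\ILb$ to something derivable in $\IL$ and satisfies $\Forget{(\TIL{A})} \equiv A$; it remains only to treat $\DNE$. By the dual compatibility above, $\Forget{(\cdot)}$ maps the axiom $\neg\neg C \vdash C$ to $\neg\neg\Forget{C} \vdash \Forget{C}$, which is precisely double-negation elimination in $\CL$. Thus every axiom and rule of $\CLb$ becomes derivable in $\CL$ under $\Forget{(\cdot)}$, so $\vdash_{\CLb} \TIL{A}$ yields $\vdash_{\CL} \Forget{(\TIL{A})} \equiv A$. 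The only genuine content, and the step I would treat most carefully, is the pair of negation compatibilities that make the classical double-negation axiom land exactly on the linear $\DNE$ axiom and conversely; everything else is inherited from Proposition \ref{prop-IL}. I would also explicitly confirm that the intended axiomatization of $\CL$ is $\IL + \DNE$ (if instead Peirce's law or excluded middle were used, one would check that their translations are $\CLb$-derivable, which follows once $\neg\neg\TIL{B} \lto \TIL{B}$ is available), and note that no subtlety from the two conjunctions collapsing intervenes here, since the negation fragment involves only $\lto$ and $0$.
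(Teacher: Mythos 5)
Your proof is correct and follows essentially the same route as the paper's: both rest on the decomposition $\CL = \IL + (\neg\neg B \to B)$ together with the observation that $\TIL{(\cdot)}$ sends this axiom scheme exactly to $\DNE$ (and that $\Forget{(\cdot)}$ sends $\DNE$ back to classical double-negation elimination), inheriting everything else from Proposition \ref{prop-IL}. The paper compresses this into a one-line appeal to Proposition \ref{prop-IL}, whereas you unfold the two inductions (proof translation forward, forgetful map backward) that this appeal implicitly relies on -- a more explicit rendering of the same argument.
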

\begin{proof} It is well known that Classical Logic $\CL$ can be formalized as Intuitionistic Logic $\IL$ plus double negation elimination $\neg \neg B \to B$, where $\neg B$ here stands for the standard intuitionistic negation. Thus, $\vdash_{\CL} A $ iff $\vdash_{\IL + (\neg \neg B \rightarrow B)} A$. Since, $\TIL{(\neg \neg B \to B)} = \DNE$, by Proposition \ref{prop-IL} we get  $\vdash_{\CL} A$
iff $\vdash_{\ILZ + \PRO + \DNE} \TIL{A}$.
\end{proof}

\subsection{On some equivalences}

From now on we will work with $\ILL$ and $\ILZ$ and the following extensions 
\begin{align*}
    \ILb & := \ILL + \PRO \\
    \IL_\bot & := \ILZ + \PRO \\
    \CLLb & := \ILZ + \DNE \\
    \CLb & := \ILZ + \PRO + \DNE
\end{align*}
As we have seen, $\ILb, \CLLb$ and $\CLb$ are the projections of Intuitionistic Logic $\IL$, Classical Linear Logic $\CLL$, and Classical Logic $\CL$ in the language of $\ILL$ or $\ILZ$. This means that we are working entirely in the language of $\ILL$ or $\ILZ$, and when we speak of the validity or provability of an $\ILL$-formula $A$ in $\ILb, \CLLb$ or $\CLb$, we mean the validity or provability of $A$ in the corresponding $\ILL$-extension.

Our analysis of the various proof translations will rely on the validity or failure of certain equivalences. Let us start with the equivalences which are valid in $\CLb$, but hold or fail in $\IL_\bot$ and $\ILZ$.

\begin{prop} \label{not-not-equivalences} The following $\CLLb$ (and hence also $\CLb$) equivalences hold / fail in $\IL_\bot$ and $\ILZ$:
\[
\begin{array}{lrclcc}
    & & & & \quad \IL_\bot \quad & \ILZ \\[1mm]
    \hline
    (i) & \neg \neg (\neg \neg A \cwedge \neg \neg B) 
    & \lequiv &
    \neg \neg (A \cwedge B) 
        & \checkmark & \checkmark \\[1mm]
    (ii) & \neg \neg (\neg \neg A \awedge \neg \neg B) 
       & \lequiv &
      \neg \neg (A \awedge B) 
      & \checkmark & \crossmark \\[1mm]
    (iii) & \neg \neg (\neg \neg A \oplus \neg \neg B) 
    & \lequiv &
    \neg \neg (A \oplus B)  
      & \checkmark & \checkmark \\[1mm]
    (iv) & \neg \neg (\neg \neg A \lto \neg \neg B) 
    & \lequiv &
    \neg \neg (A \lto B)  
      & \checkmark & \crossmark \\[1mm]
    (v) & \neg \neg (\neg \neg A \lto \neg \neg B) 
    & \lequiv &
    \neg \neg (A \lto \neg \neg B)  
      & \checkmark & \checkmark \\[1mm]
    (vi) & \neg \neg \forall x \neg \neg A 
    & \lequiv &
    \neg \neg \forall x A 
      & \crossmark & \crossmark \\[1mm]
    (vii) & \neg \neg \exists x \neg \neg A 
    & \lequiv &
    \neg \neg \exists x A 
      & \checkmark & \checkmark \\[1mm]
    (viii) & \neg \neg \bang \neg \neg A 
        & \lequiv &
        \neg \neg \bang A 
      & \checkmark & \crossmark \\[1mm]
    \hline
    (ix) & \neg \neg (\neg \neg A \cwedge \neg \neg B) 
    & \lequiv &
    \neg \neg A \cwedge \neg \neg B 
        & \checkmark & \crossmark \\[1mm]
    (x) & \neg \neg (\neg \neg A \awedge \neg \neg B) 
    & \lequiv &
    \neg \neg A \awedge \neg \neg B 
        & \checkmark & \checkmark \\[1mm]
    (xi) & \neg \neg (\neg \neg A \oplus \neg \neg B) 
    & \lequiv &
    \neg \neg A \oplus \neg \neg B 
        & \crossmark & \crossmark \\[1mm]
    (xii) & \neg \neg (\neg \neg A \lto \neg \neg B) 
    & \lequiv &
    \neg \neg A \lto \neg \neg B 
        & \checkmark & \checkmark \\[1mm]
    (xiii) & \neg \neg \forall x \neg \neg A 
    & \lequiv &
    \forall x \neg \neg A 
        & \checkmark & \checkmark \\[1mm]
    (xiv) & \neg \neg \exists x \neg \neg A 
    & \lequiv &
    \exists x \neg \neg A 
        & \crossmark & \crossmark \\[1mm]
    (xv) & \neg \neg \bang \neg \neg A 
    & \lequiv &
    \bang \neg \neg A 
        & \checkmark & \crossmark \\[1mm]
\end{array}
\]
\end{prop}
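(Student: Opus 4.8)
The plan is to organise the fifteen rows around three facts: that $\DNE$ trivialises double negation, that $\neg\neg$ is a \emph{monoidal monad} already on $\ILL$, and that since $\ILb$ extends $\ILL$, validity propagates upward and failure downward, so that each $\crossmark$ in the $\ILb$ column automatically yields the corresponding $\crossmark$ in the $\ILL$ column, and each $\checkmark$ in the $\ILL$ column automatically yields the $\checkmark$ in the $\ILb$ column. The $\CLLb$ (hence $\CLb$) claims are immediate: $\DNE$ together with the always-available unit $A \vdash \neg\neg A$ gives $\neg\neg C \lequiv C$ for every $C$, so every $\neg\neg$ may be deleted and the two sides of each row collapse to the same formula. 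For the other two columns I would first record the facts provable in $\ILL$ alone: the unit $A \vdash \neg\neg A$; the triple collapse $\neg\neg\neg A \lequiv \neg A$ (hence $\neg\neg\neg\neg A \lequiv \neg\neg A$); functoriality, that $A \vdash B$ implies $\neg\neg A \vdash \neg\neg B$; and the monoidal law $\neg\neg A \cwedge \neg\neg B \vdash \neg\neg(A \cwedge B)$. Call $C$ \emph{stable} when $\neg\neg C \vdash C$; by triple collapse every $\neg D$ (in particular every $\neg\neg A$) is stable, and a few short derivations show stability is preserved by $\awedge$, by $\forall$, and by forming $B \lto (\cdot)$ in the codomain.

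These lemmas settle every $\ILL$-checkmark directly, and hence (upward) the matching $\ILb$-checkmark. Concretely, (x), (xii) and (xiii) are exactly the three preservation statements (stability of $\neg\neg A \awedge \neg\neg B$, of $\neg\neg A \lto \neg\neg B$, and of $\forall x\,\neg\neg A$); (i) is the monoidal law followed by applying $\neg\neg$ and using $\neg\neg\neg\neg C \lequiv \neg\neg C$; (iii) and (vii) follow from $\neg\neg A \oplus \neg\neg B \vdash \neg\neg(A\oplus B)$ and $\exists x\,\neg\neg A \vdash \neg\neg\exists x A$ via the $\oplus\textup{L}$/$\exists\textup{L}$ rules and functoriality; and (v) is the stability argument giving $\neg\neg A \lto \neg\neg B \lequiv A \lto \neg\neg B$. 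In each row the remaining (easy) inclusion is the monotone one coming from the unit, or from functoriality in the contravariant slot of $\lto$.

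For the whole $\ILb$ column I would argue uniformly. By Lemma \ref{lemma-IL} one may replace $\cwedge$ by $\awedge$ and delete every occurrence of $\bang{A}$ inside an $\ILb$-formula without changing provability (using that $\lequiv$ is an $\ILL$-congruence); after this rewriting each row is literally $\TIL{(\cdot)}$ applied to a first-order intuitionistic equivalence built from $\neg\neg$. By Proposition \ref{prop-IL}, $\ILb$-validity of the row is then equivalent to $\IL$-validity of that intuitionistic equivalence. This reduces the column to classical facts about intuitionistic double negation: the checkmarks are the standard $\IL$-provable equivalences ($\neg\neg$ commuting with $\to$ and $\wedge$, and, under an outer $\neg\neg$, with $\vee$ and $\exists$, plus stability of negated formulas), while the crossmarks (vi), (xi), (xiv) are the familiar $\IL$-underivabilities, namely the double-negation shift and the disjunction and existence properties for $\neg\neg$. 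Since $\ILb$ extends $\ILL$, these three failures descend to give the $\ILL$-crossmarks in (vi), (xi), (xiv).

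The genuinely hard part is the remaining block of $\ILL$-crossmarks, namely (ii), (iv), (viii), (ix) and (xv): each holds in $\ILb$ but must fail in $\ILL$, so the failure is caused purely by the absence of weakening and contraction and is invisible to intuitionistic Kripke semantics. Here I would exhibit explicit countermodels in the phase/quantale semantics of $\ILL$, i.e. small commutative quantales (with a suitable interpretation of the modality) refuting the non-trivial inclusion in each row: that $\neg\neg A \awedge \neg\neg B \not\vdash \neg\neg(A \awedge B)$ for (ii); that $\neg\neg A \cwedge \neg\neg B$ is not stable for (ix); that the inner double negations cannot be absorbed under $\neg\neg(\,\cdot \lto \cdot\,)$ for (iv); and that the modality $\bang{A}$ does not commute with $\neg\neg$, yielding (viii) and (xv). Selecting one or two phase spaces that simultaneously refute these five inclusions, and checking $\ILL$-soundness with respect to them, is the main obstacle and the place where the real work lies.
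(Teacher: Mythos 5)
Your proposal is correct and takes essentially the same route as the paper: trivial collapse under $\DNE$, direct derivations for all the positive entries, reduction of rows $(vi)$, $(xi)$, $(xiv)$ to the well-known intuitionistic (Kripke) failures, and algebraic countermodels for the five purely linear failures $(ii)$, $(iv)$, $(viii)$, $(ix)$, $(xv)$. The differences are presentational only -- you organise the positive cases via monad/stability lemmas and the $\ILb$ column via the reduction to $\IL$ through Proposition \ref{prop-IL}, and you propose quantale/phase-space countermodels where the paper uses bounded pocrims (citing the literature for $(ix)$) -- and, like the paper, you leave those algebraic constructions unexecuted.
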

\begin{proof} Clearly, all these equivalences hold in the presence of $\DNE$. Positive results (provability in $\IL_\bot = \ILZ + \PRO$ and/or $\ILZ$) are easy exercises. The fact that the equivalences in $(vi)$, $(xi)$ and $(xiv)$ fail in $\IL_\bot$ (and hence also in $\ILZ$) is well known and can be shown by simple Kripke structures. One can also show that $(ii)$, $(iv)$, $(viii)$, $(ix)$ and $(xv)$ are not provable in $\ILZ$ by constructing bounded pocrims (partially ordered residuated integral monoids) where these fail (for instance, see \cite{ao} for a pocrim where $(ix)$ fails). 
%
\end{proof}

When looking at translations of $\ILb$ into $\ILL$ we will also need the following results:

\begin{prop} \label{bang-equivalences} The following $\ILb = \ILL + \PRO$ equivalences hold / fail in $\ILL$\footnote{The equivalences valid in $\ILL$ are also valid in $\CLLb$. This result will be used in Section 6.}:
\[
\begin{array}{lrclc}
    & & & & \quad \ILL \quad \\[1mm]
    \hline
    (i) & \bang (\bang A \cwedge \bang B) 
    & \lequiv & 
    \bang A \cwedge \bang B 
        & \checkmark \\[1mm]
    (ii) & \bang (\bang A \awedge \bang B) 
    & \lequiv & 
    \bang A \awedge \bang B 
        & \crossmark \\[1mm]
    (iii) & \bang (\bang A \oplus \bang B) 
    & \lequiv &
    \bang A \oplus \bang B  
        & \checkmark \\[1mm]
    (iv) & \bang (\bang A \lto \, \bang B) 
    & \lequiv &
    \bang A \lto \, \bang B  
        & \crossmark \\[1mm]
    (v) & \bang \forall x \bang A 
    & \lequiv &
    \forall x \bang A 
        & \crossmark \\[1mm]
    (vi) & \bang \exists x \bang A 
    & \lequiv &
    \exists x \bang A 
        & \checkmark \\[1mm]
    (vii) & \bang \bang \bang A 
    & \lequiv &
    \bang \bang A  
        & \checkmark \\[1mm]
    \hline
    (viii) & \bang (\bang A \cwedge \bang B) 
    & \lequiv &
    \bang (A \cwedge B)  
        & \crossmark \\[1mm]
    (ix) & \bang (\bang A \awedge \bang B) 
    & \lequiv &
    \bang (A \awedge B)  
        & \checkmark \\[1mm]
    (x) & \bang (\bang A \lto \, \bang B) 
    & \lequiv &
    \bang (A \lto B)  
        & \crossmark \\[1mm]
    (xi) & \bang (\bang A \lto \, \bang B) 
    & \lequiv &
    \bang (\bang A \lto B)  
        & \checkmark \\[1mm]
    (xii) & \bang \forall x \bang A 
    & \lequiv &
    \bang \forall x A  
        & \checkmark \\[1mm]
    (xiii) & \bang \exists x \bang A 
    & \lequiv &
    \bang \exists x A  
        & \crossmark
\end{array}
\]
\end{prop}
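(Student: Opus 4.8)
The plan is to separate the uniform $\ILb$ claim (all thirteen equivalences) from the $\ILL$ column (seven provable, six failing entries).

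Over $\ILb = \ILL + \PRO$ every equivalence is immediate from Lemma~\ref{lemma-IL}. The relation $\lequiv$ is a congruence in $\ILL$, since each connective is monotone in its arguments ($\lto$ being antitone on the left, which is harmless because $\lequiv$ supplies both directions); hence I may rewrite subformulas freely. Lemma~\ref{lemma-IL}$(i)$ erases every occurrence of $\bang$, Lemma~\ref{lemma-IL}$(ii)$ identifies $\awedge$ with $\cwedge$, and Lemma~\ref{lemma-IL}$(iii)$ identifies $\top$ with $1$. Under this normalisation the two sides of each of $(i)$--$(xiii)$ reduce to one and the same $\ILL$-formula, so the equivalence holds in $\ILb$ by reflexivity and transitivity of $\lequiv$. (This also settles the footnote: an $\ILL$-provable entry is $\CLLb$-provable because $\CLLb = \ILL + \DNE \supseteq \ILL$.)

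For the provable $\ILL$ entries I would use only the standard exponential toolkit: dereliction $\bang X \vdash X$ via $(\bang\textup{L})$, monotonicity of $\bang$, promotion $(\bang\textup{R})$ --- available exactly when the whole left context is a multiset of $\bang$-formulas --- together with idempotence $\bang\bang X \lequiv \bang X$ and the Seely equivalences $\bang(X \awedge Y) \lequiv \bang X \cwedge \bang Y$ and $\bang\top \lequiv 1$ (each a short derivation). Entries $(i)$, $(iii)$, $(vi)$, $(vii)$ instantiate one pattern, \emph{``$\bang E \lequiv E$ whenever $E$ is assembled from $\bang$-formulas by $\cwedge$, $\oplus$, $\exists$ or $\bang$''}: $\bang E \vdash E$ is dereliction, and $E \vdash \bang E$ is obtained by building $E$ from its banged components and then promoting over the resulting all-banged context --- e.g. for $(iii)$, $(\oplus\textup{L})$ reduces the goal to $\bang A \vdash \bang(\bang A \oplus \bang B)$ and $\bang B \vdash \bang(\bang A \oplus \bang B)$, each closed by $(\oplus\textup{R})$ then $(\bang\textup{R})$. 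Entry $(ix)$ I would get by applying Seely twice, $\bang(\bang A \awedge \bang B) \lequiv \bang\bang A \cwedge \bang\bang B \lequiv \bang A \cwedge \bang B \lequiv \bang(A \awedge B)$. Entries $(xi)$ and $(xii)$ use a characteristic double promotion: for $(xi)$, from $\bang A \vdash \bang A$ and $B \vdash B$ one forms $\bang A, \bang A \lto B \vdash B$ by $(\lto\textup{L})$, applies $(\bang\textup{L})$ to reach the all-banged sequent $\bang A, \bang(\bang A \lto B) \vdash B$, promotes to $\bang B$, closes with $(\lto\textup{R})$, and promotes once more to reach $\bang(\bang A \lto B) \vdash \bang(\bang A \lto \bang B)$; the converse is monotonicity of $\bang$ applied to $\bang B \vdash B$. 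Entry $(xii)$ is analogous, promoting $A$ to $\bang A$ under the banged context $\bang\forall x A$ and, after $(\forall\textup{R})$, promoting again.

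The failing entries are where the actual difficulty lies. In each, exactly one direction holds routinely and the other would require promotion with a context that is not banged: $(ii)$ needs $\bang A \awedge \bang B \vdash \bang(\bang A \awedge \bang B)$ (promoting an additive conjunction), $(iv)$ needs $\bang A \lto \bang B \vdash \bang(\bang A \lto \bang B)$, $(v)$ needs $\forall x \bang A \vdash \bang\forall x \bang A$, $(viii)$ needs $\bang(A \cwedge B) \vdash \bang(\bang A \cwedge \bang B)$, $(x)$ needs $\bang(\bang A \lto \bang B) \vdash \bang(A \lto B)$ (one cannot feed a bare $A$ to $\bang A \lto \bang B$), and $(xiii)$ needs $\bang\exists x A \vdash \bang\exists x \bang A$. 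To prove non-derivability I would, as in Proposition~\ref{not-not-equivalences}, exhibit a finite sound model of $\ILL$ carrying a storage operator --- either a phase space with its canonical exponential, or a bounded commutative residuated lattice with an interior-type operator interpreting $\bang$ --- assigning the left formula a value not below the right one; alternatively one can run a cut-free proof search and observe that the outer $\bang$ on the right can only be produced by $(\bang\textup{R})$, forcing an all-banged context that is never reached. The main obstacle is precisely the construction of these exponential countermodels: unlike the pocrim refutations of the $\neg\neg$ table, they must carry a well-behaved $\bang$, and for $(v)$ and $(xiii)$ they must additionally use a non-trivial domain so that $\bang$ genuinely fails to commute with $\forall$ and $\exists$.
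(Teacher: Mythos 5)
Your proposal is correct and follows essentially the same route as the paper's (very terse) proof: the $\ILb$ column is immediate from $\PRO$ (your Lemma~\ref{lemma-IL} normalisation is just this spelled out), the positive $\ILL$ entries are routine exponential-rule derivations which the paper leaves implicit, and the failures are handled -- as in the paper -- by appeal to counter-models in the algebraic semantics of $\ILL$ (the paper uses pocrim-style models found with mace4, you propose phase spaces or residuated lattices with a storage operator, which is the same idea). Your identification of exactly which direction fails in each of $(ii)$, $(iv)$, $(v)$, $(viii)$, $(x)$, $(xiii)$ is accurate, and your explicit derivations for $(i)$, $(iii)$, $(vi)$, $(vii)$, $(ix)$, $(xi)$, $(xii)$ are sound.
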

\begin{proof} Clearly, using $\PRO$, all equivalences are valid in $\ILb$. The fact that some equivalences fail in $\ILL$ can be shown by constructing simple counter-models, which can be done using the algebraic semantics of $\ILL$ \cite{lec} and a tool such as mace4 \cite{prover9-mace4}.
\end{proof}

\section{Modular Translations and Simplifications}
\label{sec-modular-translations}

We have seen in the previous section that when we translate $\IL$, $\CL$ and $\CLL$ into the language of $\ILL$ (or $\ILZ$) we obtain the corresponding logics $\ILb$, $\CLb$ and $\CLLb$ -- extensions of $\ILL$ with combinations of the axioms $\PRO$ and $\DNE$. Thus, the systems essentially only differ in their capacity to distinguish between a formula $A$ and its exponential $\bang A$, and a formula $A$ and its double negation $\neg \neg A$. This provides some hints for the construction of translations between these calculi.

Using an arrow from a logic $L_1$ to another logic $L_2$ to indicate that $L_2$ is an extension\footnote{We say that a logic $L_1$ is an extension of a logic $L_2$ if $L_1$ is obtained from $L_2$ together with some additional axioms.} of $L_1$, the relationship between the logics $\ILL, \ILZ, \ILb, \IL_\bot, \CLb$ and $\CLLb$ can be visualized as follows:
\begin{diagram}[tight,width=4em,height=3em]
	\CLLb = \ILZ + \DNE & & \rTo & &  \CLb = \ILZ + \PRO + \DNE \\
	\uTo            &     &              &  & \uTo\\
	\ILZ &  & \rTo   &  &  \IL_\bot = \ILZ + \PRO \\
	\uTo            &     &              &  & \uTo\\
	\ILL &  & \rTo   &  &  \ILb = \ILL + \PRO \\
\end{diagram}
%
%
We will now focus on the various proof translations between these systems. 

\subsection{Modular translations}

For the rest of this section, let $T(\cdot)$ be an $\ILL$ or $\ILZ$ meta-level formula construct, e.g. 
\begin{itemize}
    \item $T(A) = \neg \neg A$ \quad (double negation)
    \item $T(A) = \; \bang A$ \quad (exponentiation)
    \item $T(A) = A = \Id(A)$ \quad (identity)
\end{itemize}

Most formula/proof translations are \emph{modular} in the sense that they are defined by induction on the structure of a given formula/proof. For instance, the Kuroda negative translation from $\CLb$ to $\IL_\bot$, which we will discuss in detail in Section 3, has the following (modular) inductive definition
\[
\begin{array}{rclrcl}
	\Tr{(A \cwedge B)}{\Kuroda} 
	& \pdefin & \Tr{A}{\Kuroda} \cwedge \Tr{B}{\Kuroda} &
	\Tr{(P)}{\Kuroda} 
	& \pdefin & P, 
	\textup{~for $P$ atomic} \\[1mm]
	\Tr{(A \awedge B)}{\Kuroda} 
	& \pdefin & \ \Tr{A}{\Kuroda} \awedge  \Tr{B}{\Kuroda} \quad & 
	\Tr{(\forall x A)}{\Kuroda}
	& \pdefin & \forall x \neg \neg \Tr{A}{\Kuroda} \\[1mm]
	\Tr{(A \oplus B)}{\Kuroda}
	& \pdefin & \Tr{A}{\Kuroda} \oplus \Tr{B}{\Kuroda} &
	\Tr{(\exists x A)}{\Kuroda}
	& \pdefin & \exists x \Tr{A}{\Kuroda} \\[1mm]
	\Tr{(A \lto B)}{\Kuroda} 
	& \pdefin & \Tr{A}{\Kuroda} \lto  \Tr{B}{\Kuroda} &
	\Tr{(\bang A)}{\Kuroda} 
	& \pdefin & \bang  \Tr{A}{\Kuroda}
\end{array}
\]
and $\Trans{A}{\Kuroda} \pdefin \neg \neg \Tr{A}{\Kuroda}$, while the G\"odel-Gentzen negative translation is defined as:
\[
\begin{array}{rclrcl}
	\Trans{(A \cwedge B)}{\Goedel} 
	& \pdefin & \Trans{A}{\Goedel} \cwedge \Trans{B}{\Goedel} &
	\Trans{(P)}{\Goedel} 
	& \pdefin & \neg \neg P, 
	\textup{~for $P$ atomic} \\[1mm]
	\Trans{(A \awedge B)}{\Goedel} 
	& \pdefin & \Trans{A}{\Goedel} \awedge \Trans{B}{\Goedel} & 
	\Trans{(\forall x A)}{\Goedel}
	& \pdefin & \forall x \Trans{A}{\Goedel} \\[1mm]
	\Trans{(A \oplus B)}{\Goedel}
	& \pdefin & \neg \neg (\Trans{A}{\Goedel} \oplus \Trans{B}{\Goedel}) \quad &
	\Trans{(\exists x A)}{\Goedel}
	& \pdefin & \neg \neg \exists x \Trans{A}{\Goedel} \\[1mm]
	\Trans{(A \lto B)}{\Goedel} 
	& \pdefin & \Trans{A}{\Goedel} \lto \Trans{B}{\Goedel} &
	\Trans{(\bang A)}{\Goedel} 
	& \pdefin & \bang \Trans{A}{\Goedel}.
\end{array}
\]
What we observe is that in the first case the double negation is placed ``inside'' the quantifier $\forall$, while in the second case we are placing double negations ``outside'' some of the connectives and quantifiers (namely $\oplus$, $\exists$ and atomic formulas). We will call a transformation such as
%
%
\[ \forall x A \quad \mapsto \quad \forall x \neg \neg A  \]
an \emph{inner transform}, as it is modifying the inner structure of the given formula $\forall x A$, while a transformation of the kind
%
%
\[ \exists x A \quad \mapsto \quad \neg \neg \exists x A \]
we will call an \emph{outer transform}, as it modifies the outer structure of $\exists x A $. 

\begin{defn}[Inner/Outer transforms] \label{def-inner-outer-trans} For any quantifier or modality $Q \in \{ \forall x , \exists x , \bang \}$ and any formula construct $T(\cdot)$, we say that a formula construct $\Transform_Q(A)$ is a \emph{$T$-based $Q$-inner-transform} if
	\begin{equation} \label{inner-transform-def}
		\Transform_Q(A) = Q (T'(A))
	\end{equation}
where $T' \in \{ T, \Id \}$. $\Transform_Q(A)$ is a \emph{$T$-based $Q$-outer-transform} if 
\begin{equation} \label{outer-transform-def}
	\Transform_Q(A) = T'(Q(A))
\end{equation}
where $T' \in \{ T, \Id \}$. In the case where $T$ is not the identity, we say that a transform $\Transform_Q'$ is simpler than another transform $\Transform_Q$ if $\Transform_Q(A)$ is $Q(T(A))$ or $T(Q(A))$ while $\Transform_Q'(A) = Q(A)$ (outer or inner $T$ omitted). \\[1mm]
Similarly, for any connective $\Box \in \{ \cwedge, \awedge, \oplus, \lto \}$, we say that $\Transform_\Box(A, B)$ is a \emph{$T$-based $\Box$-inner-transform} if
\[ \Transform_\Box(A, B) = T'(A) \squareOp T''(B), \]
where $T', T'' \in \{ T, \Id \}$. $\Transform_\Box(A, B)$ is a \emph{$T$-based $\Box$-outer-transform} if
\[
\Transform_\Box(A, B) = T'(A \squareOp B)
\] 
where $T' \in \{ T, \Id \}$. As above, in the case for which $T$ is not the identity,  we say that a transform $\Transform_\Box'$ is simpler than another transform $\Transform_\Box$ if $\Transform_\Box$ uses $T$'s in all places that $\Transform_\Box'$ does, but not conversely. For instance, $A \squareOp B, T(A) \squareOp B$ and $A \squareOp T(B)$ are all simpler than $T(A) \squareOp T(B)$, but $A \squareOp T(B)$ is not simpler than $T(A) \squareOp B$.
\end{defn}

In order to illustrate the notion of an inner or outer transform, consider $T(A) = \neg \neg A$ and the connective $A \lto B$. The transform 
\[ \Transform_{\lto}(A, B) = A \lto \neg \neg B \]
is a $\neg \neg$-based $\lto$-inner transform, whereas 
\[ \Transform_{\lto}(A, B) = \neg \neg (A \lto B) \] 
is a $\neg \neg$-based $\lto$-outer transform. 

For each formula construct $T$ and $\Box \in \{ \cwedge, \awedge, \oplus, \lto \}$ or $Q \in \{ \forall x , \exists x , \bang \}$, there is a fixed finite number of $T$-based inner- or outer-transforms. For instance, there are only two $T$-based $\Box$-outer transforms: $T(A \squareOp B)$ and $A \squareOp B$, and there are four $T$-based $\Box$-inner transforms: $T(A) \squareOp T(B)$, $T(A) \squareOp B$, $A \squareOp T(B)$, and $A \squareOp B$.

Note how the Kuroda translation $\Tr{A}{\Kuroda}$ is determined by $\neg \neg$-based inner-transforms, e.g. 
\[ 
\Transform_{\forall x}^{\Kuroda}(A) = \forall x \neg \neg A.\] 
%
We define the notion of a general modular inner $T$-translation as follows:

\begin{defn}[Modular inner $T$-translation] We say that a formula translation $\genericTrans{(\cdot)}$, over the language of $\ILL$ or $\ILZ$, is a \emph{modular inner $T$-translation} if, for some $T$-based inner-transforms $\ITTr{\forall x}, \ITTr{\exists x}, \ITTr{!}, \ITTr{\cwedge}, \ITTr{\awedge}, \ITTr{\oplus}, \ITTr{\lto}$ we have that
	\[ \genericTrans{A} \equiv T(\genericTr{A}) \]
	where $\genericTr{(\cdot)}$ is defined inductively as:
	\[
	\begin{array}{rclrcl}
		\genericTr{(A \cwedge B)} & \pdefin & \ITTr{\cwedge}(\genericTr{A}, \genericTr{B}) &
		\genericTr{P} & \pdefin & P, \textup{~for $P$ atomic} \\[1mm]
		\genericTr{(A \awedge B)} & \pdefin & \ITTr{\awedge}(\genericTr{A}, \genericTr{B}) &
		\genericTr{(\forall x A)} & \pdefin & \ITTr{\forall x} (\genericTr{A}) \\[1mm]
		\genericTr{(A \oplus B)} & \pdefin & \ITTr{\oplus}(\genericTr{A}, \genericTr{B}) &
		\genericTr{(\exists x A)} & \pdefin & \ITTr{\exists x} (\genericTr{A}) \\[1mm]
		\genericTr{(A \lto B)} & \pdefin & \ITTr{\lto}(\genericTr{A}, \genericTr{B}) \quad &
		\genericTr{(! A)} & \pdefin & \ITTr{!}(\genericTr{A}).
	\end{array}
	\]
\end{defn}

Similary, the G\"odel-Gentzen $\neg \neg$-translation $\Tr{A}{\Goedel}$ arises from a set of $\neg \neg$-based outer-transforms, e.g. 
\[ 
\Transform_{\exists x}^{\Goedel}(A) = \neg \neg \exists x A
\quad
\mbox{and} 
\quad 
\Transform_{\oplus}^{\Goedel}(A, B) = \neg \neg (A \oplus B). \]
Hence, we also define the notion of a general modular outer $T$-translation as follows: 

\begin{defn}[Modular outer $T$-translation] We say that a formula translation $\genericTrans{(\cdot)}$, over the language of $\ILL$ or $\ILZ$, is a \emph{modular outer $T$-translation} if it is defined inductively as
	\[
	\begin{array}{rclrcl}
		\genericTrans{(A \cwedge B)} & \pdefin & \ITTr{\cwedge}(\genericTrans{A}, \genericTrans{B}) &
		\genericTrans{P} & \pdefin & T(P), \textup{~for $P$ atomic}\\[1mm]
		\genericTrans{(A \awedge B)} & \pdefin & \ITTr{\awedge}(\genericTrans{A}, \genericTrans{B}) &
		\genericTrans{(\forall x A)} & \pdefin & \ITTr{\forall x} (\genericTrans{A})\\[1mm]
		\genericTrans{(A \oplus B)} & \pdefin & \ITTr{\oplus}(\genericTrans{A}, \genericTrans{B}) &
		\genericTrans{(\exists x A)} & \pdefin & \ITTr{\exists x} (\genericTrans{A})\\[1mm]
		\genericTrans{(A \lto B)} & \pdefin & \ITTr{\lto}(\genericTrans{A}, \genericTrans{B}) \quad
		\quad &
		\genericTrans{(!A)} & \pdefin & \ITTr{!}(\genericTrans{A})
	\end{array}
	\]
	where $\ITTr{\forall x}, \ITTr{\exists x}, \ITTr{!}, \ITTr{\cwedge}, \ITTr{\awedge}, \ITTr{\oplus}, \ITTr{\lto}$ are $T$-based outer-transforms.
\end{defn}

\begin{remark} As we will see, some formula translations can be presented both as modular inner $T$-translations or modular outer $T$-translations. So, one should think of the formulation of the translation via the inner and outer transforms as possible presentations (or computations) of the formula translation. Notable cases are the Kolmogorov translation (see Def. \ref{def:Kolmogorov-translations}) and the full Girard translation (see Def. \ref{def-Girard-full}).
\end{remark}

Obviously, we want translations from a logic $S_1$ into another logic $S_2$ that are ``sound'', in the sense that they preserve provability in a non-trivial way:  

\begin{defn}[Sound translations] 
	Given two $\ILL$ theories $S_1$ and $S_2$. We say that $\genericTrans{(\cdot)}$ is a sound translation from $S_1$ to $S_2$ if for all formulas $A$ we have that
	\begin{itemize}
		\item[(i)] If $\vdash_{S_1} A$ then $\vdash_{S_2} \genericTrans{A}$, and
		\item[(ii)] $A$ and $\genericTrans{A}$ are equivalent over $S_1$.
	\end{itemize}
\end{defn}

All the translations we are going to present in this paper are sound modular $T$-translations for some extensions of $\ILL$ and some $T(\cdot)$. 

\begin{defn}[Equivalent translations] 
	Two sound translations $\genericTrans{(\cdot)}$ and $\genericTransPrime{(\cdot)}$ from $S_1$ to $S_2$ are \emph{equivalent} if $\genericTrans{A}$ and $\genericTransPrime{A}$ are equivalent over $S_2$, for all $S_1$ formulas $A$.
\end{defn}

\subsection{Simplifying translations (from inside and outside)}

Although two different translations might be equivalent, it can still be that one is ``simpler'' than the other. The following definition formalises this notion.

\begin{defn}[Simplified translation] Let $\genericTrans{(\cdot)}$ and 
	$\genericTransPrime{(\cdot)}$ be two equivalent translations. We say that $\genericTransPrime{(\cdot)}$ is a \emph{simplification of} $\genericTrans{(\cdot)}$ if the transforms of $\genericTransPrime{(\cdot)}$ are either identical to those of $\genericTrans{(\cdot)}$ or simpler -- in the sense of Definition \ref{def-inner-outer-trans} -- than the corresponding transforms of $\genericTrans{(\cdot)}$ (and must be simpler in at least one case). 
\end{defn}



Our motivation for the present work is twofold. As we already mentioned above, expressing the logical systems considered in this paper as extensions of Intuitionistic Linear Logic ($\ILL$) allows us to identify the specific axioms that distinguish them, enabling the construction of straightforward canonical modular translations between these systems. Secondly, we aim to show that these canonical formula translations (being modular) can be systematically transformed into simpler (also modular) ones. We refer to this process, applied to a modular translation, as a ``simplification''. Through this approach, as we will see, many of the well-known translations in the literature can be obtained, thus offering an explanation for how seemingly non-trivial modular translations arise. The ``simplification'' of a modular translation can happen in two ways: \emph{from outside} or \emph{from inside}.


\begin{defn}[Simplification from outside] \label{def-simp-from-outside} Given two equivalent modular outer $T$-translations $\genericTrans{(\cdot)}$ and $\genericTransPrime{(\cdot)}$, we say that  $\genericTransPrime{(\cdot)}$ is a \emph{simplification from outside of} $\genericTrans{(\cdot)}$  if $\genericTransPrime{(\cdot)}$ is a \emph{simplification of} $\genericTrans{(\cdot)}$, and, when we consider the set of formula reductions
	\[
	\begin{array}{rcl}
		\IT{{\rm Tr}}{\Box}(T(A), T(B)) 
		& \mapsto & \IT{{\rm Tr}'}{\Box}(T(A), T(B)) \\[1mm]
		\IT{{\rm Tr}}{Q}(T(A)) 
		& \mapsto & \IT{{\rm Tr}'}{Q}(T(A))
	\end{array}
	\]
	for $\Box \in \{ \oplus, \cwedge, \awedge, \lto \}$ and $Q \in \{ \forall x, \exists x, \bang \}$, we have that $\genericTransPrime{A}$ can be obtained from $\genericTrans{A}$ by applying these reductions recursively, starting from whole formula and working towards the atomic formulas.
\end{defn}

Therefore, if $\genericTransPrime{(\cdot)}$ is a simplification from outside of $\genericTrans{(\cdot)}$, the $T$-based inner-transforms of $\genericTrans{(\cdot)}$ and $\genericTransPrime{(\cdot)}$ will tell us how to systematically simplify $\genericTrans{(\cdot)}$ to obtain  $\genericTransPrime{(\cdot)}$. For instance, we will see in the next section that the G\"odel-Gentzen negative translation is a simplification from outside of (an outer-presentation of) the Kolmogorov translation, whereas the Kuroda translation will be shown to be a simplification from inside of (an inner-presentation of) the Kolmogorov translation.

\begin{defn}[Simplification from inside] \label{def-simp-from-inside} Given two equivalent modular inner $T$-translations $\genericTrans{(\cdot)}$ and $\genericTransPrime{(\cdot)}$, we say that $\genericTransPrime{(\cdot)}$ is a \emph{simplification from inside of} $\genericTrans{(\cdot)}$ if $\genericTransPrime{(\cdot)}$ is a \emph{simplification of} $\genericTrans{(\cdot)}$, and, when we consider the set of formula reductions
	\[
	\begin{array}{rcl}
		T(\IT{{\rm Tr}}{\Box}(A, B)) 
		& \mapsto & T(\IT{{\rm Tr}'}{\Box}(A, B)) \\[1mm]
		T(\IT{{\rm Tr}}{Q}(A)) 
		& \mapsto & T(\IT{{\rm Tr}'}{Q}(A))
	\end{array}
	\]
	for $\Box \in \{ \oplus, \cwedge, \awedge, \lto \}$ and $Q \in \{ \forall x, \exists x, \bang \}$, we find that $\genericTransPrime{A}$ can be obtained from $\genericTrans{A}$ by applying these reductions inductively, starting from the atomic formulas and working toward the whole formula.
\end{defn}

\section{Translations from $\CLb$ to $\IL_\bot$}
\label{sec-cl-to-il}

Let us now see how the general definitions of a translation simplification of the previous section apply to the concrete case of \emph{negative translations}, such as the Kolmogorov, G\"odel-Gentzen and Kuroda translations. In the following sections, we will carry out a similar study of the Girard translations of standard logic into linear logic. 

%

\subsection{Kolmogorov, G\"odel-Gentzen and Kuroda translations}


Kolmogorov's negative translation \cite{kol} of $\CL$ to $\IL$ works by placing double negations in front of each subformula inductively. The same can be done for translating $\CLb$ to $\IL_\bot$ -- in this case we are working with $T(A) = \neg \neg A$. In fact, there are two ways to formally present this inductively, which we will call $\Kolm{o}$ and $\Kolm{i}$. 

\begin{defn}[Kolmogorov translation \cite{kol}] \label{def:Kolmogorov-translations} The \emph{outer} presentation of Kolmogorov's translation, which we will denote by $\Trans{(\cdot)}{\Kolm{o}}$, uses of outer transforms 
\begin{itemize}
    \item $\Transform_Q^{\Kolm{o}}(A) = \neg \neg Q A$, for $Q \in \{ \forall x , \exists x , \bang \}$, and 
    \item $\Transform_\Box^{\Kolm{o}}(A, B) = \neg \neg (A \squareOp B)$, for $\Box \in \{ \cwedge, \awedge, \oplus, \lto \}$, 
\end{itemize}
giving rise to a modular outer $\neg \neg$-translation $\Trans{A}{\Kolm{o}}$, which can also be inductively defined as:
\[
\begin{array}{rclrcl}
    \Trans{(A \cwedge B)}{\Kolm{o}} 
    & \pdefin & \neg \neg (\Trans{A}{\Kolm{o}} \cwedge \Trans{B}{\Kolm{o}}) &
    \Trans{P}{\Kolm{o}} 
    & \pdefin & \neg \neg P, 
    \textup{~for $P$ atomic} \\[1mm]
    \Trans{(A \awedge B)}{\Kolm{o}} 
    & \pdefin & \neg \neg (\Trans{A}{\Kolm{o}} \awedge \Trans{B}{\Kolm{o}}) & 
    \Trans{(\forall x A)}{\Kolm{o}}
    & \pdefin & \neg \neg \forall x \Trans{A}{\Kolm{o}} \\[1mm]
    \Trans{(A \oplus B)}{\Kolm{o}}
    & \pdefin & \neg \neg (\Trans{A}{\Kolm{o}} \oplus \Trans{B}{\Kolm{o}}) &
    \Trans{(\exists x A)}{\Kolm{o}}
    & \pdefin & \neg \neg \exists x \Trans{A}{\Kolm{o}} \\[1mm]
    \Trans{(A \lto B)}{\Kolm{o}} 
    & \pdefin & \neg \neg (\Trans{A}{\Kolm{o}} \lto \Trans{B}{\Kolm{o}}) \quad &
    \Trans{(\bang A)}{\Kolm{o}} 
    & \pdefin & \neg \neg \bang \Trans{A}{\Kolm{o}}.
\end{array}
\]
Alternatively, we can also make use of inner transforms
\begin{itemize}
    \item $\Transform_Q^{\Kolm{i}}(A) = Q \neg \neg A$, for $Q \in \{ \forall x , \exists x , \bang \}$, and 
    \item $\Transform_\Box^{\Kolm{i}}(A, B) = \neg \neg A \squareOp \neg \neg B$, for $\Box \in \{ \cwedge, \awedge, \oplus, \lto \}$,
\end{itemize}
giving rise to a modular inner $\neg \neg$-translation
\eqleft{\Trans{A}{\Kolm{i}} \pdefin \neg \neg \Tr{A}{\Kolm{i}}}
where $\Tr{A}{\Kolm{i}}$ is defined inductively as:
\[
\begin{array}{rclrcl}
    \Tr{(A \cwedge B)}{\Kolm{i}} 
    & \pdefin & \neg \neg \Tr{A}{\Kolm{i}} \cwedge \neg \neg \Tr{B}{\Kolm{i}} &
    \Tr{P}{\Kolm{i}} 
    & \pdefin & P, 
    \textup{~for $P$ atomic} \\[1mm]
    \Tr{(A \awedge B)}{\Kolm{i}} 
    & \pdefin & \neg \neg \Tr{A}{\Kolm{i}} \awedge \neg \neg \Tr{B}{\Kolm{i}} & 
    \Tr{(\forall x A)}{\Kolm{i}}
    & \pdefin & \forall x \neg \neg \Tr{A}{\Kolm{i}} \\[1mm]
    \Tr{(A \oplus B)}{\Kolm{i}}
    & \pdefin & \neg \neg \Tr{A}{\Kolm{i}} \oplus \neg \neg \Tr{B}{\Kolm{i}} &
    \Tr{(\exists x A)}{\Kolm{i}}
    & \pdefin & \exists x \neg \neg \Tr{A}{\Kolm{i}} \\[1mm]
    \Tr{(A \lto B)}{\Kolm{i}} 
    & \pdefin & \neg \neg \Tr{A}{\Kolm{i}} \lto \neg \neg \Tr{B}{\Kolm{i}} \quad \quad &
    \Tr{(\bang A)}{\Kolm{i}} 
    & \pdefin & \bang \neg \neg \Tr{A}{\Kolm{i}}.
\end{array}
\]
\end{defn}

It is easy to check, by induction on the structure of $A$, that $\Trans{A}{\Kolm{o}} \equiv \Trans{A}{\Kolm{i}}$, for all formulas $A$. Intuitively, we can think of $\Trans{(\cdot)}{\Kolm{o}}$ as working bottom-up: start by placing double negations in front of all atomic formulas, and then work your way up the syntax tree placing double negations in front of all connectives, quantifiers and modality. $\Trans{(\cdot)}{\Kolm{i}}$, on the other hand, produces the same result, but we are working top-down: placing a double negation in front of the whole formula, and then recursively going down the syntax tree placing double negations in front of all subformulas.

The G\"odel-Gentzen translation, on the other hand, only has a modular outer $\neg \neg$-translation presentation:

\begin{defn}[Gödel-Gentzen translation \cite{god}] \label{def-god} The G\"odel-Gentzen negative translation is a modular outer $\neg \neg$-translation defined by $\neg \neg$-based outer transforms:
	\[
	\begin{array}{cclccl}
		\IT{\Goedel}{\cwedge}(A, B) 
		& \pdefin & A \cwedge B &  &  & \\[1mm]
		\IT{\Goedel}{\awedge}(A, B) 
		& \pdefin & A \awedge B & 
		\IT{\Goedel}{\forall x} (A)
		& \pdefin & \forall x A \\[1mm]
		\IT{\Goedel}{\oplus}(A, B)
		& \pdefin & \neg \neg (A \oplus B) \quad &
		\IT{\Goedel}{\exists x} (A)
		& \pdefin & \neg \neg \exists x A \\[1mm]
		\IT{\Goedel}{\lto}(A, B) 
		& \pdefin & A \lto B \quad \quad &
		\IT{\Goedel}{!}(A) 
		& \pdefin &  \bang A.
	\end{array}
	\]
	Hence, $\Trans{A}{\Goedel}$ is defined inductively as:
	\[
	\begin{array}{rclrcl}
		\Trans{(A \cwedge B)}{\Goedel} 
		& \pdefin & \Trans{A}{\Goedel} \cwedge \Trans{B}{\Goedel} &
		\Trans{P}{\Goedel} 
		& \pdefin & \neg \neg P, 
		\textup{~for $P$ atomic} \\[1mm]
		\Trans{(A \awedge B)}{\Goedel} 
		& \pdefin & \Trans{A}{\Goedel} \awedge \Trans{B}{\Goedel} & 
		\Trans{(\forall x A)}{\Goedel}
		& \pdefin & \forall x \Trans{A}{\Goedel} \\[1mm]
		\Trans{(A \oplus B)}{\Goedel}
		& \pdefin & \neg \neg (\Trans{A}{\Goedel} \oplus \Trans{B}{\Goedel})  \quad &
		\Trans{(\exists x A)}{\Goedel}
		& \pdefin & \neg \neg \exists x \Trans{A}{\Goedel} \\[1mm]
		\Trans{(A \lto B)}{\Goedel} 
		& \pdefin & \Trans{A}{\Goedel} \lto \Trans{B}{\Goedel} &
		\Trans{(\bang A)}{\Goedel} 
		& \pdefin &  \bang \Trans{A}{\Goedel}.
	\end{array}
	\]
	
\end{defn}

The Kuroda negative translation, on the other hand, is a modular inner $\neg \neg$-translation:

\begin{defn}[Kuroda translation \cite{kur}] \label{def-kur} The Kuroda negation translation is a modular inner $\neg \neg$-translation defined by $\neg \neg$-based inner transforms:
	\[
	\begin{array}{rclrcl}
		\IT{\Kuroda}{\cwedge}(A, B) 
		& \pdefin & A \cwedge B &  &  & \\[1mm]
		\IT{\Kuroda}{\awedge}(A, B) 
		& \pdefin & A \awedge B \quad & 
		\IT{\Kuroda}{\forall x} (A)
		& \pdefin & \forall x \neg \neg A \\[1mm]
		\IT{\Kuroda}{\oplus}(A, B)
		& \pdefin & A \oplus B &
		\IT{\Kuroda}{\exists x} (A)
		& \pdefin & \exists x A \\[1mm]
		\IT{\Kuroda}{\lto}(A, B) 
		& \pdefin & A \lto B &
		\IT{\Kuroda}{!}(A) 
		& \pdefin & \bang A.
	\end{array}
	\]
	Hence,
	\eqleft{\Trans{A}{\Kuroda} \pdefin \neg \neg \Tr{A}{\Kuroda}}
	where $\Tr{A}{\Kuroda}$ is defined inductively as:
	\[
	\begin{array}{rclrcl}
		\Tr{(A \cwedge B)}{\Kuroda} 
		& \pdefin & \Tr{A}{\Kuroda} \cwedge \Tr{B}{\Kuroda} &
		\Tr{(P)}{\Kuroda} 
		& \pdefin & P, 
		\textup{~for $P$ atomic} \\[1mm]
		\Tr{(A \awedge B)}{\Kuroda} 
		& \pdefin &  \Tr{A}{\Kuroda} \awedge \Tr{B}{\Kuroda} \quad & 
		\Tr{(\forall x A)}{\Kuroda}
		& \pdefin & \forall x \neg \neg \Tr{A}{\Kuroda} \\[1mm]
		\Tr{(A \oplus B)}{\Kuroda}
		& \pdefin & \Tr{A}{\Kuroda} \oplus \Tr{B}{\Kuroda} &
		\Tr{(\exists x A)}{\Kuroda}
		& \pdefin & \exists x \Tr{A}{\Kuroda} \\[1mm]
		\Tr{(A \lto B)}{\Kuroda} 
		& \pdefin & \Tr{A}{\Kuroda} \lto \Tr{B}{\Kuroda} &
		\Tr{(\bang A)}{\Kuroda} 
		& \pdefin & \bang \Tr{A}{\Kuroda}.
	\end{array}
	\]
\end{defn}


\begin{prop} \label{prop-sound-double-negation-trans} The above four modular $\neg \neg$-translations (inner Kolmogorov, outer Kolmogorov, G\"odel-Gentzen and Kuroda) are all sound translations of $\CLb$ to $\IL_\bot$, and equivalent to each other.
\end{prop}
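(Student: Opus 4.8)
The plan is to prove the statement in two parts, corresponding to the two properties in the definition of a sound translation (soundness of provability, and equivalence of each formula with its translation over $\CLb$), and then to establish mutual equivalence over $\ILb$.

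First I would reduce everything to a single comparison against the outer Kolmogorov translation $\Trans{(\cdot)}{\Kolm{o}}$. The excerpt already notes that $\Trans{A}{\Kolm{o}} \equiv \Trans{A}{\Kolm{i}}$ for all $A$, by a routine induction on the structure of $A$ (the inductive step being precisely the $\CLLb$ equivalences (i)--(viii) of Proposition \ref{not-not-equivalences}, each of which holds already in $\ILb$ except that we only need the $\CLb$/$\ILb$ direction here, which is the $\checkmark$ column). So the two Kolmogorov presentations are equivalent over $\ILb$, and it remains to show that the G\"odel--Gentzen translation $\Trans{(\cdot)}{\Goedel}$ and the Kuroda translation $\Trans{(\cdot)}{\Kuroda}$ are each equivalent to Kolmogorov over $\ILb$; equivalence to each other then follows by transitivity. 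For the G\"odel--Gentzen case I would argue that $\Trans{A}{\Goedel} \equiv \Trans{A}{\Kolm{o}}$ by induction on $A$, using at each step the ``inner'' equivalences (ix)--(xv) of Proposition \ref{not-not-equivalences} to absorb or introduce the double negations that $\Kolm{o}$ places in front of $\cwedge$, $\awedge$, $\lto$, $\forall$ and $\bang$ but which G\"odel--Gentzen omits --- these are exactly the reductions licensed by the notion of a simplification from outside (Definition \ref{def-simp-from-outside}), and all the relevant equivalences carry the $\checkmark$ in the $\ILb$ column. Symmetrically, for Kuroda I would show $\Trans{A}{\Kuroda} \equiv \Trans{A}{\Kolm{i}}$ by induction, this time using the same family of equivalences in the form of simplifications from inside (Definition \ref{def-simp-from-inside}) to strip the inner double negations that $\Kolm{i}$ inserts under $\cwedge$, $\awedge$, $\oplus$, $\lto$, $\exists$ and $\bang$.

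Next I would verify the two clauses of soundness. For clause (ii), that $A$ and $\Trans{A}{\Kolm{o}}$ (and hence all four translations) are equivalent over $\CLb$: since $\DNE$ gives $\neg \neg A \lequiv A$ in $\CLb$, a trivial induction collapses every inserted double negation, so $\Trans{A}{\Kolm{o}} \lequiv A$ over $\CLb$. For clause (i), that $\vdash_{\CLb} A$ implies $\vdash_{\ILb} \Trans{A}{\Kolm{o}}$: I would proceed by induction on a $\CLb = \ILL + \PRO + \DNE$ derivation of $A$. The $\ILL$ rules and the $\PRO$ axiom translate straightforwardly --- the double-negation wrapper is monotone and commutes appropriately with the connectives via standard intuitionistic-linear manipulations (e.g. $\neg\neg$ distributes over $\cwedge$ in the right direction, and $\neg\neg\neg\neg A \lequiv \neg\neg A$). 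The only genuinely classical step is the $\DNE$ axiom $\neg \neg A \vdash A$: here one checks that $\Trans{(\neg\neg A)}{\Kolm{o}} \vdash_{\ILb} \Trans{A}{\Kolm{o}}$, which reduces, after unfolding $\neg$ as $\lto 0$, to the $\ILL$-provable triple-negation law $\neg\neg\neg B \vdash \neg B$ applied to $B = \neg \Trans{A}{\Kolm{o}}$ together with $\neg\neg(\cdot)$-monotonicity.

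I expect the main obstacle to be the soundness direction (clause (i)), and specifically the bookkeeping needed to push the Kolmogorov double negations through the linear rules without accidentally relying on contraction or weakening where the multiplicative structure forbids it; the modality rules $(\bang\textup{R})$, $(\rulecon)$ and $(\rulewkn)$ interacting with the wrapper $\neg\neg\bang(\cdot)$ require the most care. In contrast, the three equivalence claims are, as the excerpt emphasises, ``easy exercises'' once Proposition \ref{not-not-equivalences} is in hand: each is a direct structural induction whose inductive step is a single tabulated equivalence, so I would present those compactly and concentrate the written argument on the $\DNE$ clause of the soundness induction.
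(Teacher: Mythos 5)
Your proposal is correct and follows essentially the same route as the paper: soundness is established by induction on the $\CLb$-derivation with the $\DNE$ axiom as the only non-trivial case, and the mutual equivalence of the four translations over $\ILb$ is read off from the tabulated equivalences of Proposition \ref{not-not-equivalences} (the paper exemplifies with the inner Kolmogorov translation, you with the outer, which makes no difference). One small simplification you missed: $\Trans{A}{\Kolm{o}}$ and $\Trans{A}{\Kolm{i}}$ are syntactically identical formulas, so no appeal to equivalences $(i)$--$(viii)$ is needed for that comparison.
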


\begin{proof} Consider, for example, the inner Kolmogorov translation of $\CLb$ into $\IL_\bot$. Clearly, $\CLb$ proves the equivalence between $A$ and $\Trans{A}{\Kolm{i}}$. Now, we can show that $\Trans{A}{\Kolm{i}}$ is derivable in $\IL_\bot$ whenever $A$ is derivable in $\CLb$, by induction on the $\CLb$-derivation of $A$. The main challenge is to prove that the translations of the axioms of $\CLb$ are derivable in $\IL_\bot$ (it is easy to check that the translations respect logical rules). But, except for $\DNE$, the translation of each axiom of $\CLb$ is easily derivable in $\IL_\bot$. Finally, one can also show that $\IL_\bot$ proves $\Trans{B}{\Kolm{i}}$, for any instance $B$ of $\DNE$. The equivalence between the translations (over $\IL_\bot$) can be shown using the $\IL_\bot$-equivalences of Proposition \ref{not-not-equivalences}.
\end{proof}

Although all four translations above are equivalent, one can observe that the G\"odel-Gentzen translation follows the same pattern as the outer Kolmogorov $\neg \neg$-translation, but has fewer double negation in certain places (namely, $\otimes,\awedge, \lto$,$\forall$ and $\bang$),
%
%
whereas the Kuroda translation is similar to the inner Kolmorov $\neg \neg$-translation, but again with fewer double negations in certain places (namely $\cwedge$, $\awedge$, $\oplus$, $\lto$, $\exists $ and $\bang$).
%
%

\newcommand{\ML}{ML}

\subsection{$\Trans{(\cdot)}{\Goedel}$ is a simplification from outside of $\Trans{(\cdot)}{\Kolm{o}}$}

One might ask, what is it that allows us to simplify the outer Kolmogorov translation $\Trans{(\cdot)}{\Kolm{o}}$ into the G\"odel-Gentzen translation $\Trans{(\cdot)}{\Goedel}$, omitting double negations in additive and multiplicative conjunctions ($\awedge$, $\otimes$), implications ($\lto$), universal quantifiers ($\forall$) and $(\bang)$? Similarly, why can we simplify the inner Kolmogorov translation $\Trans{(\cdot)}{\Kolm{i}}$ into the Kuroda translation $\Trans{(\cdot)}{\Kuroda}$, omitting double negations in the logical connectives, in the existential quantifiers ($\exists$) and in the modalities $(\bang)$? We observe the following:

\begin{prop}[$\Trans{(\cdot)}{\Goedel}$ is a simplification from outside of $\Trans{(\cdot)}{\Kolm{o}}$] \label{thm-goedel-simplifies-kol} For any formula $A$, one can obtain $\Trans{A}{\Goedel}$ from $\Trans{A}{\Kolm{o}}$ by systematically applying the following formula reductions starting from the whole formula and inductively applying these to the subformulas:
	\[
	\begin{array}{rcl}
		\underbrace{\neg \neg (\neg \neg A \awedge \neg \neg B)}_{\IT{\Kolm{o}}{\awedge}(\neg \neg A, \neg \neg B)} 
		& \quad \mapsto \quad & 
		\underbrace{\neg \neg A \awedge \neg \neg B}_{\IT{\Goedel}{\awedge}(\neg \neg A, \neg \neg B)} \\[7mm]
		\underbrace{\neg \neg (\neg \neg A \cwedge \neg \neg B)}_{\IT{\Kolm{o}}{\cwedge}(\neg \neg A, \neg \neg B)} 
		& \quad \mapsto \quad & 
		\underbrace{\neg \neg A \cwedge \neg \neg B}_{\IT{\Goedel}{\cwedge}(\neg \neg A, \neg \neg B)} \\[7mm]
		\underbrace{\neg \neg (\neg \neg A \lto \neg \neg B)}_{\IT{\Kolm{o}}{\lto}(\neg \neg A, \neg \neg B)} 
		& \quad \mapsto \quad & 
		\underbrace{\neg \neg A \lto \neg \neg B}_{\IT{\Goedel}{\lto}(\neg \neg A, \neg \neg B)} \\[7mm]
		\underbrace{\neg \neg \forall x \neg \neg A}_{\IT{\Kolm{o}}{\forall x}(\neg \neg A)}
		& \quad \mapsto \quad &
		\underbrace{\forall x \neg \neg A}_{\IT{\Goedel}{\forall x}(\neg \neg A)}
        \\[7mm]
		\underbrace{\neg \neg \bang \neg \neg A}_{\IT{\Kolm{o}}{!}(\neg \neg A)}
		& \quad \mapsto \quad &
		\underbrace{\bang \neg \neg  A}_{\IT{\Goedel}{!}(\neg \neg A)}.
	\end{array}
	\]
	Hence, $\Trans{(\cdot)}{\Goedel}$ is a simplification from outside of $\Trans{(\cdot)}{\Kolm{o}}$ -- in the sense of Definition \ref{def-simp-from-outside}. Moreover, by Proposition \ref{not-not-equivalences}, the reductions above are reversible in $\IL_\bot$, so $\Trans{A}{\Goedel}$ and $\Trans{A}{\Kolm{o}}$ are $\IL_\bot$-equivalent.
\end{prop}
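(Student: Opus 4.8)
The plan is to split the claim into a \emph{syntactic} half and a \emph{semantic} half. The syntactic half is what certifies, in the sense of Definition~\ref{def-simp-from-outside}, that $\Trans{(\cdot)}{\Goedel}$ is a simplification from outside of $\Trans{(\cdot)}{\Kolm{o}}$: namely, that applying the five displayed reductions outside-in carries $\Trans{A}{\Kolm{o}}$ to $\Trans{A}{\Goedel}$. The semantic half is the $\ILb$-equivalence of the two translations. The first thing I would record is that both translations are built by recursion over the \emph{same} syntax tree of $A$, placing the same connective, quantifier or modality at each node and differing only in whether an outer $\neg\neg$ is attached. Concretely, the five reductions are exactly the replacements of the outer-Kolmogorov transform $\IT{\Kolm{o}}{\Box}$ by the Gödel--Gentzen transform $\IT{\Goedel}{\Box}$ for $\Box \in \{\cwedge, \awedge, \lto\}$, and of $\IT{\Kolm{o}}{Q}$ by $\IT{\Goedel}{Q}$ for $Q \in \{ \forall x, \bang \}$; for $\oplus$, $\exists x$ and for atomic $P$ the two transforms already coincide (both keep the outer $\neg\neg$), so no reduction is needed. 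Since in each of the five cases the Gödel--Gentzen transform merely drops the outer $\neg\neg$ while leaving every remaining $\neg\neg$ in place, $\Trans{(\cdot)}{\Goedel}$ is a simplification of $\Trans{(\cdot)}{\Kolm{o}}$ in the sense of Definition~\ref{def-inner-outer-trans}, strictly simpler in at least one case.

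For the syntactic half I would argue by structural induction on $A$. The enabling observation is that in $\Trans{A}{\Kolm{o}}$ every immediate subformula again begins with a double negation: an atom translates to $\neg\neg P$, and each connective/quantifier/modality node to $\neg\neg(\cdots)$. Hence, whatever the outermost constructor of $A$ is, $\Trans{A}{\Kolm{o}}$ literally matches the left-hand side of the corresponding reduction --- for instance, if $A = B \awedge C$ then $\Trans{A}{\Kolm{o}} = \neg\neg(\Trans{B}{\Kolm{o}} \awedge \Trans{C}{\Kolm{o}})$ with both arguments doubly negated, which is precisely $\IT{\Kolm{o}}{\awedge}$ applied to doubly-negated inputs. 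Applying the outermost reduction peels off this single outer $\neg\neg$ (or does nothing at all, in the $\oplus$, $\exists x$, atomic cases), reproducing the Gödel--Gentzen transform at that node while leaving the two arguments untouched and still in outer-Kolmogorov form. The induction hypothesis then rewrites those arguments to $\Trans{B}{\Goedel}$ and $\Trans{C}{\Goedel}$, and we obtain $\Trans{A}{\Goedel}$. Because each reduction is local and preserves the Kolmogorov shape of the subformulas it does not touch, the outside-in recursion demanded by Definition~\ref{def-simp-from-outside} is well-defined and terminates with $\Trans{A}{\Goedel}$.

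For the semantic half I would read each reduction off as the left-to-right direction of an equivalence already recorded in Proposition~\ref{not-not-equivalences}: the reductions at $\cwedge$, $\awedge$ and $\lto$ are items $(ix)$, $(x)$ and $(xii)$, the one at $\forall x$ is $(xiii)$, and the one at the modality is $(xv)$ --- all marked valid in $\ILb$, and hence reversible there. Since provable equivalence in $\ILL$ is a congruence with respect to all its connectives, quantifiers and the modality, each such equivalence may be applied inside an arbitrary formula context; chaining them along the outside-in reduction sequence therefore yields $\Trans{A}{\Goedel} \lequiv \Trans{A}{\Kolm{o}}$ over $\ILb$, consistently with the equivalence already granted by Proposition~\ref{prop-sound-double-negation-trans}.

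I do not expect a genuine obstacle here, since the only nontrivial mathematical input --- the five $\ILb$-equivalences --- is supplied in advance by Proposition~\ref{not-not-equivalences}, leaving the proposition to be settled by careful bookkeeping. The one point that genuinely requires attention is the matching step of the syntactic half: one must verify that the outer-Kolmogorov translation always presents doubly-negated arguments, so that the reduction patterns apply verbatim at every node, and that peeling the outer negation does not disturb the subformulas on which the induction recurses. This is exactly what legitimises the prescribed outside-in order of the reductions.
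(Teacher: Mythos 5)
Your proof is correct and follows essentially the same route as the paper's: a structural induction on $A$ resting on the key observation that every outer-Kolmogorov translation $\Trans{A}{\Kolm{o}}$ begins with $\neg\neg$, so that at each node the displayed reduction applies verbatim (or is vacuous at $\oplus$, $\exists x$ and atoms), after which the induction hypothesis handles the subformulas. Your ``semantic half'' --- reading the reductions as items $(ix)$, $(x)$, $(xii)$, $(xiii)$, $(xv)$ of Proposition~\ref{not-not-equivalences} and invoking congruence --- is exactly what the paper's ``Moreover'' clause delegates to that proposition rather than arguing in the proof body.
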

\begin{proof} Let us write $A \mapsto^o B$ when $B$ is obtained from $A$ by systematically applying the reductions $\mapsto$ ``from the outside''. We are going to prove by induction on the logical structure of $A$ that $A^{\Kolm{o}} \mapsto^o \Trans{A}{\Goedel}$. We will make use of the fact that for any formula $A$ there is a formula $A'$ such that $\Trans{A}{\Kolm{o}} = \neg \neg A'$. \\[1mm]
If $A = P$ then $\Trans{A}{\Kolm{o}} = \neg \neg P = \Trans{A}{\Goedel}$, and no reduction needs to be applied. \\[1mm]
If $A = B \oplus C$ then $\Trans{A}{\Kolm{o}} = \neg \neg (\Trans{B}{\Kolm{o}} \oplus \Trans{C}{\Kolm{o}})$. In this case, no reduction is applicable to the whole formula. By induction hypothesis we have that $\Trans{B}{\Kolm{o}} \mapsto^o \Trans{B}{\Goedel}$ and $\Trans{C}{\Kolm{o}} \mapsto^o \Trans{C}{\Goedel}$. Hence,
\[ 
    \Trans{A}{\Kolm{o}} = 
    \neg \neg (\Trans{B}{\Kolm{o}} \oplus \Trans{C}{\Kolm{o}}) \mapsto^o 
    \neg \neg (\Trans{B}{\Goedel} \oplus \Trans{C}{\Goedel}) = 
    (B \oplus C)^{\Goedel} = \Trans{A}{\Goedel}. 
\]
If $A = B \squareOp C$, with $ \squareOp \in \{\cwedge, \awedge, \lto \}$ then 
\[ \Trans{A}{\Kolm{o}} = \neg \neg (\Trans{B}{\Kolm{o}} \squareOp \Trans{C}{\Kolm{o}}) = \neg \neg (\neg \neg B' \squareOp \neg \neg C') \]
for some formulas $B'$ and $C'$. Hence, we can apply one of the reductions to the whole formula so that
\[ 
\Trans{A}{\Kolm{o}} = \neg \neg (\neg \neg B' \squareOp \neg \neg C') \mapsto \neg \neg B' \squareOp \neg \neg C' = \Trans{B}{\Kolm{o}} \squareOp \Trans{C}{\Kolm{o}}.
\]
By induction hypothesis we have that $\Trans{B}{\Kolm{o}} \mapsto^o \Trans{B}{\Goedel}$ and $\Trans{C}{\Kolm{o}} \mapsto^o \Trans{C}{\Goedel}$. Hence,
\[ \Trans{A}{\Kolm{o}} \mapsto \Trans{B}{\Kolm{o}} \squareOp \Trans{C}{\Kolm{o}} \mapsto^o \Trans{B}{\Goedel} \squareOp \Trans{C}{\Goedel} = (B \squareOp C)^{\Goedel} = \Trans{A}{\Goedel}. \]
If $A = \exists x B$ then $\Trans{A}{\Kolm{o}} = \neg \neg \exists x \Trans{B}{\Kolm{o}}$. In this case, no reduction is applicable to the whole formula. By induction hypothesis, $\Trans{B}{\Kolm{o}} \mapsto^o \Trans{B}{\Goedel}$. Hence
\[ \Trans{A}{\Kolm{o}} = \neg \neg \exists x \Trans{B}{\Kolm{o}} \mapsto^o \neg \neg \exists x \Trans{B}{\Goedel} = (\exists x B)^{\Goedel} = \Trans{A}{\Goedel}. \]
If $A = Q B$, with $Q \in \{\forall x, \bang\}$,  then $\Trans{A}{\Kolm{o}} = \Trans{(Q B)}{\Kolm{o}} = \neg \neg Q \Trans{B}{\Kolm{o}} = \neg \neg Q \neg \neg B'$, for some formula $B'$. Hence, we can apply one of the reductions to the whole formula so that
\[
\Trans{A}{\Kolm{o}} = \neg \neg Q \neg \neg B' \mapsto Q \neg \neg B' = Q  \Trans{B}{\Kolm{o}} \mapsto^o Q \Trans{B}{\Goedel} = \Trans{A}{\Goedel}. \]
That concludes the proof.
\end{proof}

\begin{obs} Note that the order of in which the reductions are applied is important. Consider 
	\begin{equation} \label{ex3-formula}
        \neg \neg (\neg \neg(\neg \neg P\awedge \neg \neg Q) \cwedge \neg \neg R).
    \end{equation}
	If we applied reductions to the whole formula (from inside) we would get the following (and the process stops):
	\begin{equation} \label{ex3-other}
        \neg \neg( (\neg \neg P\awedge \neg \neg Q)\cwedge \neg \neg R). 
    \end{equation}
	But if we applied the reductions to $(\ref{ex3-formula})$ from  outside we get
	\begin{equation}
        \neg \neg (\neg \neg P\awedge \neg \neg Q) \cwedge \neg \neg R 
    \end{equation}
	and then
	\begin{equation} \label{ex3-gg-trans}
        (\neg \neg P\awedge \neg \neg Q) \cwedge \neg \neg R. 
    \end{equation}
	Although (\ref{ex3-other}) and (\ref{ex3-gg-trans}) are equivalent in $\IL_\bot$, (\ref{ex3-gg-trans}) is certainly simpler than (\ref{ex3-other}). The formula (\ref{ex3-formula}) is the $\Kolm{o}$-translation of $(P \awedge Q) \cwedge R$, and (\ref{ex3-gg-trans}) is the G\"odel-Gentzen  translation of $(P \awedge Q) \cwedge R$ obtained by applying the reductions from the outside.
\end{obs}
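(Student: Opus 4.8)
The plan is to verify the Observation by direct computation, treating the five reductions of Proposition \ref{thm-goedel-simplifies-kol} as rewrite rules and keeping careful track of which rule is applicable at each position of the formula. The feature I would keep in the foreground is that every such rule has the shape $\neg\neg(\neg\neg A \squareOp \neg\neg B) \mapsto \neg\neg A \squareOp \neg\neg B$ (together with the analogues $\neg\neg\forall x\,\neg\neg A \mapsto \forall x\,\neg\neg A$ and $\neg\neg\,\bang{\neg\neg A}\mapsto\bang{\neg\neg A}$): each rule \emph{presupposes} that the immediate subformula(e) sitting under the outer $\neg\neg$ already carry a $\neg\neg$-prefix. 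The whole force of the remark comes from what happens to this precondition under the two orders of application.

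First I would settle the two framing identifications. Computing $\Trans{((P\awedge Q)\cwedge R)}{\Kolm{o}}$ bottom-up from Definition \ref{def:Kolmogorov-translations} gives $\neg\neg P, \neg\neg Q, \neg\neg R$ at the atoms, then $\neg\neg(\neg\neg P \awedge \neg\neg Q)$ for $P\awedge Q$, and finally $\neg\neg(\neg\neg(\neg\neg P \awedge \neg\neg Q)\cwedge\neg\neg R)$, which is exactly $(\ref{ex3-formula})$; computing $\Trans{((P\awedge Q)\cwedge R)}{\Goedel}$ from Definition \ref{def-god}, where the transforms for $\awedge$ and $\cwedge$ carry no outer double negation, gives $(\neg\neg P \awedge \neg\neg Q)\cwedge\neg\neg R$, which is exactly $(\ref{ex3-gg-trans})$. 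Then I would trace the outside-in rewriting: on the whole of $(\ref{ex3-formula})$ the $\cwedge$-rule fires, since its left conjunct $\neg\neg(\neg\neg P \awedge \neg\neg Q)$ does begin with $\neg\neg$, producing the intermediate $\neg\neg(\neg\neg P \awedge \neg\neg Q)\cwedge\neg\neg R$; descending into that left conjunct, the $\awedge$-rule then fires and yields $(\ref{ex3-gg-trans})$, after which no pattern matches. This reproduces the two-step outside-in sequence of the remark.

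The crux, and the part the Observation really turns on, is the inside-out trace, where I must show that $(\ref{ex3-other})$ is produced and that rewriting then halts. Working from the leaves outward, nothing fires at the atoms or at $\neg\neg P, \neg\neg Q, \neg\neg R$, since none of these has the required $\neg\neg(\neg\neg\cdots)$ shape. The first rule that can fire is the $\awedge$-rule on the innermost compound $\neg\neg(\neg\neg P \awedge \neg\neg Q)$; applying it rewrites that subformula to $\neg\neg P \awedge \neg\neg Q$ and turns the whole formula into $\neg\neg((\neg\neg P \awedge \neg\neg Q)\cwedge\neg\neg R)$, namely $(\ref{ex3-other})$. I would then argue that no further rule applies: the only candidate is the outer $\cwedge$-rule, and it requires the left conjunct to be of the form $\neg\neg(\cdots)$, but that conjunct is now $\neg\neg P \awedge \neg\neg Q$ -- the very $\neg\neg$-prefix it needed was the one just consumed by the inner step. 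Hence the inside-out process stops at $(\ref{ex3-other})$. This is precisely the order-sensitivity being asserted: in $(\ref{ex3-formula})$ both the outer $\cwedge$-rule and the inner $\awedge$-rule are enabled; outside-in consumes the outer $\neg\neg$ first, leaving the inner reduction to be carried out afterwards on the exposed conjunct, whereas inside-out performs the inner reduction first and thereby destroys the prefix on which the outer rule depended.

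Finally I would record that $(\ref{ex3-other})$ and $(\ref{ex3-gg-trans})$ are $\ILb$-equivalent while $(\ref{ex3-gg-trans})$ is the simpler form. The quickest route is the reversibility already noted in Proposition \ref{thm-goedel-simplifies-kol}: each reduction is an $\ILb$-equivalence, and both $(\ref{ex3-other})$ and $(\ref{ex3-gg-trans})$ are reached from $(\ref{ex3-formula})$ by subsets of these reductions, so both are $\ILb$-equivalent to $(\ref{ex3-formula})$ and hence to each other; alternatively one rewrites the conjunct $\neg\neg P \awedge \neg\neg Q$ as $\neg\neg(P\awedge Q)$ using Proposition \ref{not-not-equivalences}$(x)$ and $(ii)$ and then strips the outer $\neg\neg$ via $(ix)$. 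That $(\ref{ex3-gg-trans})$ is simpler is immediate, as it carries one fewer double negation (the outermost) than $(\ref{ex3-other})$. I expect no genuine technical obstacle here; the only care needed is expository -- stating the pattern-matching precondition of the reductions explicitly enough that the halting of the inside-out process reads as a failure of that precondition rather than as an omission.
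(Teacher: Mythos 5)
Your verification is correct and takes essentially the same route as the paper, which states this Observation as a direct computation without a separate proof: your framing identifications of $(\ref{ex3-formula})$ as $\Trans{((P\awedge Q)\cwedge R)}{\Kolm{o}}$ and $(\ref{ex3-gg-trans})$ as $\Trans{((P\awedge Q)\cwedge R)}{\Goedel}$, the two-step outside-in trace, the one-step inside-out trace, and the $\ILb$-equivalence via the reversibility noted in Proposition~\ref{thm-goedel-simplifies-kol} all check out. Your explicit point that the inner $\awedge$-reduction consumes the $\neg\neg$-prefix on which the outer $\cwedge$-rule's pattern depends is exactly the right way to make precise the paper's parenthetical ``and the process stops''.
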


\begin{obs}[Maximality of simplifications] As in \cite{ol,ol2}, we could also argue that $\Trans{(\cdot)}{\Goedel}$ is not just a simplification from outside of $\Trans{(\cdot)}{\Kolm{o}}$, but it is actually a \emph{maximal simplification}, in the sense that simplifying the clauses for additive disjunction and existential quantifiers would not be possible (see Proposition \ref{not-not-equivalences}). In order to keep this paper as concise as possible we will not discuss the issue of maximality of the simplifications any further, but the reader will be able to verify, using Propositions \ref{not-not-equivalences} and \ref{bang-equivalences}, that in all cases, the set of simplifications we are using is indeed maximal.
\end{obs}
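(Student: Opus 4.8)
The plan is to establish \emph{maximality} by pairing each clause of $\Trans{(\cdot)}{\Goedel}$ that still carries an outer double negation with a specific equivalence \emph{failure} recorded in Proposition \ref{not-not-equivalences}. First I would observe that maximality need only be checked on two clauses. Among the per-connective and per-quantifier outer-transforms, the G\"odel--Gentzen translation retains a double negation only in the clauses for $\oplus$ (namely $\neg \neg (A \oplus B)$) and $\exists$ (namely $\neg \neg \exists x A$); all of its other transforms ($\cwedge$, $\awedge$, $\lto$, $\forall$ and $\bang$) are already the identity, so there is nothing left to simplify there. The atomic base case $\Trans{P}{\Goedel} = \neg \neg P$ is fixed by the very definition of a modular outer $\neg \neg$-translation and so is not a candidate for simplification either. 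Hence a \emph{proper} further simplification would have to drop the double negation in at least one of the $\oplus$- or $\exists$-clauses, and maximality amounts to showing that neither such reduction preserves $\ILb$-equivalence with $\Trans{(\cdot)}{\Kolm{o}}$ (equivalently, by Proposition \ref{thm-goedel-simplifies-kol}, with $\Trans{(\cdot)}{\Goedel}$).

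For the additive disjunction, the only two $\oplus$-outer-transforms available to a modular outer $\neg \neg$-translation are $\neg \neg (A \oplus B)$ and $A \oplus B$, so a strictly simpler translation $\genericTransPrime{(\cdot)}$ would have to use the latter. To rule this out I would take the witness $P \oplus Q$ with $P, Q$ atomic. Since atoms map to $\neg \neg P$ and $\neg \neg Q$ in \emph{any} modular outer $\neg \neg$-translation, such a $\genericTransPrime{(\cdot)}$ yields $\neg \neg P \oplus \neg \neg Q$, whereas $\Trans{(P \oplus Q)}{\Goedel} = \neg \neg (\neg \neg P \oplus \neg \neg Q)$. By Proposition \ref{not-not-equivalences}$(xi)$ these are not $\ILb$-equivalent, so $\genericTransPrime{(\cdot)}$ is not equivalent to $\Trans{(\cdot)}{\Goedel}$ over $\ILb$ and hence is not a valid simplification. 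The existential quantifier is handled identically with the witness $\exists x P$, $P$ atomic: the candidate transform $\exists x A$ yields $\exists x \neg \neg P$, while $\Trans{(\exists x P)}{\Goedel} = \neg \neg \exists x \neg \neg P$, and these are not $\ILb$-equivalent by Proposition \ref{not-not-equivalences}$(xiv)$. Combining the two cases, no proper further simplification exists, so $\Trans{(\cdot)}{\Goedel}$ is a maximal simplification from outside of $\Trans{(\cdot)}{\Kolm{o}}$.

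The argument is short because its entire content is packaged into the tabulated failures $(xi)$ and $(xiv)$; the main thing to get right is the meaning of ``maximal'', which is relative to the class of modular outer $\neg \neg$-translations, where each connective admits only a finite menu of transforms. Fixing this, ``cannot simplify the $\oplus$-clause'' means precisely ``replacing $\neg \neg (A \oplus B)$ by its unique simpler alternative $A \oplus B$ breaks $\ILb$-equivalence''. The only care needed is to choose witnesses (here $\oplus$ and $\exists$ applied directly to atoms) that isolate exactly the transform under scrutiny, so that non-equivalence of the full translations follows at once from non-equivalence of the relevant instances. The same recipe --- matching each retained double negation to a failing equivalence --- yields maximality for the Kuroda translation (whose sole retained inner negation, at $\forall$, is protected by the failure of $(vi)$) and, with Proposition \ref{bang-equivalences} in place of Proposition \ref{not-not-equivalences}, for the Girard translations treated later in the paper.
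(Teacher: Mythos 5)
Your verification is correct and is essentially the argument the paper intends but leaves to the reader: the observation is stated without proof, deferring to Propositions \ref{not-not-equivalences} and \ref{bang-equivalences}, and your method of pairing each retained double negation with a tabulated failure --- $(xi)$ for $\oplus$ and $(xiv)$ for $\exists$ at atomic witnesses, after correctly noting that the atomic clause is fixed by the definition of a modular outer $\neg\neg$-translation and that each clause admits only the two outer transforms --- is exactly the intended check, including the parallel remark for Kuroda via $(vi)$. The one point worth making explicit is that the failures in Proposition \ref{not-not-equivalences} are schematic, but since $\ILb$-provability is closed under uniform substitution of formulas for atoms, failure of a schema already forces failure at your atomic instances, so your choice of witnesses is sound.
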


\subsection{$\Trans{(\cdot)}{\Kuroda}$ is a simplification from inside of $\Trans{(\cdot)}{\Kolm{i}}$}

A similar phenomena holds for the inner presentation of the Kolmogorov translation $\Trans{(\cdot)}{\Kolm{i}}$ and Kuroda's translation $\Trans{(\cdot)}{\Kuroda}$.

\begin{prop}[$\Trans{(\cdot)}{\Kuroda}$ is a simplification from inside of $\Trans{(\cdot)}{\Kolm{i}}$] \label{prop-kur-simplifies-kolm} For any formula $A$, one can obtain $\Trans{A}{\Kuroda}$ from $\Trans{A}{\Kolm{i}}$ by systematically applying the following formula reductions starting from the atomic formulas and inductively applying these to composite formulas:
	\[
	\begin{array}{rcl}
		\underbrace{\neg \neg (\neg \neg A \cwedge \neg \neg B)}_{\neg \neg \IT{\Kolm{i}}{\cwedge}(A, B)} 
		& \quad \mapsto \quad & 
		\underbrace{\neg \neg (A \cwedge B)}_{\neg \neg \IT{\Kuroda}{\cwedge}(A, B)} \\[3mm]
		    \underbrace{\neg \neg (\neg \neg A \awedge \neg \neg B)}_{\neg \neg \IT{\Kolm{i}}{\awedge}(A, B)} 
		       & \quad \mapsto \quad & 
		      \underbrace{\neg \neg (A \awedge B)}_{\neg \neg \IT{\Kuroda}{\awedge}(A, B)} \\[1mm]
		\underbrace{\neg \neg (\neg \neg A \oplus \neg \neg B)}_{\neg \neg \IT{\Kolm{i}}{\oplus}(A, B)} 
		& \quad \mapsto \quad & 
		\underbrace{\neg \neg (A \oplus B)}_{\neg \neg \IT{\Kuroda}{\oplus}(A, B)} \\[3mm]
		\underbrace{\neg \neg (\neg \neg A \lto \neg \neg B)}_{\neg \neg \IT{\Kolm{i}}{\lto}(A, B)} 
		& \quad \mapsto \quad & 
		\underbrace{\neg \neg (A \lto B)}_{\neg \neg \IT{\Kuroda}{\lto}(A, B)} \\[3mm]
		\underbrace{\neg \neg \exists x \neg \neg A}_{\neg \neg \IT{\Kolm{i}}{\exists x}(A)}
		& \quad \mapsto \quad &
		\underbrace{\neg \neg \exists x A}_{\neg \neg \IT{\Kuroda}{\exists x}(A)}
  \\[3mm]
		\underbrace{\neg \neg \bang \neg \neg A}_{\neg \neg \IT{\Kolm{i}}{!}(A)}
		& \quad \mapsto \quad &
		\underbrace{\neg \neg \bang A}_{\neg \neg \IT{\Kuroda}{!}(A)}.
	\end{array}
	\]
    Hence, $\Trans{(\cdot)}{\Kuroda}$ is a simplification from inside of $\Trans{(\cdot)}{\Kolm{i}}$ -- in the sense of Definition \ref{def-simp-from-inside}.
    Moreover, by Proposition \ref{not-not-equivalences}, the reductions above are reversible in $\IL_\bot$, so that $\Trans{A}{\Kuroda}$ and $\Trans{A}{\Kolm{i}}$ are $\IL_\bot$-equivalent.
\end{prop}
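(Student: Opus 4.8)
The plan is to mirror the argument used for Proposition \ref{thm-goedel-simplifies-kol}, but to run the induction \emph{bottom-up} rather than top-down, reflecting the fact that we are now simplifying \emph{from inside}. Write $A \mapsto^i B$ to mean that $B$ is obtained from $A$ by applying the six reductions displayed above, always rewriting innermost redexes first. Recalling that $\Trans{A}{\Kolm{i}} \equiv \neg \neg \Tr{A}{\Kolm{i}}$ and $\Trans{A}{\Kuroda} \equiv \neg \neg \Tr{A}{\Kuroda}$, the statement to establish is precisely $\neg \neg \Tr{A}{\Kolm{i}} \mapsto^i \neg \neg \Tr{A}{\Kuroda}$, which I would prove by induction on the logical structure of $A$.

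For the base case $A = P$ atomic we have $\Tr{P}{\Kolm{i}} = P = \Tr{P}{\Kuroda}$, so $\neg \neg P$ needs no reduction. For a composite formula I would first apply the induction hypothesis to each immediate subformula -- these are the innermost rewrites -- and only afterwards apply at most one reduction at the top level. Concretely, for $A = B \squareOp C$ with $\squareOp \in \{\cwedge, \awedge, \oplus, \lto\}$ we have $\neg \neg \Tr{A}{\Kolm{i}} = \neg \neg (\neg \neg \Tr{B}{\Kolm{i}} \squareOp \neg \neg \Tr{C}{\Kolm{i}})$; the induction hypothesis rewrites the two inner occurrences $\neg \neg \Tr{B}{\Kolm{i}}$ and $\neg \neg \Tr{C}{\Kolm{i}}$ into $\neg \neg \Tr{B}{\Kuroda}$ and $\neg \neg \Tr{C}{\Kuroda}$, and the matching top-level reduction then yields $\neg \neg (\Tr{B}{\Kuroda} \squareOp \Tr{C}{\Kuroda}) = \neg \neg \Tr{A}{\Kuroda}$. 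The cases $A = \exists x B$ and $A = \bang B$ are analogous, applying the reductions $\neg \neg \exists x \neg \neg A \mapsto \neg \neg \exists x A$ and $\neg \neg \bang \neg \neg A \mapsto \neg \neg \bang A$ after the inner rewrite. The case $A = \forall x B$ requires no top-level reduction at all, since $\IT{\Kolm{i}}{\forall x}$ and $\IT{\Kuroda}{\forall x}$ coincide (both are $\forall x \neg \neg (\cdot)$); here one merely inherits the inner rewrite from the induction hypothesis. This is exactly why no reduction for $\forall$ appears in the displayed list.

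The one point that needs care -- and the genuine content distinguishing this from Proposition \ref{thm-goedel-simplifies-kol} -- is the \emph{order} of rewriting. Because each top-level reduction $\neg \neg (\neg \neg A \squareOp \neg \neg B) \mapsto \neg \neg (A \squareOp B)$ strips only the double negations sitting immediately below the outer $\neg \neg$, it applies to a given node only after that node's subformulas have themselves been brought into Kuroda form; rewriting the whole formula first would in general get stuck on a term still carrying double negations deeper inside, failing to reach $\Trans{A}{\Kuroda}$. This is the exact dual of the observation following Proposition \ref{thm-goedel-simplifies-kol}, where the \emph{outer} reductions had to be applied outermost-first. Running the rewriting innermost-first is therefore essential, and it is precisely the discipline demanded by Definition \ref{def-simp-from-inside}. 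Finally, for the ``Moreover'' clause I would note that the six reductions are instances of the $\ILb$-valid equivalences $(i)$, $(ii)$, $(iii)$, $(iv)$, $(vii)$ and $(viii)$ of Proposition \ref{not-not-equivalences}; since all of these hold in $\ILb$ the reductions are reversible there, whence $\Trans{A}{\Kuroda}$ and $\Trans{A}{\Kolm{i}}$ are $\ILb$-equivalent.
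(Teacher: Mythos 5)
Your proof is correct and takes essentially the same route as the paper: the paper's proof likewise defines $A \mapsto^i B$ and shows by induction on $A$ that $\Trans{A}{\Kolm{i}} \mapsto^i \Trans{A}{\Kuroda}$, deferring the details to the proof of Proposition \ref{thm-goedel-simplifies-kol}. Your write-up merely makes explicit what the paper leaves implicit -- the case analysis (including the observation that $\forall$ needs no top-level reduction), the necessity of the innermost-first ordering, and the identification of the relevant $\ILb$-equivalences $(i)$--$(iv)$, $(vii)$, $(viii)$ of Proposition \ref{not-not-equivalences} for reversibility.
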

\begin{proof} The proof is similar to that of Proposition \ref{thm-goedel-simplifies-kol}, except that we define $A \mapsto^i B$, meaning that $B$ is obtained from $A$ by systematically applying the reductions $\mapsto$ ``from the inside'', and then show by induction on $A$ that $\Trans{A}{\Kolm{i}}  \mapsto^i \Trans{A}{\Kuroda}$. 
\end{proof}


\section{Translations from $\ILb$ to $\ILL$}
\label{sec-il-to-ill}

We have seen that we can translate $\IL$ into an extension of $\ILL$, which we called $\ILb = \ILL + \PRO$ (cf. Definition \ref{dagger-trans} and Proposition \ref{prop-IL}). In $\ILb$, any formula $A$ is equivalent to $\bang A$, i.e. we have $\bang A \lequiv A$. In $\ILL$, however, we do not have this equivalence in general, but we do have $\bang \bang A \lequiv \, \bang A$. 



\subsection{Girard translations}\label{gt}

\newcommand{\Girard}[1]{\textup{Gf}_{#1}}

So, an immediate approach towards a translation 
of $\ILb$ into $\ILL$ is to insert $\bang$ in all subformulas of a formula, including the formula itself. As with the Kolmogorov translation, we can do this in two equivalent ways:

\begin{defn}[Girard full translation] \label{def-Girard-full} In this case we are working with $T(A) =\;\bang A$. Again, there are two ways to present this inductively, which we will call $\Girard{o}$ and $\Girard{i}$. In the first case we make use of outer transforms $\Transform_Q^{\Girard{o}}(A) =\;\bang Q A$, for $Q \in \{ \forall x , \exists x , \bang \}$, and $\Transform_\Box^{\Girard{o}}(A, B) =\;\bang (A \squareOp B)$, for $\Box \in \{ \cwedge, \awedge, \oplus, \lto \}$, and define the modular outer $\bang$-translation $A^{\Girard{o}}$ inductively as
	\[
	\begin{array}{rclrcl}
		\Trans{(A \cwedge B)}{\Girard{o}} 
		& \pdefin & \bang (\Trans{A}{\Girard{o}} \cwedge \Trans{B}{\Girard{o}}) &
		\Trans{P}{\Girard{o}} 
		& \pdefin & \bang P, 
		\textup{~for $P$ atomic} \\[1mm]
		\Trans{(A \awedge B)}{\Girard{o}} 
		& \pdefin & \bang (\Trans{A}{\Girard{o}} \awedge \Trans{B}{\Girard{o}}) & 
		\Trans{(\forall x A)}{\Girard{o}}
		& \pdefin & \bang \forall x \Trans{A}{\Girard{o}} \\[1mm]
		\Trans{(A \oplus B)}{\Girard{o}}
		& \pdefin & \bang (\Trans{A}{\Girard{o}} \oplus \Trans{B}{\Girard{o}}) &
		\Trans{(\exists x A)}{\Girard{o}}
		& \pdefin & \bang \exists x \Trans{A}{\Girard{o}} \\[1mm]
		\Trans{(A \lto B)}{\Girard{o}} 
		& \pdefin & \bang (\Trans{A}{\Girard{o}} \lto \Trans{B}{\Girard{o}}) \quad \quad &
		\Trans{(\bang A)}{\Girard{o}} 
		& \pdefin & \bang \bang \Trans{A}{\Girard{o}}.
	\end{array}
	\]
	Alternatively, we can make use of inner transforms $\Transform_Q^{\Girard{i}}(A) = Q \bang A$, for $Q \in \{ \forall x , \exists x , \bang \}$, and $\Transform_\Box^{\Girard{i}}(A, B) =\;\bang A \squareOp \bang B$, for $\Box \in \{ \cwedge, \awedge, \oplus, \lto \}$, so that we obtain the modular inner $\bang$-translation
	\eqleft{\Trans{A}{\Girard{i}} \pdefin\;\bang \Tr{A}{\Girard{i}}}
	where $\Tr{A}{\Girard{i}}$ is defined inductively as:
	\[
	\begin{array}{rclrcl}
		\Tr{(A \cwedge B)}{\Girard{i}} 
		& \pdefin & \bang \Tr{A}{\Girard{i}} \cwedge \; \bang \Tr{B}{\Girard{i}} &
		\Tr{P}{\Girard{i}} 
		& \pdefin & P, 
		\textup{~for $P$ atomic} \\[1mm]
		\Tr{(A \awedge B)}{\Girard{i}} 
		& \pdefin & \bang \Tr{A}{\Girard{i}} \awedge \bang \Tr{B}{\Girard{i}} & 
		\Tr{(\forall x A)}{\Girard{i}}
		& \pdefin & \forall x \bang \Tr{A}{\Girard{i}} \\[1mm]
		\Tr{(A \oplus B)}{\Girard{i}}
		& \pdefin & \bang \Tr{A}{\Girard{i}} \oplus \; \bang \Tr{B}{\Girard{i}} &
		\Tr{(\exists x A)}{\Girard{i}}
		& \pdefin & \exists x \bang \Tr{A}{\Girard{i}} \\[1mm]
		\Tr{(A \lto B)}{\Girard{i}} 
		& \pdefin & \bang \Tr{A}{\Girard{i}} \lto \; \bang \Tr{B}{\Girard{i}} \quad \quad &
		\Tr{(\bang A)}{\Girard{i}} 
		& \pdefin & \bang \bang \Tr{A}{\Girard{i}}.
	\end{array}
	\]
\end{defn}




%





These two translations are easily seen to be equivalent since $\Trans{A}{\Girard{o}}$ is actually syntactically equal to $\Trans{A}{\Girard{i}}$. Not surprisingly, these are `unpolished' translations. Again it is possible to simplify them, by removing some \emph{bangs}, obtaining two other simpler but equivalent translations.

\newcommand{\GvTrans}[1]{\Trans{#1}{\circ}}
\newcommand{\GvTr}[1]{\Tr{#1}{\circ}}
\newcommand{\GnTrans}[1]{\Trans{#1}{*}}
\newcommand{\GnTr}[1]{\Tr{#1}{*}}

\begin{defn}[The call-by-value translation $\GnTrans{(\cdot)}$, \cite{GirTCS}] \label{star} Girard's call-by-value translation $\GnTrans{(\cdot)}$ is a modular outer $\bang$-translation defined by $\bang$-based outer transforms
	\[
	\begin{array}{rclrcl}
		\IT{*}{\cwedge}(A, B) 
		& \pdefin & A \cwedge B &  &  & \\[1mm]
		\IT{*}{\awedge}(A, B) 
		& \pdefin & \bang (A \awedge B) & 
		\IT{*}{\forall x} (A)
		& \pdefin & \bang \forall x A \\[1mm]
		\IT{*}{\oplus}(A, B)
		& \pdefin & A \oplus B \quad &
		\IT{*}{\exists x} (A)
		& \pdefin & \exists x A \\[1mm]
		\IT{*}{\lto}(A, B) 
		& \pdefin & \bang (A \lto B) \quad &
		\IT{*}{!}(A) 
		& \pdefin & \bang A.
	\end{array}
	\]
	Hence, $\GnTrans{A}$ is defined inductively as:
	\[
	\begin{array}{rclrcl}
		\GnTrans{(A \cwedge B)} & \pdefin & \GnTrans{A} \cwedge \GnTrans{B} & 
		\GnTrans{P} & \pdefin & \bang P, \textup{~for $P$ atomic} \\[1mm]
		\GnTrans{(A \awedge B)} & \pdefin & \bang (\GnTrans{A} \awedge \GnTrans{B})
		&
		\GnTrans{(\forall x A)} & \pdefin & \bang  \forall x \GnTrans{A} \\[1mm]
		\GnTrans{(A \oplus B)} & \pdefin & \GnTrans{A} \oplus \GnTrans{B}
		&
		\GnTrans{(\exists x A)} & \pdefin & \exists x \GnTrans{A} \\[1mm]
		\GnTrans{(A \lto B)} & \pdefin & \bang (\GnTrans{A} \lto \GnTrans{B}) \quad
		&
		\GnTrans{(\bang A)} & \pdefin & \bang \GnTrans{A}.
	\end{array}
	\]
\end{defn}

%
%
%
%
%


\begin{defn}[The call-by-name translation $\GvTrans{(\cdot)}$, \cite{GirTCS}] \label{circ} Girard's call-by-name translation is a modular inner $\bang$-translation defined by $\bang$-based inner transforms
	\[
	\begin{array}{rclrcl}
		\IT{\circ}{\cwedge}(A, B) 
		& \pdefin & \bang A \, \cwedge \, \bang B &  &  & \\[1mm]
		\IT{\circ}{\awedge}(A, B) 
		& \pdefin & A \awedge B & 
		\IT{\circ}{\forall x} (A)
		& \pdefin & \forall x A \\[1mm]
		\IT{\circ}{\oplus}(A, B)
		& \pdefin & \bang A \, \oplus \, \bang B \quad &
		\IT{\circ}{\exists x} (A)
		& \pdefin & \exists x \bang A \\[1mm]
		\IT{\circ}{\lto}(A, B) 
		& \pdefin & \bang A \lto B \quad \quad &
		\IT{\circ}{!}(A) 
		& \pdefin & \bang A.
	\end{array}
	\]
	Hence, $\GvTrans{A} =~\bang \GvTr{A}$, where $\GvTr{A}$ is defined inductively as:
	\[
	\begin{array}{rclrcl}
		\GvTr{(A \cwedge B)} & \pdefin & \bang \GvTr{A} \cwedge \, \bang \GvTr{B} &
		\GvTr{P} & \pdefin & P, \textup{~for $P$ atomic} \\[1mm]
		\GvTr{(A \awedge B)} & \pdefin & \GvTr{A} \awedge \GvTr{B}
		&
		\GvTr{(\forall x A)} & \pdefin & \forall x \GvTr{A} \\[1mm]
		\GvTr{(A \oplus B)} & \pdefin & \bang \GvTr{A} \oplus \, \bang \GvTr{B}
		&
		\GvTr{(\exists x A)} & \pdefin & \exists x \bang \GvTr{A} \\[1mm]
		\GvTr{(A \lto B)} & \pdefin & \bang \GvTr{A} \lto \GvTr{B}
		\quad \quad &
		\GvTr{(\bang A)} & \pdefin & \bang \GvTr{A}.
	\end{array}
	\]
\end{defn}

As in Proposition \ref{prop-sound-double-negation-trans}, we can also see that the above translations are all sound translations of $\ILb$ to $\ILL$. For instance, in the case of Girard's full translation, given a multiset of formulas $\Gamma$ let us denote by $\Trans{\Gamma}{\Girard{o}}$ the result of applying the transformation in question to each formula in $\Gamma$. In order to show that the translation is sound we show by induction on the structure of the proof that if $\Gamma \vdash_{\ILb} A$ then $\Trans{\Gamma}{\Girard{o}} \vdash_{\ILL} \Trans{A}{\Girard{o}}$. For instance, the case of the axiom $\Gamma,0 \vdash_{\ILb} A$, we get $\Trans{\Gamma}{\Girard{o}}, \Trans{0}{\Girard{o}} \vdash_{\ILL} \Trans{A}{\Girard{o}}$ due to the $\ILL$-equivalence $0 \lequiv \; !0$. For the case of ($\otimes$R) we have by the induction hypothesis that $\Trans{\Gamma}{\Girard{o}} \vdash_{\ILL} \Trans{A}{\Girard{o}}$ and  $\Trans{\Delta}{\Girard{o}} \vdash_{\ILL} \Trans{B}{\Girard{o}}$ and hence by ($\otimes$R) that  $\Trans{\Gamma}{\Girard{o}}, \Trans{\Delta}{\Girard{o}} \vdash_{\ILL}\Trans{A} {\Girard{o}} \otimes\Trans{B} {\Girard{o}}$. But, since $\Trans{A} {\Girard{o}}$ and $\Trans{B} {\Girard{o}}$ begin with a $!$, we can use Proposition \ref{bang-equivalences} $(i)$ to obtain $\Trans{\Gamma}{\Girard{o}}, \Trans{\Delta}{\Girard{o}} \vdash_{\ILL} !(\Trans{A} {\Girard{o}} \otimes\Trans{B} {\Girard{o}}) = \Trans{(A \otimes B)}{\Girard{o}}$ as desired. Instances of $\PRO$ can also be seen to be derivable in $\ILL$ due to the extra bangs introduced by the $\Trans{(\cdot)}{\Girard{o}}$-translation.

\subsection{$\GnTrans{(\cdot)}$ is a simplification from outside of $\Trans{(\cdot)}{\Girard{o}}$}\label{so}

Let us first show that $\GnTrans{(\cdot)}$ is a simplification from outside of $\Trans{(\cdot)}{\Girard{o}}$, in a similar way that the G\"odel-Gentzen translation $\Trans{(\cdot)}{\Goedel}$ is a simplification from outside of Kolmogorov's $\Trans{(\cdot)}{\Kolm{o}}$ translation. 

\begin{prop} \label{outsidebang} [$\GnTrans{(\cdot)}$ is a simplification from outside of $\Trans{(\cdot)}{\Girard{o}}$] For any formula $A$ of $\IL$, one can obtain $\GnTrans{A}$ from $\Trans{A}{\Girard{o}}$ by systematically applying the following formula reductions starting from the whole formula and inductively applying these to the subformulas:
	\[
	\begin{array}{rcl}
		\underbrace{\bang (\bang A \cwedge \bang B)}_{ \IT{\Girard{o}}{\cwedge}(\bang A, \bang B)} 
		& \quad \mapsto \quad & 
		\underbrace{\bang A \cwedge \bang B}_{\IT{*}{\cwedge}(\bang A, \bang B)} \\[3mm]
		\underbrace{\bang (\bang A \oplus \bang B)}_{ \IT{\Girard{o}}{\oplus}(\bang A, \bang B)} 
		& \quad \mapsto \quad & 
		\underbrace{ \bang A \oplus \bang B}_{\IT{*}{\oplus}(\bang A, \bang B)} \\[3mm]
  \underbrace{\bang \exists x \bang A}_{\IT{\Girard{o}}{\exists x}(\bang A)}
		& \quad \mapsto \quad &
		\underbrace{ \exists x \bang A}_{ \IT{*}{\exists x}(\bang A)}\\[3mm]
		\underbrace{\bang \bang \bang A}_{ \IT{\Girard{o}}{!}(\bang A)} 
		& \quad \mapsto \quad & 
		\underbrace{\bang \bang A}_{\IT{*}{!}(\bang A)}		
	\end{array}
	\]
	and, moreover, by Proposition \ref{bang-equivalences}, the reductions above are reversible in $\ILL$. 
\end{prop}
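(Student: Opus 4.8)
The plan is to mirror almost verbatim the proof of Proposition \ref{thm-goedel-simplifies-kol}, with the transform $T(\cdot) = \bang(\cdot)$ playing the role previously played by $\neg\neg$, and the target system $\ILL$ replacing $\ILb$. Writing $A \mapsto^o B$ to mean that $B$ is obtained from $A$ by applying the four listed reductions systematically ``from the outside'', I would prove by induction on the logical structure of $A$ that $\Trans{A}{\Girard{o}} \mapsto^o \GnTrans{A}$. The one fact that makes the whole induction go through is that every full Girard translation begins with a bang: a trivial prior induction on $A$ shows that $\Trans{A}{\Girard{o}} = \bang A'$ for some formula $A'$. Consequently, whenever $\Trans{A}{\Girard{o}}$ is a connective or quantifier clause, each immediate subformula translation occurring inside it already carries a leading $\bang$, which is exactly the shape required for the outer reductions to fire.

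For the induction itself, the base case $A = P$ is immediate, since $\Trans{P}{\Girard{o}} = \bang P = \GnTrans{P}$ and no reduction is needed. The composite cases split into two groups. For $\awedge$, $\lto$ and the universal quantifier $\forall$ there is no listed reduction consuming the top-level bang, so I would simply pass the induction hypothesis through the outer $\bang$; for example $\Trans{(B \lto C)}{\Girard{o}} = \bang(\Trans{B}{\Girard{o}} \lto \Trans{C}{\Girard{o}}) \mapsto^o \bang(\GnTrans{B} \lto \GnTrans{C}) = \GnTrans{(B \lto C)}$. For the remaining cases $\cwedge$, $\oplus$, $\exists$ and $\bang$, I would first apply the corresponding top-level reduction, using the leading-bang property of the subformula translations, and only then invoke the induction hypothesis. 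For instance, in the modality case $\Trans{(\bang B)}{\Girard{o}} = \bang \bang \Trans{B}{\Girard{o}} = \bang \bang \bang B'$, to which the reduction $\bang \bang \bang A \mapsto \bang \bang A$ applies, yielding $\bang \bang B' = \bang \Trans{B}{\Girard{o}} \mapsto^o \bang \GnTrans{B} = \GnTrans{(\bang B)}$; the cases $\cwedge$, $\oplus$ and $\exists$ are handled identically using their respective top-level reductions.

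The reversibility claim is then immediate from Proposition \ref{bang-equivalences}: each of the four reductions is an instance of an $\ILL$-valid equivalence, namely $(i)$ for $\cwedge$, $(iii)$ for $\oplus$, $(vi)$ for $\exists$, and $(vii)$ for the modality. Hence $\GnTrans{A}$ and $\Trans{A}{\Girard{o}}$ are $\ILL$-equivalent, and since the transforms of $\GnTrans{(\cdot)}$ are componentwise simpler than those of $\Trans{(\cdot)}{\Girard{o}}$, the former is a simplification from outside of the latter in the sense of Definition \ref{def-simp-from-outside}. It is worth noting that the pattern of which clauses admit simplification is exactly dictated by Proposition \ref{bang-equivalences}: the outer bangs on $\awedge$, $\lto$ and $\forall$ cannot be removed because the corresponding equivalences $(ii)$, $(iv)$ and $(v)$ fail in $\ILL$, whereas those on $\cwedge$, $\oplus$, $\exists$ and $\bang$ can, which is why $\GnTrans{(\cdot)}$ keeps precisely the former. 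I do not expect a genuine obstacle here, since once the leading-bang lemma is in place the argument is essentially bookkeeping; the only point needing a little care is the modality clause, where the full translation produces the triple bang $\bang \bang \bang B'$ and one must confirm that the intended contraction $\IT{\Girard{o}}{!}(\bang A) \mapsto \IT{*}{!}(\bang A)$ is the reduction $\bang \bang \bang A \mapsto \bang \bang A$ rather than any other merging of bangs.
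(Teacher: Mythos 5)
Your proposal is correct and takes essentially the same approach as the paper: the paper's own proof just defines $\mapsto^o$ and says the claim follows by induction on $A$ ``similar to the proof of Proposition \ref{thm-goedel-simplifies-kol}'', which is exactly the induction you spell out, including the key leading-bang fact $\Trans{A}{\Girard{o}} = \; \bang A'$ (the analogue of the paper's $\Trans{A}{\Kolm{o}} = \neg\neg A'$), the case split between clauses with and without a top-level reduction, and the appeal to Proposition \ref{bang-equivalences} $(i)$, $(iii)$, $(vi)$, $(vii)$ for reversibility. Your closing remark on why $\awedge$, $\lto$ and $\forall$ cannot be simplified (items $(ii)$, $(iv)$, $(v)$ failing in $\ILL$) is also accurate and matches the paper's observation on maximality of simplifications.
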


\begin{proof} Let us write $A \mapsto^o B$ when $B$ is obtained from $A$ by systematically applying the reductions $\mapsto$ listed above ``from the outside''. Similar to the proof of Proposition \ref{thm-goedel-simplifies-kol}, we can show by induction on $A$ that $A^{\Girard{o}} \mapsto^o \GnTrans{A}$. 
\end{proof}

\subsection{$\GvTrans{(\cdot)}$ is a simplification from inside of $\Trans{(\cdot)}{\Girard{i}}$} \label{si}

If, on the other hand, we start with the inner presentation of the Girard translation $\Trans{(\cdot)}{\Girard{i}}$, and systematically simplify it from 'inside', we obtain the call-by-name translation $\GvTrans{(\cdot)}$. 

\begin{prop}\label{simpinside}[$\GvTrans{(\cdot)}$ is a simplification from inside of $\Trans{(\cdot)}{\Girard{i}}$] For any formula $A$, one can obtain $\GvTrans{A}$ from $\Trans{A}{\Girard{i}}$ by systematically applying the following formula reductions starting from the atomic formulas and inductively applying these to composite formulas:
	\[
	\begin{array}{rcl}
		\underbrace{\bang (\bang A \awedge \bang B)}_{\bang \IT{\Girard{i}}{\awedge}(A, B)} 
		& \quad \mapsto \quad & 
		\underbrace{\bang (A \awedge B)}_{\bang \IT{\circ}{\awedge}(A, B)} \\[2mm]
		\underbrace{\bang (\bang A \lto \, \bang B)}_{\bang \IT{\Girard{i}}{\lto}(A, B)} 
		& \quad \mapsto \quad & 
		\underbrace{\bang (\bang A \lto B)}_{\bang \IT{\circ}{\lto}(A, B)} \\[2mm]
  \underbrace{\bang \forall x \bang A}_{\bang \IT{\Girard{i}}{\forall x}(A)}
		& \quad \mapsto \quad &
		\underbrace{\bang \forall x A}_{\bang \IT{\circ}{\forall x}(A)} \\[2mm]
		\underbrace{\bang \bang \bang A}_{\bang \IT{\Girard{i}}{! }(A)} 
		& \quad \mapsto \quad & 
		\underbrace{\bang \bang A}_{\bang \IT{\circ}{! }(A)} 
	\end{array}
	\]
	and, moreover, by Proposition \ref{bang-equivalences}, the reductions above are reversible in $\ILL$. 
\end{prop}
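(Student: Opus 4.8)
The plan is to follow the pattern of Proposition~\ref{prop-kur-simplifies-kolm}. Write $A \mapsto^i B$ to mean that $B$ is obtained from $A$ by systematically applying the four displayed reductions ``from the inside'', that is, rewriting a subterm only after the subterms it encloses have themselves been rewritten (starting at the atomic formulas and working outwards). I would then prove, by induction on the logical structure of $A$, that $\Trans{A}{\Girard{i}} \mapsto^i \GvTrans{A}$. The induction is made smooth by the fact that, in the inner Girard translation, each immediate subformula $B$ of $A$ contributes to $\Trans{A}{\Girard{i}}$ exactly the block $\bang \Tr{B}{\Girard{i}} = \Trans{B}{\Girard{i}}$; hence the induction hypothesis applies verbatim to these blocks sitting inside whatever context surrounds them, using that $\mapsto^i$ is closed under that context. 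The final conclusion that $\GvTrans{(\cdot)}$ is a simplification from inside of $\Trans{(\cdot)}{\Girard{i}}$ in the sense of Definition~\ref{def-simp-from-inside} then follows at once.

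For the base case $A = P$ atomic we have $\Trans{P}{\Girard{i}} = \bang P = \GvTrans{P}$, and no reduction is needed. For the induction step I would split on the outermost symbol of $A$. The key bookkeeping observation is that the four reductions live exactly at the positions where the call-by-name transforms drop a $\bang$ relative to the full ones, namely at $\awedge$, $\lto$, $\forall$ and $\bang$. Consequently, in the cases $A = B \cwedge C$, $A = B \oplus C$ and $A = \exists x\, B$ -- where $\GvTrans{(\cdot)}$ retains all its inner bangs -- no top-level reduction is required: applying the induction hypothesis to the inner blocks already yields $\GvTrans{A}$ syntactically (e.g. $\Trans{(B \cwedge C)}{\Girard{i}} = \bang(\Trans{B}{\Girard{i}} \cwedge \Trans{C}{\Girard{i}}) \mapsto^i \bang(\GvTrans{B} \cwedge \GvTrans{C}) = \GvTrans{(B \cwedge C)}$). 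In the remaining cases $A = B \awedge C$, $A = B \lto C$, $A = \forall x\, B$ and $A = \bang B$, I would first reduce the inner blocks via the induction hypothesis and then fire exactly one top-level reduction (the $\awedge$-, $\lto$-, $\forall$- or $\bang$-reduction, respectively) to reach $\bang \GvTr{A} = \GvTrans{A}$; for $A = \bang B$ one uses $\Trans{(\bang B)}{\Girard{i}} = \bang\bang\Trans{B}{\Girard{i}} \mapsto^i \bang\bang\bang\GvTr{B}$ followed by the reduction $\bang\bang\bang(\cdot) \mapsto \bang\bang(\cdot)$.

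The one point that genuinely needs care -- and the reason the ``from inside'' (atomic-outwards) discipline is essential rather than cosmetic -- is that the left-hand side of every reduction begins with a $\bang$ guarding $\bang$-prefixed arguments, so a reduction can only fire while those arguments are still $\bang$-prefixed. If, in $\bang(\bang \Tr{B}{\Girard{i}} \awedge \bang \Tr{C}{\Girard{i}})$, one eagerly applied the $\awedge$-reduction at the top before simplifying the subblocks, the bangs guarding $\Tr{B}{\Girard{i}}$ and $\Tr{C}{\Girard{i}}$ would be stripped and the subformula reductions could no longer apply, so $\GvTrans{A}$ would not be reached. This is the exact analogue of the order-sensitivity noted in the Observation following Proposition~\ref{thm-goedel-simplifies-kol}. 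Processing the innermost blocks first keeps each block $\bang$-prefixed until its enclosing reduction fires, which is what makes the induction go through. Finally, the reversibility claim is immediate: the four reductions are precisely the $\ILL$-valid equivalences $(ix)$, $(xi)$, $(xii)$ and $(vii)$ of Proposition~\ref{bang-equivalences}, so each $\mapsto$ step is an $\ILL$-equivalence and hence $\Trans{A}{\Girard{i}}$ and $\GvTrans{A}$ are $\ILL$-equivalent.
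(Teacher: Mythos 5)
Your proposal is correct and follows essentially the same route as the paper's (very terse) proof: define $\mapsto^i$, then show by induction on $A$ that $\Trans{A}{\Girard{i}} \mapsto^i \bang\GvTr{A} = \GvTrans{A}$, with reductions firing exactly at $\awedge$, $\lto$, $\forall$ and $\bang$. Your additional observations -- the case analysis, the appeal to Proposition~\ref{bang-equivalences} $(vii)$, $(ix)$, $(xi)$, $(xii)$ for reversibility, and the explanation of why the inside-out order is essential -- are accurate elaborations of what the paper leaves implicit.
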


\begin{proof} Let us write $A \mapsto^i B$ when $B$ is obtained from $A$ by systematically applying the reductions $\mapsto$ listed above `from inside'. By induction on $A$ we can show that $A^{\Girard{i}} \mapsto^i \; \bang \GvTr{A}$. 
\end{proof}

\begin{obs} From the above it follows that $\bang \GvTr{A}$ is $\ILL$-equivalent to $\GnTrans{A}$. It is interesting to compare this situation to that of the translations $(\cdot)^\circ$ and $(\cdot)^\square$ of $\IL$ into S4 which are the analogues of the translations $\GvTrans{(\cdot)}$ and $\GnTrans{(\cdot)}$. Indeed, we have \cite[p.288, Prop. 9.2.2]{basic} that $S4 \vdash \square P^{\circ} \leftrightarrow P^{\square}$.
\end{obs}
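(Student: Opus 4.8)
The plan is to obtain the stated equivalence not by a fresh induction but by chaining together the two simplification results just proved, exploiting the fact that the inner and outer presentations of the full Girard translation coincide. Recall first from Definition \ref{circ} that $\GvTrans{A} =\;\bang \GvTr{A}$, so the formula named in the observation, $\bang \GvTr{A}$, is literally the call-by-name translation $\GvTrans{A}$, and the task reduces to proving that $\GvTrans{A}$ and $\GnTrans{A}$ are $\ILL$-equivalent.

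First I would read off from Proposition \ref{simpinside} that $\GvTrans{A}$ is obtained from $\Trans{A}{\Girard{i}}$ by reductions that are reversible in $\ILL$. Since $\ILL$-provable equivalence is a congruence -- replacing a subformula by an $\ILL$-equivalent one preserves provable equivalence for every connective, quantifier and $\bang$, the contravariance of $\lto$ being harmless because $\lequiv$ is symmetric -- these local reversible steps lift to a global equivalence $\GvTrans{A} \lequiv \Trans{A}{\Girard{i}}$ in $\ILL$. Next I would invoke the syntactic identity $\Trans{A}{\Girard{i}} = \Trans{A}{\Girard{o}}$ recorded immediately after Definition \ref{def-Girard-full}. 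Finally, Proposition \ref{outsidebang} gives in the same manner $\Trans{A}{\Girard{o}} \lequiv \GnTrans{A}$ in $\ILL$. Composing the three links yields
\[
    \bang \GvTr{A} = \GvTrans{A} \; \lequiv \; \Trans{A}{\Girard{i}} = \Trans{A}{\Girard{o}} \; \lequiv \; \GnTrans{A},
\]
where the two occurrences of $=$ are syntactic identities and the two occurrences of $\lequiv$ denote $\ILL$-provable equivalence, so transitivity delivers the claim.

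I do not anticipate a genuine obstacle, since the argument is at bottom transitivity of $\ILL$-equivalence through the shared full translation $\Trans{A}{\Girard{i}} = \Trans{A}{\Girard{o}}$. The one point deserving explicit attention is that all equivalences used must hold in plain $\ILL$ and not merely in $\ILb$; this is precisely what the ``reversible in $\ILL$'' clauses of Propositions \ref{outsidebang} and \ref{simpinside} secure, and they in turn rely on the $\ILL$-valid equivalences of Proposition \ref{bang-equivalences} (for example clause $(i)$ for $\cwedge$ and clause $(xi)$ for $\lto$). As a cross-check one could bypass the chaining and prove $\GvTrans{A} \lequiv \GnTrans{A}$ directly by induction on $A$, matching the clauses of Definitions \ref{circ} and \ref{star} connective by connective through the corresponding clauses of Proposition \ref{bang-equivalences}. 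The closing modal comparison $S4 \vdash \square P^{\circ} \leftrightarrow P^{\square}$ is not proved here but simply quoted from \cite{basic}.
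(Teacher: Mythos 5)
Your proposal is correct and matches the paper's intended reasoning: the observation is meant to follow precisely by chaining the $\ILL$-reversible reductions of Proposition \ref{simpinside} ($\GvTrans{A} \lequiv \Trans{A}{\Girard{i}}$), the syntactic identity $\Trans{A}{\Girard{i}} = \Trans{A}{\Girard{o}}$ noted after Definition \ref{def-Girard-full}, and the $\ILL$-reversible reductions of Proposition \ref{outsidebang} ($\Trans{A}{\Girard{o}} \lequiv \GnTrans{A}$). Your explicit remarks that $\lequiv$ is a congruence (so local reversible steps lift to the whole formula) and that all equivalences used are valid in plain $\ILL$ rather than $\ILb$ are exactly the points the paper leaves implicit.
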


\begin{obs} In the previous study of the Girard's translations (Subsections \ref{gt}, \ref{so}, and \ref{si} above), we considered translations from $\ILb$ to $\ILL$. Note that the whole strategy works equally well if we instead consider translations from $\IL_\bot$ to $\ILZ$. This observation will be used later in Section \ref{sec-cl-to-cll}. 
\end{obs}

\section{Translations from $\CLLb$ to $\ILZ$}
\label{sec-cll-to-ill}



In Section \ref{sec-cl-to-il} we studied different negative translations of $\CLb = \ILL_\bot + \PRO + \DNE$ into $\IL_\bot = \ILL_\bot + \PRO$. In this section we will consider ``linear'' variants of these translations, i.e. translations\footnote{See \cite{Lam95} for an interesting use of a negative translation of classical into intuitionistic linear logic in the definition of a game semantics for $\CLL$. For a comprehensive study of the various negative translations of $\CLL$ into $\ILL$ see \cite{Lau18}, where, in particular, these translations are used to derive conservativity results.} of $\CLLb = \ILL_\bot + \DNE$ into $\ILL_\bot$. In the linear setting, i.e. in the absence of $\PRO$, and hence the equivalence $\bang A \lequiv A$, fewer simplifications are available, so as expected the linear variants of the G\"odel-Gentzen and the Kuroda translations will have more double negations than their standard counterparts. 

\subsection{Linear negative translations}

Let us start by observing that the (modular) Kolmogorov translations of Definition \ref{def:Kolmogorov-translations}, of $\CLb$ into $\IL_\bot$, are also translations of $\CLLb$ into $\ILL_\bot$, the only difference being that we no longer need to validate the promotion axiom schema $\PRO$.

The same cannot be said about the G\"odel-Gentzen and the Kuroda translations of Definitions \ref{def-god} and \ref{def-kur}, as these fail in the linear logic setting. An algebraic analysis of the G\"odel-Gentzen and the Glivenko translations has been given \cite{ao}, and we can obtain some useful counter-models from there. For instance, even though $\neg \neg (P \otimes Q) \lto P \otimes Q$ is provable in $\CLLb$, it is not the case that its G\"odel-Gentzen translation $\neg \neg (\neg \neg P \otimes \neg \neg Q) \lto \neg \neg P \otimes \neg \neg Q$ is provable in $\ILL_\bot$. Similarly, $\neg \neg P \lto P$ is provable in $\CLLb$, but its Kuroda translation $\neg \neg (\neg \neg P \lto P)$ is not provable in $\ILL_\bot$.

In order for these translations to work in the absence of $\PRO$, we need to add extra double negations, leading us to the following ``linear'' variants:


\begin{defn}[Linear Gödel-Gentzen translation] The linear G\"odel-Gentzen negative translation is a modular outer $\neg \neg$-translation defined by $\neg \neg$-based outer transforms:
	\[
	\begin{array}{cclccl}
		\IT{\lGoedel}{\cwedge}(A, B) 
		& \pdefin & \neg \neg (A \cwedge B) &  &  & \\[1mm]
		\IT{\lGoedel}{\awedge}(A, B) 
		& \pdefin & A \awedge B & 
		\IT{\lGoedel}{\forall x} (A)
		& \pdefin & \forall x A \\[1mm]
		\IT{\lGoedel}{\oplus}(A, B)
		& \pdefin & \neg \neg (A \oplus B) \quad &
		\IT{\lGoedel}{\exists x} (A)
		& \pdefin & \neg \neg \exists x A \\[1mm]
		\IT{\lGoedel}{\lto}(A, B) 
		& \pdefin & A \lto B \quad \quad &
		\IT{\lGoedel}{! }(A) 
		& \pdefin & \neg \neg \bang A.
	\end{array}
	\]
	Hence, $\Trans{A}{\lGoedel}$ is defined inductively as:
	\[
	\begin{array}{rclrcl}
		\Trans{(A \cwedge B)}{\lGoedel} 
		& \pdefin & \neg \neg (\Trans{A}{\lGoedel} \cwedge \Trans{B}{\lGoedel}) &
		\Trans{P}{\lGoedel} 
		& \pdefin & \neg \neg P, 
		\textup{~for $P$ atomic} \\[1mm]
		\Trans{(A \awedge B)}{\lGoedel} 
		& \pdefin & \Trans{A}{\lGoedel} \awedge \Trans{B}{\lGoedel} & 
		\Trans{(\forall x A)}{\lGoedel}
		& \pdefin & \forall x \Trans{A}{\lGoedel} \\[1mm]
		\Trans{(A \oplus B)}{\lGoedel}
		& \pdefin & \neg \neg (\Trans{A}{\lGoedel} \oplus \Trans{B}{\lGoedel})  \quad &
		\Trans{(\exists x A)}{\lGoedel}
		& \pdefin & \neg \neg \exists x \Trans{A}{\lGoedel} \\[1mm]
		\Trans{(A \lto B)}{\lGoedel} 
		& \pdefin & \Trans{A}{\lGoedel} \lto \Trans{B}{\lGoedel} &
		\Trans{(\bang A)}{\lGoedel} 
		& \pdefin & \neg \neg \bang \Trans{A}{\lGoedel}.
	\end{array}
	\]
\end{defn}

The Kuroda negative translation also has a linear variant, which is similarly ``less optimal'' (extra double negation in the clauses for additive conjunction, implication and modality) than its standard counterpart of Definition \ref{def-kur}:

\begin{defn}[Linear Kuroda translation] The linear Kuroda negation translation is a modular inner $\neg \neg$-translation defined by $\neg \neg$-based inner transforms:
	\[
	\begin{array}{rclrcl}
		\IT{\lKuroda}{\cwedge}(A, B) 
		& \pdefin & A \cwedge B &  &  & \\[1mm]
		\IT{\lKuroda}{\awedge}(A, B) 
		& \pdefin & \neg \neg A \awedge \neg \neg B \quad & 
		\IT{\lKuroda}{\forall x} (A)
		& \pdefin & \forall x \neg \neg A \\[1mm]
		\IT{\lKuroda}{\oplus}(A, B)
		& \pdefin & A \oplus B &
		\IT{\lKuroda}{\exists x} (A)
		& \pdefin & \exists x A \\[1mm]
		\IT{\lKuroda}{\lto}(A, B) 
		& \pdefin & A \lto \neg \neg B &
		\IT{\lKuroda}{! }(A) 
		& \pdefin & \bang \neg \neg A.
	\end{array}
	\]
	Hence, $\Trans{A}{\lKuroda} \pdefin \neg \neg \Tr{A}{\lKuroda}$, where $\Tr{A}{\lKuroda}$ is defined inductively as:
	\[
	\begin{array}{rclrcl}
		\Tr{(A \cwedge B)}{\lKuroda} 
		& \pdefin & \Tr{A}{\lKuroda} \cwedge \Tr{B}{\lKuroda} &
		\Tr{(P)}{\lKuroda} 
		& \pdefin & P, 
		\textup{~for $P$ atomic} \\[1mm]
		\Tr{(A \awedge B)}{\lKuroda} 
		& \pdefin & \neg \neg \Tr{A}{\lKuroda} \awedge \neg \neg \Tr{B}{\lKuroda} \quad & 
		\Tr{(\forall x A)}{\lKuroda}
		& \pdefin & \forall x \neg \neg \Tr{A}{\lKuroda} \\[1mm]
		\Tr{(A \oplus B)}{\lKuroda}
		& \pdefin & \Tr{A}{\lKuroda} \oplus \Tr{B}{\lKuroda} &
		\Tr{(\exists x A)}{\lKuroda}
		& \pdefin & \exists x \Tr{A}{\lKuroda} \\[1mm]
		\Tr{(A \lto B)}{\lKuroda} 
		& \pdefin & \Tr{A}{\lKuroda} \lto \neg \neg \Tr{B}{\lKuroda} &
		\Tr{(\bang A)}{\lKuroda} 
		& \pdefin & \bang \neg \neg \Tr{A}{\lKuroda}.
	\end{array}
	\]
\end{defn}

\begin{prop}[Linear negative translations] The two Kolmogorov translations (Definition \ref{def:Kolmogorov-translations}), the linear G\"odel-Gentzen, and the linear Kuroda translations are all sound translations from $\CLLb$ to $\ILL_\bot$.
\end{prop}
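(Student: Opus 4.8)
The plan is to reduce the whole statement to the soundness of the Kolmogorov translation and then transport soundness to the two optimised variants via $\ILL$-equivalences. First I would record, as already observed after Definition~\ref{def:Kolmogorov-translations}, that the inner and outer Kolmogorov translations coincide \emph{syntactically}, $\Trans{A}{\Kolm{o}} \equiv \Trans{A}{\Kolm{i}}$; hence the ``two'' Kolmogorov translations need only a single soundness argument. I will treat clauses (i) and (ii) of the definition of a sound translation separately, since clause (ii) is uniform across all four translations while clause (i) is where the real work lies.

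Clause (ii) (equivalence over the source theory $\CLLb$) I expect to be a routine induction on the structure of $A$. In $\CLLb = \ILL + \DNE$ every double negation collapses, $\neg \neg C \lequiv C$, and each of the four translations only inserts $\neg \neg$'s around subformulas while keeping the same connectives, quantifiers and modality. Combining the collapse with the congruence of $\lequiv$ with respect to each logical operation yields $A \lequiv \genericTrans{A}$ over $\CLLb$ simultaneously for the Kolmogorov, linear G\"odel--Gentzen and linear Kuroda translations.

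The substantive part is clause (i) for Kolmogorov: if $\vdash_{\CLLb} A$ then $\vdash_{\ILL} \Trans{A}{\Kolm{i}}$. Here I would reuse the strategy of Proposition~\ref{prop-sound-double-negation-trans} and argue by induction on the $\CLLb$-derivation. The simplification relative to that proposition is exactly that $\PRO$ is no longer a source axiom, so the single axiom case that genuinely used $\PRO$ disappears; what remains is to check that the $\ILL$ logical rules are respected by the translation (routine, since the translated connectives are the original connectives wrapped in double negations) and that the translation of every instance of $\DNE$ is $\ILL$-provable. This last point is the crux, and it rests on the fact that every Kolmogorov-translated formula, being of the shape $\neg \neg(\cdots)$, is $\neg \neg$-stable in $\ILL$, i.e. $\neg \neg \Trans{A}{\Kolm{i}} \lequiv \Trans{A}{\Kolm{i}}$, which follows from the triple-negation law $\neg \neg \neg D \lequiv \neg D$ already available in $\ILL$; this stability is precisely what lets the translated $\DNE$ sequent go through without any appeal to $\PRO$.

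Finally, for the linear G\"odel--Gentzen and linear Kuroda translations I would \emph{not} redo the induction but instead exhibit each as $\ILL$-equivalent to the Kolmogorov translation and invoke the Kolmogorov case. Concretely, the linear G\"odel--Gentzen translation is the simplification from outside (Definition~\ref{def-simp-from-outside}) of the outer Kolmogorov translation obtained by applying only those outer reductions that are reversible \emph{in} $\ILL$ -- namely items $(x)$, $(xii)$, $(xiii)$ of Proposition~\ref{not-not-equivalences} for $\awedge$, $\lto$ and $\forall$ -- while retaining the outer double negations on $\otimes$ and on the exponential (items $(ix)$ and $(xv)$, which fail in $\ILL$) as well as on $\oplus$ and $\exists$; hence $\Trans{A}{\lGoedel} \lequiv \Trans{A}{\Kolm{o}}$ over $\ILL$. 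Dually, the linear Kuroda translation is the simplification from inside (Definition~\ref{def-simp-from-inside}) of the inner Kolmogorov translation using only the $\ILL$-reversible inner reductions $(i)$, $(iii)$, $(v)$, $(vii)$ for $\otimes$, $\oplus$, $\lto$ and $\exists$, retaining the double negations where $(ii)$, $(vi)$, $(viii)$ fail, so that $\Trans{A}{\lKuroda} \lequiv \Trans{A}{\Kolm{i}}$ over $\ILL$. Clause (i) for both then follows: from $\vdash_{\CLLb} A$ we get $\vdash_{\ILL} \Trans{A}{\Kolm{i}}$ (equivalently $\Trans{A}{\Kolm{o}}$), and the $\ILL$-equivalences transport provability to $\Trans{A}{\lGoedel}$ and $\Trans{A}{\lKuroda}$. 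The main obstacle I anticipate is the $\DNE$-axiom case of the Kolmogorov induction, together with the bookkeeping needed to confirm that the reductions used for the linear variants are exactly the $\ILL$-valid items of Proposition~\ref{not-not-equivalences} and no others.
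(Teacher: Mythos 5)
Your proposal is correct, but it is organised differently from the paper's own proof. The paper proves soundness for each translation directly, by induction on the structure of $\CLLb$-derivations (it sketches the linear G\"odel--Gentzen case: the axiom $\Gamma, 0 \vdash_{\CLLb} A$ is handled via $\neg\neg 0 = (0 \lto 0) \lto 0$ and $(\lto\textup{L})$, and the rule $(\cwedge\textup{R})$ via Proposition \ref{not-not-equivalences}$(i)$ together with the fact that every translated formula is $\ILL$-equivalent to one of the form $\neg\neg A'$). You instead run the derivation-induction only once, for the Kolmogorov translation (correctly isolating the $\DNE$-axiom case as the crux, discharged by $\neg\neg$-stability of formulas of the shape $\neg\neg(\cdots)$ via the triple-negation law, which does hold in $\ILL$), and then transport soundness to the linear G\"odel--Gentzen and linear Kuroda translations through formula-level $\ILL$-equivalences. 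Your bookkeeping of which items of Proposition \ref{not-not-equivalences} are used is accurate: $(x)$, $(xii)$, $(xiii)$ give the $\ILL$-reversible outer reductions for $\awedge$, $\lto$, $\forall$, while $(ix)$, $(xi)$, $(xiv)$, $(xv)$ failing in $\ILL$ explains the retained double negations; dually $(i)$, $(iii)$, $(v)$, $(vii)$ versus $(ii)$, $(vi)$, $(viii)$ for the inner case. Note that these equivalences are exactly the content of the two propositions that \emph{follow} this one in the paper; there is no circularity in using them, since they are formula-level facts proved by induction on formula structure, independently of any soundness claim, but your argument does silently require that induction (plus congruence of $\lequiv$ in $\ILL$) as a lemma, so the total work is comparable. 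What your route buys is economy and structure -- a single proof-theoretic induction, with the two linear variants obtained purely algebraically, making the link to the simplification machinery explicit; what the paper's route buys is uniformity with Proposition \ref{prop-sound-double-negation-trans} and self-containedness, since each translation is verified against the sequent rules without appealing to the later simplification results.
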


\begin{proof} Again, as in Proposition \ref{prop-sound-double-negation-trans}, these translations are easily shown to be sound by induction on the structure of proofs. For instance, for the linear G\"odel-Gentzen translation, suppose we have the case of the axiom $\Gamma, 0 \vdash_{\CLLb} A$.  We need to show that $\Trans{\Gamma}{\lGoedel}, \neg\neg 0 \vdash_{\ILL_\bot} \Trans{A}{\lGoedel}  $.  Note first that in $\ILL_\bot$ if we have $\Gamma,A\vdash_{\ILL_\bot} B$ then $\Gamma, \neg A\vdash_{\ILL_\bot} \neg A$ and hence $\Gamma,\neg\neg A\vdash_{\ILL_\bot} \neg\neg B$. Since $\neg\neg 0 = (0\lto \bot)\lto \bot $ this follows from applying the above fact to  $\Trans{\Gamma}{\lGoedel},  0 \vdash_{\ILL_\bot} \Trans{A}{\lGoedel}$ and noting that by Proposition 4 we can show that $\Trans{A}{\lGoedel} \lequiv \neg\neg A'$ for some formula $A'$ and hence that $\neg\neg \Trans{A}{\lGoedel} \lequiv \Trans{A}{\lGoedel}$.  The case of the rule $\otimes$R for instance is shown applying Proposition \ref{not-not-equivalences} $(i)$ and using again the fact that $\Trans{A}{\lGoedel}$ is equivalent to an expression of the form  $\neg\neg A'$, for some formula $A'$.  
\end{proof}

One might ask, how does one know which connectives require these extra double negations? We believe the answer lies in the following results.

\subsection{$\Trans{(\cdot)}{\lGoedel}$ is a simplification from outside of $\Trans{(\cdot)}{\Kolm{o}}$}


\begin{prop}[$\Trans{(\cdot)}{\lGoedel}$ is a simplification from outside of $\Trans{(\cdot)}{\Kolm{o}}$] For any formula $A$, one can obtain $\Trans{A}{\lGoedel}$ from $\Trans{A}{\Kolm{o}}$ by systematically applying the following formula reductions starting from the whole formula and inductively applying these to the subformulas:
	\[
	\begin{array}{rcl}
		\underbrace{\neg \neg (\neg \neg A \awedge \neg \neg B)}_{\IT{\Kolm{o}}{\awedge}(\neg \neg A, \neg \neg B)} 
		& \quad \mapsto \quad & 
		\underbrace{\neg \neg A \awedge \neg \neg B}_{\IT{\lGoedel}{\awedge}(\neg \neg A, \neg \neg B)} \\[3mm]
		 \underbrace{\neg \neg (\neg \neg A \lto \neg \neg B)}_{\IT{\Kolm{o}}{\lto}(\neg \neg A, \neg \neg B)} 
		     & \quad \mapsto \quad & 
		     \underbrace{\neg \neg A \lto \neg \neg B}_{\IT{\lGoedel}{\lto}(\neg \neg A, \neg \neg B)} \\[3mm]
		\underbrace{\neg \neg \forall x \neg \neg A}_{\IT{\Kolm{o}}{\forall x}(\neg \neg A)}
		& \quad \mapsto \quad &
		\underbrace{\forall x \neg \neg A}_{\IT{\lGoedel}{\forall x}(\neg \neg A)}.
	\end{array}
	\]
	%
    Hence, $\Trans{(\cdot)}{\lGoedel}$ is a simplification from outside of $\Trans{(\cdot)}{\Kolm{o}}$, in the sense of Definition \ref{def-simp-from-outside}. Moreover, by Proposition \ref{not-not-equivalences}, the reductions above are reversible in $\ILZ$. 
\end{prop}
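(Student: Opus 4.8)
The plan is to mirror, essentially verbatim, the inductive argument of Proposition \ref{thm-goedel-simplifies-kol}, the only difference being the partition of connectives at which a top-level reduction fires. Writing $A \mapsto^o B$ to mean that $B$ is obtained from $A$ by applying the listed reductions ``from the outside'', I would prove by induction on the logical structure of $A$ that $\Trans{A}{\Kolm{o}} \mapsto^o \Trans{A}{\lGoedel}$. The invariant that makes every case go through is that each $\Trans{A}{\Kolm{o}}$ has the shape $\neg\neg A'$ for some $A'$, which is immediate from Definition \ref{def:Kolmogorov-translations}; it ensures that the immediate subformulas sitting under any connective or quantifier already carry the leading $\neg\neg$ demanded by the left-hand sides of the reductions. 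The transform-by-transform comparison required by Definition \ref{def-simp-from-outside} is then a one-line inspection: each $\lGoedel$-transform either equals the corresponding $\Kolm{o}$-transform (for $\cwedge, \oplus, \exists, \bang{}$) or drops its outer $\neg\neg$ (for $\awedge, \lto, \forall$), so $\Trans{(\cdot)}{\lGoedel}$ is indeed a simplification of $\Trans{(\cdot)}{\Kolm{o}}$.

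For the inductive step I would split the cases according to whether the linear G\"odel-Gentzen translation keeps the outer double negation. On the ``kept'' group $\cwedge, \oplus, \exists, \bang{}$ no reduction applies at the root: e.g.\ for $A = B \cwedge C$ we have $\Trans{A}{\Kolm{o}} = \neg\neg(\Trans{B}{\Kolm{o}} \cwedge \Trans{C}{\Kolm{o}})$, and the induction hypothesis pushed into the two subformulas yields $\neg\neg(\Trans{B}{\lGoedel} \cwedge \Trans{C}{\lGoedel}) = \Trans{A}{\lGoedel}$, with $\oplus$, $\exists$ and the exponential handled identically. On the ``dropped'' group $\awedge, \lto, \forall$ one first fires the matching reduction at the root --- for $A = B \lto C$, the invariant lets us write $\Trans{A}{\Kolm{o}} = \neg\neg(\neg\neg B' \lto \neg\neg C')$, which reduces to $\neg\neg B' \lto \neg\neg C' = \Trans{B}{\Kolm{o}} \lto \Trans{C}{\Kolm{o}}$ --- and then applies the induction hypothesis to reach $\Trans{B}{\lGoedel} \lto \Trans{C}{\lGoedel} = \Trans{A}{\lGoedel}$; the $\awedge$ and $\forall$ cases are the same. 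The base case is trivial, since $\Trans{P}{\Kolm{o}} = \neg\neg P = \Trans{P}{\lGoedel}$.

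For the ``moreover'' clause I would read the reversibility of each reduction directly off the $\ILL$ column of Proposition \ref{not-not-equivalences}: the $\awedge$, $\lto$ and $\forall$ reductions are valid $\ILL$-equivalences by items $(x)$, $(xii)$ and $(xiii)$, so chaining them gives $\Trans{A}{\Kolm{o}} \lequiv \Trans{A}{\lGoedel}$ over $\ILL$. The argument carries no real computational difficulty; the one point genuinely worth stressing --- and what distinguishes this proposition from its non-linear analogue --- is that we now work in $\ILL$ rather than $\ILb$, and this is precisely what shortens the list of admissible reductions. In Proposition \ref{thm-goedel-simplifies-kol} one could additionally collapse the $\cwedge$ and exponential clauses, but here those collapses are blocked because the corresponding equivalences $(ix)$ and $(xv)$ of Proposition \ref{not-not-equivalences} fail in $\ILL$. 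I would therefore present the retained double negations on $\cwedge$ and on $\bang{}$ not as an accident but as forced by those two failing $\ILL$-equivalences, which is also why the set of reductions listed is maximal.
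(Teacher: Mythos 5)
Your proposal is correct and takes essentially the same route as the paper: the paper's own ``proof'' is just the remark that the argument would be similar to that of Proposition \ref{thm-goedel-simplifies-kol}, only carried out over $\ILL$ where fewer equivalences are available, and your induction --- with the invariant $\Trans{A}{\Kolm{o}} = \neg\neg A'$, root reductions firing exactly at $\awedge$, $\lto$, $\forall$, and reversibility read off items $(x)$, $(xii)$, $(xiii)$ of Proposition \ref{not-not-equivalences} --- is precisely that argument written out in full. Your closing observation that the $\cwedge$ and exponential collapses available in the non-linear case are blocked by the failure of $(ix)$ and $(xv)$ in $\ILL$ matches the paper's stated reason for the shorter reduction list exactly.
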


The proof of the above proposition would be similar to that of Proposition \ref{thm-goedel-simplifies-kol}, except that in here we are working in $\ILZ$, instead of $\IL_\bot$, so fewer equivalences (or simplifications) would be available.


\subsection{$\Trans{(\cdot)}{\lKuroda}$ is a simplification from inside of $\Trans{(\cdot)}{\Kolm{i}}$}

A similar phenomenon holds for the inner presentation of the Kolmogorov translation $\Trans{(\cdot)}{\Kolm{i}}$ and Kuroda's linear translation $\Trans{(\cdot)}{\Kolm{i}}$, where we get a linear version of Proposition \ref{prop-kur-simplifies-kolm}:

\begin{prop}[$\Trans{(\cdot)}{\lKuroda}$ is a simplification from inside of $\Trans{(\cdot)}{\Kolm{i}}$] For any formula $A$, one can obtain $\Trans{A}{\lKuroda}$ from $\Trans{A}{\Kolm{i}}$ by systematically applying the following formula reductions starting from the atomic formulas and inductively applying these to composite formulas:
	\[
	\begin{array}{rcl}
		\underbrace{\neg \neg (\neg \neg A \cwedge \neg \neg B)}_{\neg \neg \IT{\Kolm{i}}{\cwedge}(A, B)} 
		& \quad \mapsto \quad & 
		\underbrace{\neg \neg (A \cwedge B)}_{\neg \neg \IT{\lKuroda}{\cwedge}(A, B)} \\[3mm]
		\underbrace{\neg \neg (\neg \neg A \oplus \neg \neg B)}_{\neg \neg \IT{\Kolm{i}}{\oplus}(A, B)} 
		& \quad \mapsto \quad & 
		\underbrace{\neg \neg (A \oplus B)}_{\neg \neg \IT{\lKuroda}{\oplus}(A, B)} \\[3mm]
		\underbrace{\neg \neg (\neg \neg A \lto \neg \neg B)}_{\neg \neg \IT{\Kolm{i}}{\lto}(A, B)} 
		& \quad \mapsto \quad & 
		\underbrace{\neg \neg (A \lto \neg \neg B)}_{\neg \neg \IT{\lKuroda}{\lto}(A, B)} \\[3mm]
		\underbrace{\neg \neg \exists x \neg \neg A}_{\neg \neg \IT{\Kolm{i}}{\exists x}(A)}
		& \quad \mapsto \quad &
		\underbrace{\neg \neg \exists x A}_{\neg \neg \IT{\lKuroda}{\exists x}(A)}.
	\end{array}
	\]
	%
    Hence, $\Trans{(\cdot)}{\lKuroda}$ is a simplification from inside of $\Trans{(\cdot)}{\Kolm{i}}$, in the sense of Definition \ref{def-simp-from-inside}. Moreover, by Proposition \ref{not-not-equivalences}, the reductions above are reversible in $\ILZ$. 
\end{prop}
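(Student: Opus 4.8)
The plan is to mirror the argument for Proposition~\ref{prop-kur-simplifies-kolm} (itself patterned on Proposition~\ref{thm-goedel-simplifies-kol}), now carried out over $\ILL$ rather than $\ILb$. Writing $A \mapsto^i B$ to mean that $B$ is obtained from $A$ by applying the four listed reductions \emph{from the inside}---starting at the atoms and working outward---I would prove by induction on the logical structure of $A$ that $\Trans{A}{\Kolm{i}} \mapsto^i \Trans{A}{\lKuroda}$. The observation that makes the induction go through is that both translations carry a single leading double negation, namely $\Trans{A}{\Kolm{i}} = \neg\neg\Tr{A}{\Kolm{i}}$ and $\Trans{A}{\lKuroda} = \neg\neg\Tr{A}{\lKuroda}$, and that every immediate subformula slot of $\Tr{A}{\Kolm{i}}$ has exactly the shape $\neg\neg\Tr{B}{\Kolm{i}} = \Trans{B}{\Kolm{i}}$---precisely the shape matched by the left-hand sides of the reductions.

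For the base case $A = P$ atomic we have $\Trans{P}{\Kolm{i}} = \neg\neg P = \Trans{P}{\lKuroda}$, and nothing is rewritten. For the inductive step I would split the cases according to whether the principal connective or quantifier admits a reduction. In all cases, the induction hypothesis first rewrites every subformula occurrence $\Trans{B}{\Kolm{i}}$ into $\Trans{B}{\lKuroda} = \neg\neg\Tr{B}{\lKuroda}$, working from the inside. When the principal operator has a reduction---$\cwedge$, $\oplus$, $\lto$, $\exists$---this leaves a redex at the root: e.g.\ for $A = B \lto C$ one reaches
\[ \neg\neg(\neg\neg\Tr{B}{\lKuroda} \lto \neg\neg\Tr{C}{\lKuroda}) \mapsto \neg\neg(\Tr{B}{\lKuroda} \lto \neg\neg\Tr{C}{\lKuroda}) = \neg\neg\Tr{(B \lto C)}{\lKuroda} = \Trans{A}{\lKuroda}, \]
and the $\cwedge$, $\oplus$, $\exists$ cases are identical in spirit. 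When the principal operator has no reduction---$\awedge$, $\forall$, $\bang$---applying the induction hypothesis inside already yields the linear Kuroda form, since there $\IT{\lKuroda}{\awedge}, \IT{\lKuroda}{\forall x}, \IT{\lKuroda}{!}$ coincide with the corresponding inner Kolmogorov transforms (e.g.\ $A = B \awedge C$ produces $\neg\neg(\neg\neg\Tr{B}{\lKuroda} \awedge \neg\neg\Tr{C}{\lKuroda}) = \Trans{A}{\lKuroda}$ with no further step).

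For the ``moreover'' claim I would observe that each of the four reductions is a two-sided $\ILL$-equivalence: read as rewrites they are exactly the instances $(i)$, $(iii)$, $(v)$ and $(vii)$ of Proposition~\ref{not-not-equivalences}, which are the ones marked $\checkmark$ in the $\ILL$ column. Since provable equivalence in $\ILL$ is a congruence (replacement of $\ILL$-equivalents is admissible in every subformula context---regardless of the variance of $\lto$, because $\lequiv$ is two-sided), applying these rewrites in context preserves $\ILL$-equivalence, whence $\Trans{A}{\lKuroda}$ and $\Trans{A}{\Kolm{i}}$ are $\ILL$-equivalent.

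The induction itself is entirely routine; the point worth emphasising---and the only place where care is genuinely needed---is \emph{why} the linear simplifications are only partial. The $\lto$-reduction strips the double negation from the antecedent but must retain it on the consequent, and there is no reduction at all for $\awedge$, $\forall$ or $\bang$. This is dictated by Proposition~\ref{not-not-equivalences}: the fuller simplifications available over $\ILb$ would correspond to the instances $(iv)$, $(ii)$, $(vi)$ and $(viii)$, each of which carries a $\crossmark$ in the $\ILL$ column. Thus the expected obstacle is not any hard step but the bookkeeping of matching each transform against the $\ILL$-equivalences, making sure no reduction is ever applied where the corresponding equivalence fails over $\ILL$.
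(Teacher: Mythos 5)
Your proposal is correct and takes essentially the same route as the paper: the paper handles this proposition by exactly the induction you describe (defining $\mapsto^i$ and showing $\Trans{A}{\Kolm{i}} \mapsto^i \Trans{A}{\lKuroda}$, mirroring the proofs of Propositions \ref{thm-goedel-simplifies-kol} and \ref{prop-kur-simplifies-kolm} but over $\ILL$ where fewer simplifications are available), and your identification of the reversibility of the four reductions with the $\ILL$-valid instances $(i)$, $(iii)$, $(v)$ and $(vii)$ of Proposition \ref{not-not-equivalences} -- and of the blocked simplifications with the $\crossmark$-marked instances $(ii)$, $(iv)$, $(vi)$, $(viii)$ -- is exactly the paper's justification.
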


\section{Translations from $\CLb$ to $\CLLb$}
\label{sec-cl-to-cll}

We can now derive translations from $\CLb$ to $\CLLb$ as compositions of the translations from $\CLb$ to $\IL_\bot$ with those from $\IL_\bot$ to $\ILL_{\bot}$ (via the inclusion of $\ILL_{\bot}$ into $\CLLb$):
\[ 
    \CLb \quad \stackrel{\scriptsize{\mbox{Section \ref{sec-cl-to-il}}}}{\longmapsto} \quad 
    \IL_\bot \quad \stackrel{\scriptsize{\mbox{Section \ref{sec-il-to-ill}}}}{\longmapsto} \quad 
    \ILL_\bot \quad \stackrel{\subset} {\longmapsto} \quad \CLLb.
\]    
%
We are going to consider the compositions of the Kuroda $\Trans{(\cdot)}{\Kuroda}$ and Gödel $\Trans{(\cdot)}{\Goedel}$ translations with the Girard $(\cdot)^*$ and $(\cdot)^\circ$ translations, obtaining four translations of $\CLb$ to $\CLLb$. 




Let $\whynot A$ be an abbreviation for $\neg \bang \neg A$. We will make use of the following result about the Girard translations $(\cdot)^*$ and $(\cdot)^\circ$ :

\begin{lem} Let $(\cdot)^{*}$ and $(\cdot)^{\circ}$ be the Girard translations presented in Definitions \ref{star} and \ref{circ}. For any formula $A$ the following are provable in $\CLLb$
	\begin{itemize}
		\item[$(i)$] $(\neg \neg A)_\circ \lequiv \;  \whynot \bang A_\circ$, and hence $(\neg \neg A)^\circ \lequiv \; \bang  \whynot \bang A_\circ$, and
		\item[$(ii)$] $(\neg \neg A)^* \lequiv \; \bang  \whynot  A^*$.
	\end{itemize}
\end{lem}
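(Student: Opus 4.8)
The plan is to prove both parts by unfolding the two Girard translations on $\neg\neg A = (A \lto 0) \lto 0$ -- recalling that $\neg X$ abbreviates $X \lto 0$ and $\whynot X$ abbreviates $\neg \bang{\neg X}$ -- and then reading off the right-hand sides. The only non-definitional ingredient is the translation of the constant $0$. Reading $0$ as an atomic constant, the call-by-name clause gives $0_\circ = 0$ and the call-by-value clause gives $0^* = \bang{0}$; in either case the result is $\ILL$-equivalent to $0$, using $0 \lequiv \bang{0}$, which is provable directly from $(0\textup{L})$ and $(\bang\textup{L})$.

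For $(i)$, I would apply the call-by-name clause $(X \lto Y)_\circ = \bang{X_\circ} \lto Y_\circ$ twice to $(A \lto 0) \lto 0$, obtaining $(\neg\neg A)_\circ = \bang{(\bang{A_\circ} \lto 0_\circ)} \lto 0_\circ$. Since $0_\circ \lequiv 0$ and provable equivalence is a congruence for $\lto$ and $\bang{(\cdot)}$, this is $\CLLb$-equivalent to $\bang{(\bang{A_\circ} \lto 0)} \lto 0$, which by the definitions of $\neg$ and $\whynot$ is precisely $\whynot \bang{A_\circ}$, giving $(i)$. The ``hence'' clause is then immediate: by definition of the call-by-name translation $(\neg\neg A)^\circ = \bang{(\neg\neg A)_\circ}$, and since $\bang{(\cdot)}$ preserves provable equivalences (from $X \lequiv Y$ one obtains $\bang{X} \vdash \bang{Y}$ by $(\bang\textup{L})$, $(\rulecut)$ and $(\bang\textup{R})$, and symmetrically), applying $\bang{(\cdot)}$ to $(i)$ yields $(\neg\neg A)^\circ \lequiv \bang{\whynot \bang{A_\circ}}$.

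For $(ii)$, I would run the analogous computation with the call-by-value clause $(X \lto Y)^* = \bang{(X^* \lto Y^*)}$. Unfolding on $(A \lto 0) \lto 0$ gives $(\neg\neg A)^* = \bang{(\bang{(A^* \lto 0^*)} \lto 0^*)}$; replacing $0^*$ by the $\ILL$-equivalent $0$ (again via $0 \lequiv \bang{0}$ and congruence) produces $\bang{(\bang{(A^* \lto 0)} \lto 0)}$, which unfolds through the definitions of $\neg$ and $\whynot$ to $\bang{\whynot A^*}$, establishing $(ii)$.

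I expect the only real obstacle to be the careful handling of the $0$-clause: one must confirm that whatever value each translation assigns to the constant $0$ is $\ILL$-equivalent to $0$, and that substituting it inside the nested $\bang{(\cdots)} \lto \cdots$ structure preserves equivalence. Both points reduce to $0 \lequiv \bang{0}$ together with the (standard) congruence of provable equivalence under $\lto$ and $\bang{(\cdot)}$. Everything else is pure unfolding; in particular none of the steps uses $\DNE$, so the equivalences in fact already hold in $\ILL$, and a fortiori in $\CLLb$.
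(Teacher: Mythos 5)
Your proof is correct and takes essentially the same route as the paper's: both unfold the two Girard translations on $(A \lto 0)\lto 0$, treat $0$ as atomic (so $0_\circ = 0$ and $0^* = \bang{0}$), and in the call-by-value case use $0 \lequiv \bang{0}$ together with congruence to absorb the translated $0$'s, with the ``hence'' clause of $(i)$ obtained by applying $\bang{(\cdot)}$ to both sides. Your closing observation that $\DNE$ is never used, so the equivalences already hold in $\ILL$, is a correct (if inessential) sharpening of the statement.
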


\begin{proof} $(i)$ A direct computation yields: 
\[ 
    (\neg\neg A)_\circ 
    = \; \bang ( \bang A_\circ \lto \bot ) \lto \bot 
    = \neg \bang  \neg  \bang   A_\circ  
    = \; \whynot \bang  A_\circ
\] 
which implies $(\neg \neg A)^\circ \lequiv \; \bang  \whynot \bang A_\circ$. $(ii)$ We have: 
\[ 
    (\neg\neg A)^* 
    = \, \bang  ( \bang  (A^* \lto \bot^* ) \lto \bot^*) 
    = \, \bang  ( \bang  (A^* \lto ~ \bang \bot ) \lto ~ \bang \bot).
\]
Using $(x)$ of Proposition 5  we get that 
%
\[ 
    \bang  ( \bang  (A^* \lto ~ \bang \bot ) \lto ~ \bang \bot) 
    \lequiv \, \bang  ( \bang  (A^* \lto  \bot ) \lto  \bot) 
    = \, \bang  \neg \bang  \neg A^* 
    = \, \bang   \whynot   A^* 
\]
which concludes the proof. 
\end{proof}

Before proceeding we state two more lemmas about $\CLL_b  = \ILL_\bot + \DNE$ which will be need later.

\begin{lem}\label{admissible} The following are derivable rules in $\CLLb$:
\[
\begin{array}{ccc}
    \begin{prooftree}
        \bang \Gamma, A \vdash \whynot B
        \Justifies
        \bang \Gamma, \whynot A \vdash \whynot B
    \end{prooftree}
    & \hspace{2cm} &	\begin{prooftree}
        \Gamma \vdash B
        \Justifies
        \Gamma \vdash \whynot B 
    \end{prooftree}
\end{array}
\]
We will use ``double lines'' for one or more steps in a proof.
\end{lem}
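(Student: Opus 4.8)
The plan is to derive both rules directly within $\ILL$, and hence a fortiori within $\CLLb = \ILL + \DNE$; in fact, neither derivation will require the axiom $\DNE$, so both rules are already admissible in plain $\ILL$. I would treat the two rules separately, beginning with the right-hand rule, which is the simpler of the two. Throughout I unfold the abbreviations $\whynot B \equiv \neg \bang \neg B \equiv (\bang \neg B) \lto 0$ and $\whynot A \equiv \neg \bang \neg A \equiv (\bang \neg A) \lto 0$.

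For the right-hand rule it suffices to prove the single-formula version $B \vdash \whynot B$ and then compose it with the premise $\Gamma \vdash B$ by a $\rulecut$ on $B$. To obtain $B \vdash \whynot B$, I apply $(\lto\textup{R})$, reducing the goal to $B, \bang \neg B \vdash 0$. Then dereliction $(\bang\textup{L})$ lets me treat $\bang \neg B$ as $\neg B \equiv B \lto 0$, and a single $(\lto\textup{L})$ against this implication (with premises $B \vdash B$ and $0 \vdash 0$, the latter an instance of $(0\textup{L})$) closes the branch. This establishes $B \vdash \whynot B$, and the cut with $\Gamma \vdash B$ finishes the rule.

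The left-hand rule is the heart of the lemma. Starting from the premise $\bang \Gamma, A \vdash \whynot B$, I would first ``uncurry'' on the right, moving $\bang \neg B$ into the context to get $\bang \Gamma, A, \bang \neg B \vdash 0$ (via $(\lto\textup{L})$ and a $\rulecut$, or simply by reading off the definition of $\whynot B$). Next, I re-abstract $A$ using $(\lto\textup{R})$ to obtain $\bang \Gamma, \bang \neg B \vdash \neg A$. The crucial step is then promotion $(\bang\textup{R})$: since the entire left context $\bang \Gamma, \bang \neg B$ now consists of banged formulas, the rule applies and yields $\bang \Gamma, \bang \neg B \vdash \bang \neg A$. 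With $\bang \neg A$ produced on the right, a final $(\lto\textup{L})$ against $\whynot A \equiv (\bang \neg A) \lto 0$ (premises the derived sequent and $0 \vdash 0$) gives $\bang \Gamma, \bang \neg B, \whynot A \vdash 0$, and one last $(\lto\textup{R})$ re-currying $\bang \neg B$ delivers the desired $\bang \Gamma, \whynot A \vdash \whynot B$.

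The main obstacle -- and the reason the rule is stated with a banged context $\bang \Gamma$ rather than an arbitrary $\Gamma$ -- is the promotion step $(\bang\textup{R})$: it is applicable only because, after moving $\bang \neg B$ into the context and abstracting $A$, every formula to the left of the turnstile has the form $\bang(\cdot)$. This side condition is exactly what forces the hypothesis $\bang \Gamma$, and it is what lets the negation $\neg A$ be strengthened to $\bang \neg A$ so that it can discharge $\whynot A$ on the left. Everything else is routine manipulation of $(\lto\textup{L})$, $(\lto\textup{R})$, $(\bang\textup{L})$ and $\rulecut$, and the ``double line'' notation records that each stated rule abbreviates this short sequence of $\ILL$ steps.
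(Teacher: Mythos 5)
Your proposal is correct and follows essentially the same route as the paper's own derivation: uncurry the premise via cut, abstract $A$ with $(\lto\textup{R})$, apply promotion $(\bang\textup{R})$ (the crucial step enabled by the fully banged context $\bang\Gamma, \bang\neg B$), and then discharge $\whynot A$ and re-curry $\bang\neg B$. Your added observation that neither rule actually needs $\DNE$, so both are admissible already in plain $\ILL$, is accurate and consistent with the paper's derivation, which indeed uses only $\ILL$ rules.
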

\begin{proof} These are straightforward to derive, for instance, the first rule can be derived as:
\[\begin{prooftree}
	\[
        \[
            \[
                \[
                    \[ 
                        \bang \Gamma, A \vdash \whynot B
                        \justifies
                        \bang \Gamma, A \vdash \neg \bang \neg B
                        \using{\textup{(def)}}
                    \]
                    \quad 
                    \[
                        \bang \neg B \vdash \bang \neg B \quad \bot \vdash \bot
                        \justifies
                        \bang \neg B, \neg \bang \neg B \vdash \bot
                        \using{(\lto \mbox{L})}
                    \]
                    \justifies
                    \bang \Gamma, \bang \neg B, A \vdash \bot
                    \using{(\textup{cut})}
                \]
                \justifies
	          \bang \Gamma,	\bang \neg B \vdash \neg A
                \using{(\lto\textup{R})}
           \]
	   \justifies
	   \bang \Gamma, \bang \neg B \vdash \bang \neg A
            \using{(\bang\textup{R})}
        \]
	  \Justifies
	  \bang \Gamma,	\neg \bang \neg A \vdash \neg 	\bang \neg B 
    \]
	\justifies
	\bang \Gamma, \whynot A \vdash \whynot B
    \using{\textup{(def)}}
\end{prooftree}
\]
where on the double-line step (derivable rule) we are again using (cut) and the axiom $\bot \vdash \bot$ to go from $\bang \Gamma, \bang \neg B \vdash \bang \neg A$ to $\bang \Gamma,	\neg \bang \neg A \vdash \neg 	\bang \neg B$.
The second rule can also be easily derived in $\ILL + \DNE$.
\end{proof}

\begin{lem} \label{bang \whynot } $\CLLb$ proves the following equivalences:
	\begin{enumerate}
		\item[(i)] $\bang \bang A \lequiv \, \bang A$
		\item[(ii)] $\bang \whynot \bang \whynot A \lequiv \, \bang \whynot A$
		\item[(iii)]  $\whynot \bang \whynot \bang A \lequiv \, \whynot \bang A$
	\end{enumerate}
\end{lem}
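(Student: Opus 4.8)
The plan is to derive all three equivalences from a handful of elementary building blocks, all available in $\CLLb$. I would isolate three reusable facts. First, \emph{$\bang$-monotonicity}: if $X \vdash Y$ then $\bang X \vdash \bang Y$, obtained by applying $(\bang\textup{L})$ and then $(\bang\textup{R})$. Second, the \emph{$\whynot$-unit} $C \vdash \whynot C$ for every $C$, which is the second derivable rule of Lemma \ref{admissible} instantiated at the identity $C \vdash C$. Third, the \emph{$\whynot$-dereliction} $\whynot \bang \whynot A \vdash \whynot A$, which is the conceptual heart of the whole argument: here I would feed the sequent $\bang \whynot A \vdash \whynot A$ (itself just $(\bang\textup{L})$ on the identity $\whynot A \vdash \whynot A$) into the first derivable rule of Lemma \ref{admissible}, taking its ``$A$'' to be $\bang \whynot A$, its ``$B$'' to be $A$, and its banged context empty, so that the antecedent $\bang \whynot A$ is promoted to $\whynot \bang \whynot A$.

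With these in hand, $(i)$ is pure $\ILL$ bookkeeping: $\bang \bang A \vdash \bang A$ is $(\bang\textup{L})$ applied to the identity $\bang A \vdash \bang A$, and the converse $\bang A \vdash \bang \bang A$ is $(\bang\textup{R})$ applied to that same identity, since the context $\bang A$ is already banged.

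For $(iii)$, the forward direction $\whynot \bang \whynot \bang A \vdash \whynot \bang A$ is exactly the dereliction fact with $A$ replaced by $\bang A$, so nothing new is required. For the converse $\whynot \bang A \vdash \whynot \bang \whynot \bang A$ I would work from the inside out: the $\whynot$-unit gives $\bang A \vdash \whynot \bang A$; applying $(\bang\textup{R})$ to the banged context $\bang A$ yields $\bang A \vdash \bang \whynot \bang A$; the $\whynot$-unit again, now at $C = \bang \whynot \bang A$, gives $\bang A \vdash \whynot \bang \whynot \bang A$; and finally the first rule of Lemma \ref{admissible} promotes the antecedent $\bang A$ to $\whynot \bang A$, which is the goal.

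For $(ii)$, the converse direction $\bang \whynot A \vdash \bang \whynot \bang \whynot A$ follows by $(\bang\textup{R})$ on the banged context $\bang \whynot A$ from the $\whynot$-unit instance $\bang \whynot A \vdash \whynot \bang \whynot A$. The forward direction $\bang \whynot \bang \whynot A \vdash \bang \whynot A$ is where I expect the one genuine subtlety, and hence the main obstacle alongside establishing dereliction: one must resist stripping the outer $\bang$ on the left via $(\bang\textup{L})$, since that would leave an un-banged succedent that cannot be re-banged. Instead I would apply $(\bang\textup{R})$ to the (already banged) context $\bang \whynot \bang \whynot A$, reducing the goal to $\bang \whynot \bang \whynot A \vdash \whynot A$; then $(\bang\textup{L})$ reduces this to $\whynot \bang \whynot A \vdash \whynot A$, which is precisely the dereliction fact. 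Equivalently, dereliction together with $\bang$-monotonicity gives this direction in a single line. Any appeal to classical reasoning via $\DNE$ is confined to Lemma \ref{admissible} itself; the assembly carried out here is purely structural, using only $(\bang\textup{L})$, $(\bang\textup{R})$, and the unit.
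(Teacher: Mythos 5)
Your proposal is correct and takes essentially the same route as the paper: the ``dereliction'' fact $\whynot \bang \whynot A \vdash \whynot A$ you isolate is exactly the inner portion of the paper's derivation of $(ii)$ (identity, then $(\bang\textup{L})$, then the first rule of Lemma \ref{admissible}), and your assembly of both directions of $(ii)$ and $(iii)$ via $(\bang\textup{L})$, $(\bang\textup{R})$, the $\whynot$-unit and the antecedent-promotion rule coincides with the paper's proof trees, with $(iii)$ merely spelled out where the paper says it is ``similar''. No gaps.
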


\begin{proof} $(i)$ is immediate. The two implications in $(ii)$ can be shown as
\[
\begin{prooftree}
	\[
        \[
            \[ 
                \whynot A \vdash \whynot A
            	\justifies 
            	\bang \whynot A \vdash \whynot A
                \using{(\bang\textup{L})}
            \]
        	\Justifies
        	\whynot	\bang \whynot A \vdash \whynot A
            \using {\mbox{(Lemma \ref{admissible})}}
        \]
        \justifies
        \bang \whynot	\bang \whynot A \vdash \whynot A
        \using{(\bang\textup{L})}
    \]
    \justifies
    \bang \whynot \bang \whynot A \vdash \bang \whynot A
    \using{(\bang\textup{R})}
\end{prooftree}
\quad \quad \quad
\begin{prooftree}
	\[ \bang \whynot A \vdash \bang \whynot A
	\Justifies 
	\bang \whynot A \vdash \whynot\bang \whynot A
    \using
    \using {\mbox{(Lemma \ref{admissible})}}
    \]
	\justifies
	\bang \whynot A \vdash \bang \whynot\bang \whynot A
    \using{(\bang\textup{R})} 
\end{prooftree} \]
$(iii)$ can be shown in a similar way.
\end{proof}

Let us consider first the combination of G\"odel's negative translation with the Girard $(\cdot)^\circ$ translation:

\begin{thm}[$(\cdot)^{\Goedel\circ}$ translation] \label{g-circ-composition} Consider the following translation of $\CLb$ to $\CLLb$ obtained by composing $(\cdot)^{\Goedel}$ and $(\cdot)^\circ$:
	\eqleft{A^{\Goedel\circ} \pdefin (A^{\Goedel})^\circ.} 
	This composition can be presented in a modular (and simpler) way as follows:
	\eqleft{A^{\Goedel\circ}  \pdefin ~\bang  A_{\Goedel\circ}}
	where $A_{\Goedel\circ}$ is defined inductively as:
	\[
	\begin{array}{rclrcl}
		(A\cwedge B)_{\Goedel\circ} 
		& \pdefin &  \bang  A_{\Goedel\circ} \cwedge \bang  B_{\Goedel\circ}&
		P_{\Goedel\circ} 
		& \pdefin &   \whynot  \bang P, 
		\textup{~for $P$ atomic} \\[1mm]
		(A\awedge B)_{\Goedel\circ} 
		& \pdefin & A_{\Goedel\circ} \awedge B_{\Goedel\circ} \quad & 
		(\forall x A)_{\Goedel\circ}
		& \pdefin & \forall x A_{\Goedel\circ}\\[1mm]
		(A\oplus B)_{\Goedel\circ} 
		& \pdefin &   \whynot ( \bang  A_{\Goedel\circ} \oplus \bang  B_{\Goedel\circ})  &
		(\exists x A)_{\Goedel\circ}
		& \pdefin &  \whynot  \exists x \bang A_{\Goedel\circ}  \\[1mm]
		(A\lto B)_{\Goedel\circ}
		& \pdefin &\bang  A_{\Goedel\circ} \lto B_{\Goedel\circ} &
		\Tr{(\bang A)}{\Goedel\circ} 
		& \pdefin &  \bang  \Tr{A}{\Goedel\circ}.
	\end{array}
	\]
\end{thm}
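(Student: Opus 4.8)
The plan is to prove, in $\CLLb$, the equivalence $A^{\Goedel\circ} \lequiv \bang A_{\Goedel\circ}$ by induction on the structure of $A$. The first observation is that, by the very definition of Girard's call-by-name translation, $A^{\Goedel\circ} = (A^{\Goedel})^{\circ} = \bang{(A^{\Goedel})_{\circ}}$. Since $\bang(\cdot)$ preserves provable equivalence (from $X \vdash Y$ one derives $\bang X \vdash \bang Y$ by $(\bang\textup{L})$ followed by $(\bang\textup{R})$), it suffices to establish $(A^{\Goedel})_{\circ} \lequiv A_{\Goedel\circ}$. Throughout I would use that $\bang$, $\whynot$, $\forall x$, $\exists x$ and the connectives are all congruences for $\lequiv$.

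For the clauses where the G\"odel--Gentzen translation introduces no outer double negation --- that is, $\cwedge$, $\awedge$, $\lto$, $\forall$ and $\bang$ --- the step is routine: the translation $(\cdot)_{\circ}$ commutes with the connective or quantifier in question, and the induction hypothesis rewrites each immediate subformula so as to match the corresponding clause of $A_{\Goedel\circ}$ verbatim. For an atom $P$ we have $P^{\Goedel} = \neg\neg P$, so $(P^{\Goedel})_{\circ} = (\neg\neg P)_{\circ} \lequiv \whynot \bang P_{\circ} = \whynot \bang P = P_{\Goedel\circ}$, using $P_{\circ} = P$ and the identity $(\neg\neg C)_{\circ} \lequiv \whynot \bang C_{\circ}$ established earlier in this section.

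The two cases that require work are $\oplus$ and $\exists$, precisely those in which the G\"odel--Gentzen translation prepends a $\neg\neg$. For $A \oplus B$, from $(A \oplus B)^{\Goedel} = \neg\neg(A^{\Goedel} \oplus B^{\Goedel})$ together with the same identity $(\neg\neg C)_{\circ} \lequiv \whynot \bang C_{\circ}$, the $\oplus$-clause of $(\cdot)_{\circ}$, and the induction hypothesis, I would obtain $((A \oplus B)^{\Goedel})_{\circ} \lequiv \whynot \bang{(\bang A_{\Goedel\circ} \oplus \bang B_{\Goedel\circ})}$. It then remains only to absorb the spurious inner $\bang$, i.e. to pass to $\whynot (\bang A_{\Goedel\circ} \oplus \bang B_{\Goedel\circ}) = (A \oplus B)_{\Goedel\circ}$; this is immediate from Proposition \ref{bang-equivalences}$(iii)$, namely $\bang{(\bang X \oplus \bang Y)} \lequiv \bang X \oplus \bang Y$, together with the congruence $\whynot(\cdot)$. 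The case $\exists x A$ is entirely parallel: one reaches $((\exists x A)^{\Goedel})_{\circ} \lequiv \whynot \bang \exists x \bang A_{\Goedel\circ}$ and absorbs the inner $\bang$ via Proposition \ref{bang-equivalences}$(vi)$, $\bang \exists x \bang X \lequiv \exists x \bang X$, arriving at $\whynot \exists x \bang A_{\Goedel\circ} = (\exists x A)_{\Goedel\circ}$.

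I expect the difficulty to be organisational rather than conceptual. The key point to verify carefully is that the composite translation produces an inner $\bang$ beneath a $\whynot$ in exactly the $\oplus$- and $\exists$-clauses, and that in precisely those two cases the needed $\bang$-idempotence is supplied by Proposition \ref{bang-equivalences}$(iii)$ and $(vi)$. Note that both of these equivalences already hold in $\ILL$, so the absorption steps do not consume $\DNE$; $\DNE$ enters only through the auxiliary identity $(\neg\neg C)_{\circ} \lequiv \whynot \bang C_{\circ}$ that drives the atomic, $\oplus$ and $\exists$ cases.
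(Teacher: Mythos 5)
Your proof is correct, and it follows the same overall skeleton as the paper's (induction on $A$, using the computation $(\neg\neg C)_\circ = \whynot\bang C_\circ$ and Proposition~\ref{bang-equivalences}), but with one genuine difference: your inductive invariant is the \emph{unbanged} equivalence $(A^{\Goedel})_\circ \lequiv A_{\Goedel\circ}$, whereas the paper inducts on the weaker, banged statement $\bang(A^{\Goedel})_\circ \lequiv \bang A_{\Goedel\circ}$. This difference has a real payoff. Because the paper's induction hypothesis only relates the banged formulas, in the cases where $(\cdot)_\circ$ leaves a subtranslation unbanged the paper must insert and remove bangs to make its hypothesis applicable, which is why its proof invokes Proposition~\ref{bang-equivalences}~$(ix)$ for $\awedge$, $(xi)$ for $\lto$, $(xii)$ for $\forall$, and $\bang$-idempotence (Lemma~\ref{bang \whynot}~$(i)$) for the modality, in addition to $(iii)$ and $(vi)$ for $\oplus$ and $\exists$. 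Your stronger hypothesis makes all of those cases pure congruence reasoning, confining the appeals to Proposition~\ref{bang-equivalences} to exactly $(iii)$ and $(vi)$, as you say; and the stronger statement still yields the theorem via the $\bang$-congruence you justify at the outset. (Incidentally, the paper itself uses this same strengthening trick later, in the $(\cdot)^{\Kuroda*}$ theorem.) One small inaccuracy in your closing remark: $\DNE$ is not actually consumed by the auxiliary identity either, since $(\neg\neg C)_\circ = \whynot\bang C_\circ$ is established in the paper's lemma by a direct syntactic computation (unfolding $\neg$, the $\lto$-clause and $0_\circ = 0$), so your induction in fact establishes the equivalence already over $\ILL$; this only means you proved something slightly stronger than claimed, and it does not affect correctness.
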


\begin{proof} We can show by induction on $A$ that $\bang (A^{\Goedel})_\circ$ is equivalent to the simpler formula $\bang  A_{\Goedel\circ}$, proving this way that the composed translation $(A^{\Goedel})^\circ$ is equivalent to $\bang  A_{\Goedel\circ}$. For $P$ atomic we have:
\[ 
    \bang (P^{\Goedel})_\circ 
    \equiv \; \bang (\neg \neg P)_\circ  
    \equiv \; \bang  \whynot \bang P 
    \equiv \; \bang P_{\Goedel\circ}.
\]
In the case of implications $A \lto B$ we have:
\[
    \bang ((A\lto B)^\Goedel)_\circ 
    \equiv \; \bang (A^{\Goedel} \lto B^{\Goedel})_\circ 
    \equiv \; \bang ( \bang  (A^{\Goedel})_\circ \lto (B^{\Goedel})_\circ)
    \stackrel{\textup{(IH)}}{\lequiv} \; \bang (\bang  A_{\Goedel\circ} \lto B_{\Goedel\circ})
\]
using $\bang (\bang A \lto B) \lequiv \; \bang (\bang A \lto \, \bang B)$, cf. Prop. \ref{bang-equivalences} $(xi)$. \\[1mm]
Additive conjunction:
\[
    \bang ((A\awedge B)^{\Goedel})_\circ 
    \equiv \; \bang ((A^{\Goedel})_\circ \awedge (B^{\Goedel})_\circ)
    \lequiv \; \bang (\bang (A^{\Goedel})_\circ \awedge \bang (B^{\Goedel})_\circ)
    \stackrel{\textup{(IH)}}{\lequiv} \; \bang (A_{\Goedel\circ} \awedge B_{\Goedel\circ})
\]
using $\bang (\bang A \awedge \bang B) \lequiv \; \bang (A \awedge B)$, cf. Prop. \ref{bang-equivalences} $(ix)$. \\[1mm]
For the additive disjunction we have:
\[
    \bang ((A\oplus B)^{\Goedel})_{\circ} 
    \equiv \; \bang (\neg \neg(A^{\Goedel} \oplus B^{\Goedel}))_\circ 
    \equiv \; \bang  \whynot \bang (\bang  (A^{\Goedel})_\circ \oplus \bang  (B^{\Goedel})_\circ) 
    \stackrel{\textup{(IH)}}{\lequiv} \; \bang  \whynot (\bang  A_{\Goedel\circ} \oplus \bang  B_{\Goedel\circ})
\]
using $\bang (\bang A \oplus \bang B) \lequiv \; \bang A \oplus \bang B$, cf. Prop. \ref{bang-equivalences} $(iii)$. \\[1mm]
Multiplicative conjunction:
\[
    \bang ((A\cwedge B)_\Goedel)^\circ 
    \equiv \; \bang (\bang (A^{\Goedel})_\circ \cwedge \bang (B^{\Goedel})_\circ) 
    \stackrel{\textup{(IH)}}{\lequiv} \; \bang (\bang  A_{\Goedel\circ} \cwedge \bang B_{\Goedel\circ}).
\]
Universal quantifier:
\[
    \bang ((\forall x A)^\Goedel)_\circ 
    \equiv \; \bang \forall x (A^{\Goedel})_\circ
    \stackrel{\textup{(IH)}}{\lequiv} \; \bang \forall x A_{\Goedel\circ}
\]
using $\bang \forall x \bang A \lequiv \; \bang \forall x A$, cf. Prop. \ref{bang-equivalences} $(xii)$. \\[1mm]
Existential quantifier:
\[
    \bang ((\exists x A)^\Goedel)_\circ 
    \equiv \; \bang (\neg \neg \exists x A^{\Goedel})_\circ 
    \equiv \; \bang \whynot \bang \exists x \bang  (A^{\Goedel})_\circ
    \stackrel{\textup{(IH)}}{\lequiv} \; \bang \whynot \exists x \bang A_{\Goedel\circ}
\]
using $\bang \exists x \bang A \lequiv \exists x \bang A$, cf. Prop. \ref{bang-equivalences} $(vi)$. \\[1mm]
Modality $\bang$
\[
    \bang ((\bang A)^\Goedel)_\circ 
    \equiv \; \bang ( \bang A^{\Goedel})_\circ 
    \equiv \; \bang \bang (A^{\Goedel})_\circ
    \stackrel{\textup{(L.\ref{bang \whynot})}}{\lequiv} \; \bang (A^{\Goedel})_\circ \stackrel{\textup{(IH)}}{\lequiv} \;  \bang A_{\Goedel\circ}
\]
which concludes the proof.
\end{proof}


Now consider the composition of the G\"odel negative translation $(\cdot)^{\Goedel}$ with the $(\cdot)^*$ translation.

\begin{thm}[$(\cdot)^{\Goedel*}$ translation] Consider the following translation of $\CLb$ to $\CLLb$ obtained by composing $(\cdot)^{\Goedel}$ and $(\cdot)^*$:
	\eqleft{A^{\Goedel*} \pdefin (A^{\Goedel})^*.} 
	This composition can be presented directly in a modular (and simpler) way as follows:
	\[
	\begin{array}{rclrcl}
		(A\cwedge B)^{\Goedel*} 
		& \pdefin &  A^{\Goedel*} \cwedge  B^{\Goedel*}&
		P^{\Goedel*} 
		& \pdefin &  \bang   \whynot  \bang P, 
		\textup{~for $P$ atomic} \\[1mm]
		(A\awedge B)^{\Goedel*} 
		& \pdefin & \bang  (A^{\Goedel*} \awedge B^{\Goedel*}) \quad & 
		(\forall x A)^{\Goedel*}
		& \pdefin & \bang \forall x A^{\Goedel*}\\[1mm]
		(A\oplus B)^{\Goedel*} 
		& \pdefin & \bang \whynot (A^{\Goedel*} \oplus B^{\Goedel*}) &
		(\exists x A)^{\Goedel*}
		& \pdefin & \bang \whynot \exists x A^{\Goedel*}  \\[1mm]
		(A\lto B)^{\Goedel*}
		& \pdefin &\bang  (A^{\Goedel*} \lto B^{\Goedel*}) &
		(\bang A)^{\Goedel*} 
		& \pdefin &    \bang   A^{\Goedel*}.
	\end{array}
	\]
\end{thm}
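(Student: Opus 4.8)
The plan is to prove, by induction on the structure of $A$, that the composite $(A^{\Goedel})^*$ is $\CLLb$-equivalent to the formula $A^{\Goedel*}$ produced by the modular clauses in the statement; this mirrors exactly the strategy used for $(\cdot)^{\Goedel\circ}$ in Theorem~\ref{g-circ-composition}, but now applied to the call-by-value translation $(\cdot)^*$. The one external ingredient needed is the equivalence $(\neg\neg A)^* \lequiv \bang\whynot A^*$ established earlier for $(\cdot)^*$, together with the standing fact that $\lequiv$ is a congruence in $\ILL$ (and hence in $\CLLb$), so that the induction hypothesis can be pushed underneath any connective, quantifier, modality, or under $\whynot$.

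The three cases in which the G\"odel translation inserts a double negation — atomic formulas, $\oplus$ and $\exists$ — are discharged directly by that equivalence. For atomic $P$ we get $(P^{\Goedel})^* = (\neg\neg P)^* \lequiv \bang\whynot P^* = \bang\whynot\bang P$, matching the clause $P^{\Goedel*} \pdefin \bang\whynot\bang P$. For disjunction, $((A\oplus B)^{\Goedel})^* = (\neg\neg(A^{\Goedel}\oplus B^{\Goedel}))^* \lequiv \bang\whynot((A^{\Goedel})^*\oplus(B^{\Goedel})^*)$, and the induction hypothesis rewrites the inner pair to yield $\bang\whynot(A^{\Goedel*}\oplus B^{\Goedel*})$. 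The existential is of identical shape: $((\exists x A)^{\Goedel})^* = (\neg\neg\exists x A^{\Goedel})^* \lequiv \bang\whynot\exists x (A^{\Goedel})^* \lequiv \bang\whynot\exists x A^{\Goedel*}$.

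The remaining constructs — $\cwedge$, $\awedge$, $\lto$, $\forall$ and $\bang$ — require no double-negation manipulation whatsoever: one unfolds the (transparent) action of $(\cdot)^{\Goedel}$ on these symbols, then the definition of $(\cdot)^*$, and closes by the induction hypothesis. For instance $((A\awedge B)^{\Goedel})^* = (A^{\Goedel}\awedge B^{\Goedel})^* = \bang((A^{\Goedel})^*\awedge(B^{\Goedel})^*) \lequiv \bang(A^{\Goedel*}\awedge B^{\Goedel*})$, and $((\bang A)^{\Goedel})^* = (\bang A^{\Goedel})^* = \bang(A^{\Goedel})^* \lequiv \bang A^{\Goedel*}$; the multiplicative conjunction, implication and universal cases are entirely analogous, the bang demanded by each target clause being supplied automatically by the matching clause of $(\cdot)^*$.

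I expect no genuine obstacle: the point worth emphasising is that, in contrast with the $(\cdot)^{\Goedel\circ}$ proof, \emph{no} appeal to Proposition~\ref{bang-equivalences} is required. This is because the call-by-value translation already places its bangs on the \emph{outside} of each construct, so the bang required by each target clause coincides syntactically with the one produced by $(\cdot)^*$, rather than having to be extracted by collapsing a doubled bang. The only care needed is to apply the equivalence $(\neg\neg A)^* \lequiv \bang\whynot A^*$ to the correct subformula — namely $A^{\Goedel}\oplus B^{\Goedel}$ and $\exists x A^{\Goedel}$ in the $\oplus$ and $\exists$ cases — before invoking the induction hypothesis.
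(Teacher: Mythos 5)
Your proof is correct and takes essentially the same route as the paper's: a structural induction in which the lemma $(\neg\neg A)^* \lequiv \; \bang \whynot A^*$ handles exactly the three clauses where $(\cdot)^{\Goedel}$ inserts a double negation (atoms, $\oplus$, $\exists$), while all other cases unfold definitionally and close under the induction hypothesis -- which is precisely the induction the paper sketches via its $\exists$ example. Your added observation that, unlike the $(\cdot)^{\Goedel\circ}$ case, no appeal to Proposition \ref{bang-equivalences} is needed (since $(\cdot)^*$ already places its bangs on the outside) is accurate and consistent with the paper's proof.
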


\begin{proof} A simple induction on the structure of $A$ will show that the direct modular translation $(\cdot)^{\Goedel*}$ is equivalent to the composition $((\cdot)^{\Goedel})^*$.
%
For instance, we have
$(\exists x A)^{\Goedel*} \equiv (\neg \neg \exists x A^{\Goedel})^* \stackrel{\textup{(IH)}}{\lequiv} \; \bang \whynot \exists x A^{\Goedel*}$. 
\end{proof}





Similarly, we can consider the composition of the Kuroda translation $(\cdot)^{\Kuroda}$ with both $(\cdot)^\circ$ and $(\cdot)^*$.

\begin{thm}[$(\cdot)^{\Kuroda\circ}$ translation] Consider the following translation from $\CLb$ to $\CLLb$ obtained by composing $(\cdot)^{\Kuroda}$ and $(\cdot)^\circ$:
\eqleft{A^{\Kuroda\circ}  \pdefin (A^{\Kuroda})^\circ.}
This composition can be presented directly in a modular (and simpler) way as follows:
\eqleft{A^{\Kuroda\circ}  \pdefin ~\bang   \whynot   \bang  A_{\Kuroda\circ}}
where $\Tr{A}{\Kuroda\circ}$ is defined inductively as:
\[
\begin{array}{rclrcl}
    (A\cwedge B)_{\Kuroda\circ} 
    & \pdefin &  \bang  A_{\Kuroda\circ} \, \cwedge \, \bang  B_{\Kuroda\circ}&
    P_{\Kuroda\circ} 
    & \pdefin & P, 
    \textup{~for $P$ atomic} \\[1mm]
    (A \awedge B)_{\Kuroda\circ} 
    & \pdefin &     \bang  A_{\Kuroda\circ} \awedge \bang  B_{\Kuroda\circ} \quad & 
    (\forall x A)_{\Kuroda\circ}
    & \pdefin & \forall x \whynot \bang  A_{\Kuroda\circ}\\[1mm]
    (A\oplus B)_{\Kuroda\circ} 
    & \pdefin &   \bang  A_{\Kuroda\circ} \oplus \bang  B_{\Kuroda\circ}  &
    (\exists x A)_{\Kuroda\circ}
    & \pdefin & \exists x  \bang A_{\Kuroda\circ}  \\[1mm]
    (A\lto B)_{\Kuroda\circ}
    & \pdefin &\bang  A_{\Kuroda\circ} \lto B_{\Kuroda \circ} &
    \Tr{(\bang A)}{\Kuroda\circ} 
    & \pdefin &   \bang   \Tr{A}{\Kuroda\circ}.
\end{array}
\]
\end{thm}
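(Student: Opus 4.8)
The plan is to mimic the proof of Theorem \ref{g-circ-composition}, collapsing the composition to a single inductive equivalence guarded by a $\bang$. Unfolding the definitions, $A^{\Kuroda} = \neg\neg A_{\Kuroda}$ (Definition \ref{def-kur}), so item $(i)$ of the first Lemma of this section, instantiated at $A_{\Kuroda}$, gives
\[
    (A^{\Kuroda})^\circ = (\neg\neg A_{\Kuroda})^\circ \lequiv \bang \whynot \bang (A_{\Kuroda})_\circ .
\]
Since in $\ILL$ (hence in $\CLLb$) both $\bang(\cdot)$ and $\whynot(\cdot) = \neg\bang\neg(\cdot)$ preserve provable equivalence, it then suffices to establish the single claim
\[
    \bang (A_{\Kuroda})_\circ \lequiv \bang A_{\Kuroda\circ} ,
\]
from which the statement follows by prefixing $\whynot$ and an outer $\bang$.

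I would prove this claim by induction on $A$, following the same case pattern as Theorem \ref{g-circ-composition} but with $A_{\Kuroda}$ replacing $A^{\Goedel}$. The atomic case is trivial, both sides being $\bang P$. In the cases where Kuroda leaves the connective untouched, the $\circ$-translation already produces the bang-guarded shape of the modular clause, so the only work is to rewrite under the outer $\bang$: multiplicative conjunction and additive disjunction need only substitution of the induction hypothesis; additive conjunction uses $\bang(\bang A \awedge \bang B) \lequiv \bang(A \awedge B)$ (Proposition \ref{bang-equivalences}$(ix)$) to introduce the guarding bangs before applying the hypothesis; and implication uses $\bang(\bang A \lto \bang B) \lequiv \bang(\bang A \lto B)$ (Proposition \ref{bang-equivalences}$(xi)$) once to place a $\bang$ on the consequent so the hypothesis applies, then once more to remove it. The modality case produces $\bang \bang (A_{\Kuroda})_\circ$, which Lemma \ref{bang \whynot}$(i)$ collapses to $\bang (A_{\Kuroda})_\circ$, matching $\bang (\bang A)_{\Kuroda\circ}$ after the same collapse on the right.

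The quantifier cases are the delicate ones, since they are where the asymmetry of the Kuroda translation surfaces. For $\forall$, from $(\forall x A)_{\Kuroda} = \forall x\,\neg\neg A_{\Kuroda}$ the $\circ$-translation yields $\forall x\,(\neg\neg A_{\Kuroda})_\circ$, and I would reapply item $(i)$ of the first Lemma to rewrite $(\neg\neg A_{\Kuroda})_\circ \lequiv \whynot \bang (A_{\Kuroda})_\circ$, producing exactly the $\forall x\,\whynot \bang A_{\Kuroda\circ}$ shape of the modular clause before the induction hypothesis is applied under $\forall x\,\whynot(\cdot)$. For $\exists$, since Kuroda adds no negation, $(\exists x A)_{\Kuroda} = \exists x\,A_{\Kuroda}$ and $\circ$ gives $\exists x\,\bang (A_{\Kuroda})_\circ$, which matches the clause $\exists x\,\bang A_{\Kuroda\circ}$ directly under the hypothesis. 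The main obstacle, and the reason for carrying the claim in the bang-guarded form $\bang (A_{\Kuroda})_\circ \lequiv \bang A_{\Kuroda\circ}$ rather than the naive unbanged equivalence $(A_{\Kuroda})_\circ \lequiv A_{\Kuroda\circ}$, is precisely that the additive-conjunction and implication steps are licensed only by Proposition \ref{bang-equivalences}$(ix)$ and $(xi)$, both of which require an outer $\bang$; without it the induction would break at those two cases. Since every equivalence used is a genuine two-way $\CLLb$-equivalence, reversibility is immediate.
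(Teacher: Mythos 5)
Your proposal is correct and takes essentially the same route as the paper's proof: both reduce the statement, via item $(i)$ of the section's first lemma, to the stronger bang-guarded claim $\bang (A_{\Kuroda})_\circ \lequiv \; \bang A_{\Kuroda\circ}$, proved by induction on $A$ using Proposition \ref{bang-equivalences} $(ix)$ and $(xi)$ exactly where you invoke them. Your write-up is in fact more complete than the paper's, which displays only the $\cwedge$ and $\lto$ cases and leaves the rest (including the quantifier cases you handle explicitly) as ``treated similarly''.
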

\begin{proof} As in the proof of Theorem \ref{g-circ-composition}, we can show by induction on $A$ that $\bang \whynot \bang (A_{\Kuroda})_\circ$ (which is $(A^{\Kuroda})^\circ$) is equivalent to $\bang \whynot \bang  A_{\Kuroda\circ}$ (which is $A^{\Kuroda\circ}$). In fact, we can prove the stronger result that $\bang (A_{\Kuroda})_\circ$ is equivalent to $\bang  A_{\Kuroda\circ}$. For instance, consider
\[ 
    \bang ((A \cwedge B)_{\Kuroda})_\circ
    \equiv
    \; \bang (A_\Kuroda \cwedge B_\Kuroda)_\circ 
    \equiv
    \; \bang ((\bang (A_\Kuroda)_\circ \cwedge \, \bang (B_\Kuroda)_\circ) \stackrel{\textup{(IH)}}{\lequiv}
    \; \bang (\bang A_{\Kuroda\circ} \cwedge \, \bang B_{\Kuroda\circ})    
\]
and
\[ 
    \bang ((A \lto B)_{\Kuroda})_\circ
    \equiv
    \; \bang (A_\Kuroda \lto B_\Kuroda)_\circ 
    \equiv
    \; \bang (\bang (A_\Kuroda)_\circ \lto (B_\Kuroda)_\circ) \stackrel{\textup{(IH)}}{\lequiv}
    \; \bang (\bang A_{\Kuroda\circ} \lto B_{\Kuroda\circ})    
\]
using again Prop. \ref{bang-equivalences} $(xi)$. The other cases are treated similarly.
\end{proof}

Finally, we consider the composition of the Kuroda translation $(\cdot)^{\Kuroda}$ with  $(\cdot)^*$.

\begin{thm}[$(\cdot)^{\Kuroda*}$ translation] Consider the following translation from $\CLb$ to $\CLLb$ obtained by composing $(\cdot)^{\Kuroda}$ and $(\cdot)^*$:
\eqleft{A^{\Kuroda*}  \pdefin (A^{\Kuroda})^*.}
This composition can be presented directly in a modular (and simpler) way as follows:
\eqleft{A^{\Kuroda*}  \pdefin ~\bang   \whynot   A_{\Kuroda*}}
where $\Tr{A}{\Kuroda*}$ is defined inductively as:
\[
\begin{array}{rclrcl}
    (A\cwedge B)_{\Kuroda*} 
    & \pdefin &   A_{\Kuroda*} \cwedge  B_{\Kuroda*}&
    P_{\Kuroda*} 
    & \pdefin &  \bang P, 
    \textup{~for $P$ atomic} \\[1mm]
    (A \awedge B)_{\Kuroda*} 
    & \pdefin & \bang (A_{\Kuroda*} \awedge B_{\Kuroda*}) \quad & 
    (\forall x A)_{\Kuroda*}
    & \pdefin & \bang \forall x  \whynot  A_{\Kuroda*}\\[1mm]
    (A\oplus B)_{\Kuroda*} 
    & \pdefin &  A_{\Kuroda*} \oplus  B_{\Kuroda*}  &
    (\exists x A)_{\Kuroda*}
    & \pdefin & \exists x A_{\Kuroda*} \\[1mm]
    (A\lto B)_{\Kuroda*}
    & \pdefin &\bang  (A_{\Kuroda*} \lto B_{\Kuroda*}) &
    \Tr{(\bang A)}{\Kuroda*} 
    & \pdefin &  \bang\Tr{A}{\Kuroda*}.
\end{array}
\]
\end{thm}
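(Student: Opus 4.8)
The plan is to follow the template of the proof of Theorem \ref{g-circ-composition}, reducing the claim to a single clean induction. The starting observation is that, by the identity $(\neg\neg A)^* \lequiv \bang\whynot A^*$ (part $(ii)$ of the first Lemma of this section), we have $A^{\Kuroda*} = (A^{\Kuroda})^* = (\neg\neg A_{\Kuroda})^* \lequiv \bang\whynot (A_{\Kuroda})^*$, using that $A^{\Kuroda} = \neg\neg A_{\Kuroda}$. Hence the desired identity $A^{\Kuroda*}\lequiv\bang\whynot A_{\Kuroda*}$ follows once we show that $(A_{\Kuroda})^* \lequiv A_{\Kuroda*}$ for every formula $A$. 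I would prove this last equivalence by induction on the structure of $A$, freely using that provable equivalence is a congruence in $\ILL$ (and hence in $\CLLb$).

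For the atomic case, $(P_{\Kuroda})^* = P^* = \bang P = P_{\Kuroda*}$. For $\cwedge$, $\oplus$ and $\exists x$ the Kuroda inner transform is the identity and $(\cdot)^*$ introduces no $\bang$, so the outer structure of $(A_{\Kuroda})^*$ already matches that of $A_{\Kuroda*}$; likewise for $\awedge$, $\lto$ and $\bang$, where both $(\cdot)^*$ and the target place a $\bang$ in the same position (e.g.\ $((A\lto B)_{\Kuroda})^* = \bang((A_{\Kuroda})^*\lto (B_{\Kuroda})^*)$ and $(A\lto B)_{\Kuroda*} = \bang(A_{\Kuroda*}\lto B_{\Kuroda*})$). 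In all these cases the equivalence is immediate from the induction hypothesis and congruence, with no further linear equivalence required --- in contrast to the call-by-name composition, here one does not need to carry around an extra outer $\bang$.

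The case I expect to be the crux is the universal quantifier, since this is the only clause where Kuroda inserts a double negation. Here $((\forall x A)_{\Kuroda})^* = (\forall x \neg\neg A_{\Kuroda})^* = \bang\forall x (\neg\neg A_{\Kuroda})^*$, and applying the identity $(\neg\neg C)^*\lequiv\bang\whynot C^*$ together with the induction hypothesis yields $\bang\forall x \bang\whynot A_{\Kuroda*}$, whereas the target is $(\forall x A)_{\Kuroda*} = \bang\forall x \whynot A_{\Kuroda*}$. The two are reconciled by deleting the $\bang$ that sits directly under the quantifier, which is precisely Proposition \ref{bang-equivalences} $(xii)$, namely $\bang\forall x \bang B \lequiv \bang\forall x B$ (valid in $\ILL$, and by the footnote to that Proposition also in $\CLLb$), instantiated at $B = \whynot A_{\Kuroda*}$. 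This closes the induction and, combined with the opening observation, gives $A^{\Kuroda*}\lequiv\bang\whynot A_{\Kuroda*}$ as required.
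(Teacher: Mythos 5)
Your proposal is correct and takes essentially the same approach as the paper's proof: it reduces the theorem to the stronger claim that $(A_{\Kuroda})^* \lequiv A_{\Kuroda*}$ for every $A$, proves this by induction on the structure of $A$, and recovers the stated equivalence $A^{\Kuroda*} \lequiv \; \bang \whynot A_{\Kuroda*}$ via the identity $(\neg\neg A)^* \lequiv \; \bang \whynot A^*$ and congruence. The only difference is one of detail: the paper writes out the $\awedge$ and $\lto$ cases and dismisses the rest as similar, whereas you additionally spell out the universal-quantifier case --- the only clause where Kuroda inserts a double negation, handled exactly as you say by Proposition \ref{bang-equivalences} $(xii)$ --- which is the case the paper leaves implicit.
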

\begin{proof} We can show by induction on $A$ that $\bang \whynot (A_{\Kuroda})^*$ -- which is $(A^{\Kuroda})^*$ -- is equivalent to $\bang \whynot A_{\Kuroda*}$ -- which is $A^{\Kuroda*}$. In fact, we can prove the stronger result that $(A_{\Kuroda})^*$ is equivalent to $A_{\Kuroda*}$. For instance
\[ 
    ((A \awedge B)_{\Kuroda})^*
    \equiv
    (A_\Kuroda \awedge B_\Kuroda)^* 
    \equiv
    \; \bang ((A_\Kuroda)^* \awedge (B_\Kuroda)^*) \stackrel{\textup{(IH)}}{\lequiv}
    \; \bang (A_{\Kuroda*} \awedge B_{\Kuroda*})    
\]
and
\[ 
    ((A \lto B)_{\Kuroda})^*
    \equiv
    \; \bang ((A_\Kuroda)^* \lto (B_\Kuroda)^*)
    \stackrel{\textup{(IH)}}{\lequiv}
    \; \bang (A_{\Kuroda*} \lto B_{\Kuroda*}).
\]
The other cases are treated similarly.
\end{proof}

\begin{obs} Even though the four translations of $\CLb$ into $\CLLb$ described above are obtained by composing two `optimal' translations, it does not imply that the composition is also optimal. In fact, Theorems 1 - 4 already incorporate some simplifications valid in $\CLLb$. Perhaps the four translations above are capable of even further simplifications (from the inside or outside). Due to the length of the present paper we postpone an investigation into the simplifications of these compositions to a future work. For further readings on `direct' translations of $\CL$ into $\CLL$ see \cite{GirTCS,shi}.
\end{obs}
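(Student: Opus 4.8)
This final statement is an \emph{observation} rather than a theorem, so the task is to substantiate its two concrete assertions: first, that composing two simplification-maximal translations need not yield a maximal translation; and second, that the modular presentations in Theorems 1--4 already absorb some $\CLLb$-valid simplifications. The plan is to make both precise by keeping careful track of \emph{which} ambient theory each maximality claim is relative to.

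For the first assertion, the point is that the G\"odel--Gentzen and Kuroda translations were shown maximal only as translations \emph{into} $\ILb$ (Propositions \ref{thm-goedel-simplifies-kol} and \ref{prop-kur-simplifies-kolm}), and the Girard $(\cdot)^*$ and $(\cdot)^\circ$ translations maximal only as translations \emph{into} $\ILL$ (Propositions \ref{outsidebang} and \ref{simpinside}); in each case the failure of further simplification was witnessed by the negative entries of Propositions \ref{not-not-equivalences} and \ref{bang-equivalences}. The composite, however, lands in $\CLLb = \ILL + \DNE$, which is strictly stronger: under $\DNE$ \emph{every} equivalence of Proposition \ref{not-not-equivalences} becomes derivable, so several decorations that were irreducible over $\ILb$ or $\ILL$ collapse. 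I would substantiate the assertion by exhibiting a single explicit $\CLLb$-equivalence that deletes a connective decoration still present in one of the four composites --- for instance using the idempotence facts $\bang \whynot \bang \whynot A \lequiv \bang \whynot A$ and $\whynot \bang \whynot \bang A \lequiv \whynot \bang A$ together with $\DNE$ to remove a redundant $\whynot \bang$ wrapper around a subformula already of that shape. A single such reduction suffices to show that the composite is not maximal over $\CLLb$.

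For the second assertion I would point directly at the proofs of the four theorems, which already invoke $\CLLb$-specific facts rather than merely substituting one clause set into the other: Theorem \ref{g-circ-composition} uses Proposition \ref{bang-equivalences}\,$(iii)$, $(vi)$, $(ix)$, $(xi)$ and $(xii)$, as well as the $\bang$-idempotence lemma, and the remaining three theorems are proved analogously. Thus the displayed modular clauses are, by construction, partially reduced composites rather than the naive ones. Making this explicit amounts to writing, for one clause of each translation, the unreduced composite clause beside its reduced form and the bridging $\CLLb$-equivalence --- all of which already appear in the theorem proofs.

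The main obstacle lies in the \emph{strong} form of the first assertion. To claim that the composites admit \emph{no} further simplification (true maximality over $\CLLb$), one would first need a complete catalogue of $\CLLb$-equivalences and failures, analogous to Propositions \ref{not-not-equivalences} and \ref{bang-equivalences} but carried out over $\CLLb$ rather than $\ILb$ or $\ILL$, together with suitable counter-models (for example $\CLL$-algebras, or pocrims, validating $\DNE$) witnessing each failure. Because $\DNE$ erases many of the distinctions exploited earlier, these counter-models differ from those used before, and assembling a provably maximal reduction set for each of the four composites is a self-contained undertaking. This is exactly why the observation records the existence of the composites and their partial reductions but defers the full optimality analysis to future work.
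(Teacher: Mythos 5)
The target statement is an \emph{Observation}, and the paper offers no proof of it: it is a closing remark whose only support is the preceding theorems themselves, so there is no paper proof to match your argument against step by step. Judged on its own terms, your substantiation is correct and consonant with the authors' intent. You correctly locate why a composition of `optimal' translations can fail to be optimal --- maximality is relative to the target theory, and $\CLLb = \ILL + \DNE$ validates strictly more equivalences than $\ILb$ or $\ILL$ (the paper itself notes that every line of Proposition \ref{not-not-equivalences} holds in the presence of $\DNE$) --- and you correctly identify that the four theorems present partially reduced composites, citing exactly the equivalences their proofs invoke (e.g.\ Proposition \ref{bang-equivalences} $(iii)$, $(vi)$, $(ix)$, $(xi)$, $(xii)$ in Theorem \ref{g-circ-composition}). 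Your two additions --- witnessing non-maximality concretely by a $\DNE$-enabled reduction such as collapsing a redundant $\bang{\whynot A}$-style wrapper via the idempotence equivalences, and diagnosing that genuine maximality over $\CLLb$ would require a fresh catalogue of equivalences and counter-models validating $\DNE$ --- go beyond the paper, which simply records the composites and defers the optimality analysis to future work; both additions are sound and are precisely the kind of analysis being postponed.

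Two small corrections of attribution, neither of which affects your conclusions. First, Propositions \ref{thm-goedel-simplifies-kol}, \ref{prop-kur-simplifies-kolm}, \ref{outsidebang} and \ref{simpinside} establish only the simplification-from-outside/inside relations; maximality of those simplifications is never \emph{proved} in the paper but only asserted in the observation on maximality of simplifications, with verification left to the reader via Propositions \ref{not-not-equivalences} and \ref{bang-equivalences} --- so ``were shown maximal'' overstates what those propositions contain. Second, the facts from Proposition \ref{bang-equivalences} used in the proofs of the four theorems are already $\ILL$-valid, hence $\CLLb$-valid by the paper's own footnote, rather than ``$\CLLb$-specific''; the only ingredients stated and proved at the $\CLLb$ level are the derived rules of Lemma \ref{admissible} and the lemma giving $\bang{\bang{A}} \lequiv\, \bang{A}$ together with the $\bang{\whynot{}}$/$\whynot{}\bang{}$ idempotences. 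The observation's phrase ``simplifications valid in $\CLLb$'' should thus be read inclusively, covering the $\ILL$-valid equivalences inherited by $\CLLb$ as well as the genuinely $\DNE$-dependent ones whose systematic exploitation is exactly what is deferred.
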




\bibliographystyle{plain}
\bibliography{dblogic}

\end{document}